\documentclass[12pt,reqno]{amsart}
\usepackage[margin=1in]{geometry}
\usepackage{amsmath,amssymb,amsthm,graphicx,amsxtra, setspace}
\usepackage[utf8]{inputenc}
\usepackage{mathrsfs}
\usepackage{hyperref}
\usepackage{upgreek}
\usepackage{mathtools}
\usepackage[dvipsnames]{xcolor}
\usepackage[mathcal]{euscript}
\allowdisplaybreaks

\DeclareMathOperator*{\esssup}{ess\,sup}
\DeclareMathOperator*{\essinf}{ess\,inf}

\usepackage[pagewise]{lineno}

\DeclareMathAlphabet{\mathpzc}{OT1}{pzc}{m}{it}

\usepackage[cyr]{aeguill}

\colorlet{darkblue}{blue!50!black}

\hypersetup{
	colorlinks,%
	citecolor=blue,%
	filecolor=red,%
	linkcolor=red,%
	urlcolor=blue,%
	pdfnewwindow=true,%
	pdfstartview={FitH}
}

\newtheorem{theorem}{Theorem}[section]
\newtheorem{lemma}[theorem]{Lemma}
\newtheorem{proposition}[theorem]{Proposition}

\newtheorem{definition}[theorem]{Definition}
\newtheorem{problem}[theorem]{Problem}
\newtheorem{example}[theorem]{Example}
\newtheorem{remark}[theorem]{Remark}

\newtheorem{hypothesis}[theorem]{Hypothesis}

\allowdisplaybreaks

\let\originalleft\left
\let\originalright\right
\renewcommand{\left}{\mathopen{}\mathclose\bgroup\originalleft}
\renewcommand{\right}{\aftergroup\egroup\originalright}

\renewcommand{\d}{\/\mathrm{d}\/}

\def\w{\textbf{W}^{\varepsilon}_{{\theta}^{\varepsilon}}}

\def\L{\mathbb{L}}
\def\A{\mathscr{A}}
\def\B{\mathscr{B}}
\def\C{\mathscr{C}}

\def\f{\boldsymbol{f}}

\def\q{\mathfrak{q}}

\def\E{\mathbb{E}}
\def\X{\mathbb{X}}

\def\g{\boldsymbol{g}}
\def\p{\mathfrak{p}}

\def\z{\boldsymbol{z}}
\def\v{\mathfrak{v}}
\def\w{\boldsymbol{w}}

\def\N{\mathbb{N}}
\def\r{\mathfrak{r}}

\def\V{\mathscr{V}}
\def\wi{\widetilde}

\def\u{\boldsymbol{y}}
\def\H{\mathscr{H}}
\def\n{\boldsymbol{n}}

\newcommand{\eps}{\varepsilon}

\newcommand{\R}{\mathbb{R}}

\renewcommand{\d}{\/\mathrm{d}\/}

\newcommand{\Addresses}{{% additional braces for segregating \footnotesize
		\footnote{
			\noindent \textsuperscript{1,2,3}Department of Mathematics, Indian Institute of Technology Roorkee-IIT Roorkee,
			Haridwar Highway, Roorkee, Uttarakhand 247667, INDIA.\par\nopagebreak
			\noindent  \textit{e-mail:} \texttt{Manil T. Mohan: maniltmohan@ma.iitr.ac.in, maniltmohan@gmail.com.}
			
			\textit{e-mail:} \texttt{Jyoti Jindal: jyoti@ma.iitr.ac.in.}
			
			\textit{e-mail:} \texttt{Sagar Gautam: sagar\_g@ma.iitr.ac.in.}
			
			\noindent \textsuperscript{*}Corresponding author.
			
			\textit{Key words:} convective Brinkman-Forchheimer extdened Darcy equations, Hemivariational inequality, Galerkin method, monotoncity.
			
			Mathematics Subject Classification (2020): Primary 47J20,  49J52, 35R70; Secondary  35Q35, 76D03.
		}}}

\begin{document}
	%	\linenumbers
	
	\title[Hemivariational inequality for 2D and 3D CBFeD equations]{A domain hemivariational inequality for 2D and 3D convective Brinkman-Forchheimer extended Darcy equations
		\Addresses}
	\author[J. Jindal, S. Gautam and M. T. Mohan]
	{Jyoti Jindal\textsuperscript{1}, Sagar Gautam\textsuperscript{2}, and Manil T. Mohan\textsuperscript{3*}}
	
	\maketitle

		\begin{abstract}
This paper investigates domain hemivariational inequality problems arising from the non-stationary two- and three-dimensional convective Brinkman-Forchheimer extended Darcy (CBFeD) equations, which describe the flow of viscous incompressible fluids through saturated porous media in bounded domains. These equations may be regarded as generalized Navier-Stokes systems incorporating both damping and pumping mechanisms. For all admissible absorption exponents $r \ge 1 $ and effective viscosity $\mu > 0 $, the existence of weak solutions to the non-stationary 2D and 3D CBFeD equations with hemivariational inequalities is established via a regularized Galerkin approximation scheme, based on a suitable regularization of the Clarke subdifferential. A noteworthy aspect of the analysis is that the existence results extend to the three-dimensional non-stationary Navier-Stokes equations. Moreover, under appropriate conditions on the absorption exponent, specifically, $r \ge 1 $ in two dimensions and $ r \ge 3 $ in three dimensions, it is shown that weak solutions satisfy the energy equality. In addition, uniqueness of solutions is proved for $ r \ge 1$ in 2D and $r \ge 3$ in 3D, with the additional requirement $2\beta \mu > 1 $ in the critical case $r = 3 $.
\end{abstract}
	
\section{Introduction}\setcounter{equation}{0}
	\subsection{The model} 
In this manuscript, we study the 2D and 3D CBFeD equations subject to a domain hemivariational inequality. We consider the following initial-boundary value problem for the 2D and 3D CBFeD system on bounded domain $\mathfrak{D} \subset \mathbb{R}^d$ for $d\in\{2,3\}$, which describes the steady flow of an incompressible viscous fluid through a porous medium (see \cite{MTT}):
\begin{equation}\label{eqn-dom}
	\left\{
	\begin{aligned}
		\frac{\partial \u}{\partial t}-\mu \Delta\u+(\u\cdot\nabla)\u+\alpha|\u|^{q-1}\u+\beta|\u|^{r-1}\u+\nabla \pi&=\f+\boldsymbol{g}, \ \text{ in } \ \mathfrak{D}\times (0,T), \\ \nabla\cdot\u&=0, \ \text{ in } \ \mathfrak{D}\times[0,T), \\
		\u&=\mathbf{0}\ \text{ on } \ \partial\mathfrak{D}\times[0,T], \\
		\u(0)&=\u^0 \ \text{ in } \ \mathfrak{D},
	\end{aligned}
	\right.
\end{equation}
where $\u(\cdot,\cdot) : \mathfrak{D} \times [0,T] \to \mathbb{R}^d$ denotes the flow velocity field, $\pi(\cdot,\cdot) : \mathfrak{D} \times [0,T] \to \mathbb{R}$ is the pressure and $\f(\cdot,\cdot) : \mathfrak{D} \times [0,T] \to \mathbb{R}^d$ is given forcing term. In addition, we are assuming
that the forcing term $\g(\cdot,\cdot):\mathfrak{D} \times [0,T]\to\mathbb{R}^d$  satisfies
\begin{align}\label{eqn-sub-diff}
	-\boldsymbol{g}(x,t)\in \partial_{\mathpzc{C}} j(x,t,\u(x, t))\ 
	\text{ in }\ \mathfrak{D}\times(0,T),
\end{align}
where $\partial_{\mathpzc{C}} j$ is the subdifferential of the superpotential 
$j:\mathfrak{D}\times(0,T)\times\mathbb{R}^d\to\mathbb{R}$ which is defined in the sense of Clarke (see Definition \ref{cLarkedef}). 
In the above model \eqref{eqn-dom}, the constants $\mu, \beta>0$ denotes the \emph{Brinkman coefficient} (effective viscosity) and  \emph{Forchheimer coefficient} (proportional to the porosity of the material), respectively. In addition, the term $\beta|\u|^{r-1}\u$ is known as the absorption or damping term with absorption exponent $r\geq1$ (cf. \cite{SNAHB}). In particular, $r=3$ is known as the \emph{critical exponent}. The cases $r<3$ and $r>3$ are known as \emph{subcritical} and \emph{supercritical}, respectively. 

As mentioned in \cite{MTT}, the additional nonlinear term $\alpha |\u|^{q-1}\u$ introduced by the authors in the system \eqref{eqn-dom} which represents a potential pumping mechanism that counteracts the damping effect when $\alpha < 0$, as assumed throughout this work. The parameter $ q \in [1, r)$ governs the strength of the pumping effect. For $\alpha=\beta=0$, the system \eqref{eqn-dom} reduces to the classical Navier-Stokes equations (abbreviated as NSE), and if $\alpha, \beta>0$, then it can be considered as damped NSE. The above model \eqref{eqn-dom} is accurate when the flow velocity is too large for Darcy's law to be valid as long as the porosity is not too small (see \cite{MTT}). For physical significance of the model, the interested readers are referred to see \cite{SGKKMTM,MTT}. 
\subsection{Literature survey}
In the literature, convective Brinkman-Forchheimer equations (abbreviated as CBF) are also known as tamed NSE or NSE modified with an absorption term, cf. \cite{SNAHB,MRXZ} etc., and references therein. The damping $\beta|\u|^{r-1}\u$ arises from the resistance to the motion of the flow, which describes several physical phenomena such as drag or friction effects, porous media flow, some dissipative mechanisms, cf. \cite{ZCQJ,KWH,MTT,ZZXW} etc., and references therein. The global solvability results for the CBF model involving fast growing nonlinearities in both two- and three-dimensions can be found in \cite{SNAHB,SGMTM,KWH,KT2,MTT}.

We are naturally led to a mathematical model that involves the Clarke subdifferential of a locally Lipschitz superpotential. This formulation, known as a hemivariational inequality, was first introduced and studied in the early 1980s by Panagiotopoulos \cite{PDP, PDPHI}. Hemivariational inequalities serve as natural extensions of variational inequality problems and originate from non-convex and non-smooth mechanics and one can see \cite{WH1} for more information regarding variational-hemivariational inequalities.  In \cite{WHYYSZ}, the authors analyzed a time-dependent mixed hemivariational inequality arising from incompressible fluid flow governed by the Stokes system. The model incorporates a nonsmooth friction-type boundary condition formulated using the Clarke subdifferential.  The authors in \cite{Fang2016, PKa} studied the existence of solutions of the hemivariational inequalities for the NSE, through a unified framework of an abstract problem and explores the
uniqueness, and continuous dependence on the data. In addition, they applied the results to the nonstationary
hemivariational inequalities for the NSE that are of the boundary type, corresponding to nonlinear slip boundary conditions, and of the domain type, corresponding to hydraulic flow controls.   As compared to boundary type inequalities, a domain hemivariational inequality is a mathematical problem in which the non-smooth and non-convex superpotential influences the system throughout the interior of the spatial region
$\mathfrak{D}$, instead of being restricted only to the boundary.
The authors in \cite{Fang2016, PKa} studied the existence  of solution through constructing a temporally semi-discrete approximation whose solutions converge to a solution of the Navier-Stokes hemivariational inequalities (NSHVIs). This approach has been successfully applied, for instance, in the analysis of the time-dependent hemivariational inequalities with CBF equations that are of the boundary type, where the authors of \cite{JSMT} proved both the existence and uniqueness of weak solutions.

\subsection{Comparison with the existing literature}
Most studies on hemivariational inequalities (HVIs) establish the existence of solutions either through fixed-point techniques or by employing Rothe’s method. An alternative and widely used approach is the Galerkin method combined with a suitable regularization procedure, as applied in \cite{SMAON} to study boundary type HVIs arising in the NSE. In the Galerkin discretization scheme, the infinite-dimensional operator equation is replaced with a finite-dimensional one, whose well-posedness can be shown via classical existence and uniqueness results or fixed-point arguments. After obtaining uniform a-priori estimates for the approximate solutions, the Banach-Alaoglu theorem is used to extract a weakly convergent subsequence. By applying some compactness results like Aubin-Lions, one can extract a further subsequence which converges strongly, so that the weak limit serves as a solution to the original problem (cf. \cite{Te}). For further details on the application of the Galerkin method to parabolic and elliptic problems in the framework of variational inequalities, we refer the reader to \cite{TRo}. The authors in \cite[Section 7.3, Chapter 7]{DGDM} examined the existence of weak solutions for domain-type hemivariational inequalities associated with parabolic systems in bounded domains. Furthermore, Mig\'orski-Ochal in \cite{SMAO} obtained an existence result for weak solutions of systems governed by a parabolic hemivariational inequality of boundary type via Galerkin method combined with a regularization procedure (see also \cite{SMAON}). 

In 2016, Mig\'orski et al. \cite{Fang2016} studied a new class of hemivariational inequalities for time-dependent NSE, addressing both domain and boundary HVIs by employing Rothe's method for semi-discretization in time, and surjectivity result for multivalued pseudomonotone operators. Furthermore, using this methodology, Jindal et al. \cite{JSMT} analyzed the well-posedness of boundary hemivariational inequalities for non-stationary two and three-dimensional CBF equations. Recently, the authors in \cite{MHHQLM} investigated the well-posedness of the Stokes hemivariational inequality for incompressible fluid flows with damping in three dimensions, under the restriction $1\leq r\leq 5$, by reformulating the problem in a minimization framework. Moreover, using similar ideas to \cite{MHHQLM}, the author of \cite{MTM} established the well-posedness of stationary two- and three-dimensional CBFeD hemivariational inequalities. Also, the authors in \cite{WWXC} investigated the well-posedness of the hemivariational inequality associated with the stationary NSE including a nonlinear damping term via finite element method. Further, the authors in \cite{WAMTM} studied the hemivariational inequality of the stationary CBFeD model via mixed finite element method and also discussed optimal control of stationary CBFeD with hemivariational inequalities in \cite{WAMTM1}.

Despite these contributions, the theoretical analysis of CBF equations coupled with domain hemivariational inequalities for viscous incompressible flows remains largely undeveloped. To address this gap, the present work introduces and investigates the existence and uniqueness of weak solutions for NSHVIs with damping and pumping effects, thereby providing new insights into the mathematical theory of such fluid flow models. To the best of our knowledge, domain-type hemivariational inequalities in this context have not been investigated so far. Motivated by this observation, our work extends the existing theory by employing the Galerkin method to analyze time-dependent hemivariational inequalities for the CBFeD equations \eqref{eqn-dom}, which are of domain type.
\subsection{Difficulties, approaches and novelties}
\begin{itemize}
\item The authors in \cite{Fang2016} established the existence of a weak solution by using Rothe's method for the 2D NSE with domain-type hemivariational inequalities under the additional condition $\mu > \sqrt{2}c_0 \max\{\sqrt{|\mathfrak{D}|},1\}\|l\|^2,$ where $ |\mathfrak{D}|$ is the
measure of $\mathfrak{D}$, $l:\V\to {\L}^2(\Gamma)$ is the trace operator and $c_0 >0$. In contrast, the approach adopted in the present work avoids imposing such a restriction on the viscosity parameter $\mu$. We establish the existence of a weak solution by employing analytical techniques similar to those developed in \cite{DGDM,SMAO} without getting any restriction on $\mu$. The main difficulty in our analysis lies in handling the term
 \begin{align*}
 	 \int_0^t\sum_{i=1}^{d}\int_{\mathfrak{D}} \theta_{in}(s, \u_{n,i}(s)) \u_{n,i}(s)\d x \d s 
 	  \end{align*}
appearing in the inequality \eqref{415}. In the earlier works such as \cite{Fang2016}, the a-priori estimates are obtained by using the bound \eqref{reg-est-3}, which leads to the additional restriction on the viscosity coefficient $\mu$. In the present work, however, we overcome this difficulty by applying Lemma \ref{lemma-1} to derive the necessary a-priori estimates, as demonstrated in \textbf{Step II} of Theorem \ref{thm 4.3}. 

% Then, by using the Banach-Alaoglu theorem to extract weak convergent subsequence and some compactness results, one can extract a further subsequence which converges strongly, so that the weak limit serves as a solution to the original problem .

\item In \cite{SNAHB}, the authors studied the NSE modified by the absorption term $|\u|^{r-2}\u$, for $r > 2$ in bounded domains with compact boundary. They proved the existence of Leray-Hopf weak solutions for dimension $d \geq 2$ and its uniqueness for $d = 2$. However, in three dimension, they were unable to establish the energy equality satisfied by the weak solutions.

Recently, the authors 
%in \cite{FHR} constructed functions that can approximate functions defined on smooth bounded domains by elements of eigenspaces of linear operators (such as the Laplacian or Stokes operator) in such
%a way that the approximations are bounded and converge in both Sobolev and Lebesgue spaces
%simultaneously. As an application, they showed that every weak solution of the critical CBFe ($r = 3$) on a bounded domain in $\R^3$ satisfies the energy equality. Furthermore, 
in \cite{SGMTM} established the existence and uniqueness of global weak solutions (without non-linear damping) satisfying the energy equality to the system \eqref{eqn-dom} for all  $\beta,\mu> 0$, when 
$ r > 3$ and for $2\beta \mu \geq 1 $, when $r = 3$. For more information, one can see \cite{FHR}. Thus, the novelty of our work is to prove the energy equality by approximating the solution via finite-dimensional spaces spanned by the first $n$ eigen functions (see Proposition \ref{prop-energy}) for $r\geq1$ in 2D and $r\geq 3$ in 3D without any additional assumption $2\beta \mu \geq 1 $, when $r = 3$.

\item  Leray and Hopf \cite{JL} established the existence of at least one weak solution for the three-dimensional NSE. However, the global existence and uniqueness of classical solutions to the 3D NSE (see \cite{LSM}) remains one of the most challenging open problems for the mathematical community. Consequently, several modified versions of the 3D NSE have been proposed and studied (cf. \cite{SNAHB, ZZXW}). In \cite{MTT}, the authors considered the classical 3D NSE with nonlinear damping $\beta|\u|^{r-1}\u$ and pumping $\alpha|\u|^{q-1}\u$ (without linear damping) and established the existence of weak solutions for $r>q$ and uniqueness for $r>3$. 

%Furthermore, the authors in \cite{SMAON} investigated the NSE with boundary-type hemivariational inequalities using the Galerkin method, but the uniqueness of solutions was not addressed. 
In \cite{Fang2016}, the authors established the uniqueness of weak solutions under the additional restriction $\mu > m\|l\|^2$, where $m$ is the constant appearing in the monotonicity condition (see \cite[Lemma 6.2]{Fang2016}). The novelty of the present work lies in establishing the uniqueness of weak solutions to problem \eqref{eqn-dom} together with condition \eqref{theta-Cond}, for $r \ge 1$ in the two-dimensional case and for $r \ge 3$ in the three-dimensional case. In particular, when $r=3$, the uniqueness result is obtained under the additional assumption $2\beta\mu > 1$. Unlike \cite{Fang2016}, our analysis does not impose any restriction on the viscosity coefficient $\mu$. To the best of our knowledge, such a result has not been addressed in the existing literature.
\end{itemize}
\subsection{Organization of the paper}
The rest of the paper is organized as follows. In the next section, we present the essential mathematical preliminaries and set up the functional framework. Since the framework used here for the CBF equations differs from that employed for NSE (as detailed in \cite{Te}), we provide an in-depth discussion of the function spaces involved. We then proceed with a thorough analysis of the linear, bilinear, and nonlinear operators that appear in our formulation and dedicated to exploring key properties of these operators.

Section \ref{sec3} focuses on proving the existence and uniqueness of a weak solution to the domain hemivariational inequality associated with 2D and 3D CBFeD equations. Using  Galerkin discretization scheme, we first establish the existence of a solution to an abstract hemivariational inequality, as shown in Theorem \ref{thm 4.3}. Then, we prove the energy equality stated in Proposition \ref{prop-energy} for $r\geq1$ in 2D and $r\geq 3$ in 3D and the corresponding uniqueness results are presented in Theorem \ref{lemma4.6}.

\section{Mathematical Formulation}\setcounter{equation}{0}
Let us first introduce the functional framework needed to establish global solvability of system \eqref{eqn-dom}, and then define the associated operators with their fundamental properties.

\subsection{Function spaces} Let $\mathrm{C}_0^{\infty}(\mathfrak{D};\R^d)$ be the space of all infinitely differentiable $\R^d$-valued functions having compact support contained in $\mathfrak{D}\subset\R^d$.  Let us define 
\begin{align*} 
	\mathcal{V}&:=\{\u\in \mathrm{C}_0^{\infty}(\mathfrak{D},\R^d):\nabla\cdot\u=0\},\\
	\H&:= \overline{\mathcal{V}}^{\|\cdot\|_{\L^2(\mathfrak{D})}}, \quad \V:= \overline{\mathcal{V}}^{\|\cdot\|_{\mathbb{H}_0^1(\mathfrak{D})}} \text{ and } \widetilde{\L}^{p}:= \overline{\mathcal{V}}^{\|\cdot\|_{\L^p(\mathfrak{D})}} \text{ for } p\in(2,\infty),
\end{align*} 
where $\overline{\mathcal{V}}$ denotes the closure of $ \mathcal{V} $ in their respective norms.
The spaces $\H$, $\V$ and $\widetilde{\L}^p$ can be  characterize as 
\begin{align*}
	\H&:=\{\u\in\L^2(\mathfrak{D}):\nabla\cdot\u=0,\u\cdot\n\big|_{\partial\mathfrak{D}}=0\},  \\
	\V&:=\{\u\in\mathbb{H}_0^1(\mathfrak{D}):\nabla\cdot\u=0\}, \\
	\widetilde{\L}^p&:=\{\u\in\L^p(\mathfrak{D}):\nabla\cdot\u=0, \u\cdot\n\big|_{\partial\mathfrak{D}}=0\}, 
\end{align*}
where $\n$ is the outward normal to $\partial\mathfrak{D}$ and $\u\cdot\n\big|_{\partial\mathfrak{D}}$ is understood in the sense of trace (\cite[Lemma 1.3, Chapter 1]{Te}). We denote the norms in $\H, \V  \text{ and } \widetilde{\L}^p$ by
$$\|\u\|_{\H}^2:=\int_{\mathfrak{D}}|\u(x)|^2\d x, \ \|\u\|_{\V}^2:=\int_{\mathfrak{D}}|\nabla\u(x)|^2\d x\  \text{ and } \
\|\u\|_{\widetilde{\L}^p}^p=\int_{\mathfrak{D}}|\u(x)|^p\d x ,$$ respectively.
Let $(\cdot,\cdot)$ denote the inner product in the Hilbert space $\H$ and $\langle \cdot,\cdot\rangle $ represent  the induced duality between the spaces $\V^{\prime}$  and its dual $\V$ as well as $\widetilde{\L}^{p^{\prime}}$ and its dual $\widetilde{\L}^{p}$, where $\frac{1}{p}+\frac{1}{p^{\prime}}=1$. 
%From \cite[Subsection 2.1]{FKS}, the sum space $\V^{\prime}+\widetilde{\L}^{p^{\prime}}$ is well-defined and  is a Banach space with respect to the norm 
%\begin{align*}
%	\|\u\|_{\V^{\prime}+\widetilde{\L}^{p^{\prime}}}&:=\inf\{\|\u_1\|_{\V^{\prime}}+\|\u_2\|_{\wi\L^{p^{\prime}}}:\u=\u_1+\u_2, \u_1\in\V^{\prime} \ \text{and} \ \u_2\in\wi\L^{p^{\prime}}\}\nonumber\\&=
%	\sup\left\{\frac{|\langle\u_1+\u_2,\boldsymbol{f}\rangle|}{\|\boldsymbol{f}\|_{\V\cap\widetilde{\L}^p}}:\boldsymbol{0}\neq\boldsymbol{f}\in\V\cap\widetilde{\L}^p\right\},
%\end{align*}
%where $\|\cdot\|_{\V\cap\widetilde{\L}^p}:=\max\{\|\cdot\|_{\V}, \|\cdot\|_{\wi\L^p}\}$ is a norm on the Banach space $\V\cap\widetilde{\L}^p$. Also the norm $\max\{\|\u\|_{\V}, \|\u\|_{\wi\L^p}\}$ is equivalent to the norms  $\|\u\|_{\V}+\|\u\|_{\widetilde{\L}^{p}}$ and $\sqrt{\|\u\|_{\V}^2+\|\u\|_{\widetilde{\L}^{p}}^2}$ on the space $\V\cap\widetilde{\L}^p$. 
Moreover, we have the following continuous embeddings $$\V\cap\widetilde{\L}^p\hookrightarrow\V\hookrightarrow\H\cong\H'\hookrightarrow\V^{\prime}\hookrightarrow\V^{\prime}+\widetilde{\L}^{p^{\prime}},$$ where the embedding $\V\hookrightarrow\H$ is compact. 

\subsection{Linear operator}
Let us define a bilinear form $\mathtt{a}(\cdot,\cdot): \V \times \V\to\R$ by $$\mathtt{a}(\p, \q) := (\nabla \p, \nabla \q),\  \text{ for }\ \p, \q \in\V.$$
% From the definition of $\mathtt{a}(\cdot,\cdot)$, it satisfies
%\begin{align}\label{eqn-abound}
%	|\mathtt{a}(\p, \q)|\leq \|\p\|_{\V}\|\q\|_{\V},\ \text{ for all }\ \p, \q \in\V.
%\end{align} 
In view of \emph{the Riesz representation theorem}, there exists a unique linear operator $\A:\V\to \V^{\prime}$ such that
\begin{align*}
	\mathtt{a}(\p, \q) = \langle\A\p, \q\rangle, \ \text{ for all }\ \p, \q \in\V.
\end{align*}
%Moreover, it satisfies
%\begin{align}\label{eqn-coercive}
%	\mathtt{a}(\p, \p)=\|\nabla \p\|_{\H}^2=\|\p\|_{\V}^2\  \text{ for all }\ \p \in\V.
%\end{align}
%Therefore, by means of the \emph{Lax-Milgram theorem} (see \eqref{eqn-abound} and \eqref{eqn-coercive}), the operator $\A : \V\to\V^{\prime}$ is an isomorphism.
%   
\subsection{Bilinear operator}
Let us define the \emph{trilinear form} $\mathtt{b}(\cdot,\cdot,\cdot):\V\times\V\times\V\to\R$ by $$\mathtt{b}(\p,\q,\r)=\int_{\mathfrak{D}}(\p(x)\cdot\nabla)\q(x)\cdot\r(x)\d x=\sum_{i,j=1}^d\int_{\mathfrak{D}}\p_i(x)\frac{\partial \q_j(x)}{\partial x_i}\r_j(x)\d x.$$ 
If $\p, \q$ are such that the linear map $\mathtt{b}(\p, \q, \cdot) $ is continuous on $\V$, the corresponding element of $\V^{\prime}$ is denoted by $\B(\p, \q)$. We represent $\B(\p) = \B(\p, \p)=(\p \cdot\nabla)\p$.
An integration by parts gives 
\begin{equation*}
	\left\{
	\begin{aligned}
		\mathtt{b}(\p,\q,\q) &= 0 \ \text{ for all }\ \p,\q \in\V,\\
		\mathtt{b}(\p,\q,\r) &=  -\mathtt{b}(\p,\r,\q)\ \text{ for all }\ \p,\q,\r\in \V.
	\end{aligned}
	\right.
\end{equation*}
%In the trilinear form, an application of H\"older's inequality yields
%\begin{align*}
%		|\mathtt{b}(\u,\v,\w)|=|\mathtt{b}(\u,\w,\v)|\leq \|\u\|_{\widetilde{\L}^{r+1}}\|\v\|_{\widetilde{\L}^{\frac{2(r+1)}{r-1}}}\|\w\|_{\V},
%\end{align*}
%for all $\u\in\V\cap\widetilde{\L}^{r+1}$, $\v\in\V\cap\widetilde{\L}^{\frac{2(r+1)}{r-1}}$ and $\w\in\V$.
%so that we get 
%\begin{align}\label{2p9}
%	\|\B(\u,\v)\|_{\V'}\leq \|\u\|_{\widetilde{\L}^{r+1}}\|\v\|_{\widetilde{\L}^{\frac{2(r+1)}{r-1}}}.
%\end{align}
\begin{lemma}[\cite{MTMS}]
The trilinear map $\mathtt{b} : \V\times\V\times\V \to \R$ has a unique extension from $(\V\cap\widetilde{\L}^{r+1})\times(\V\cap\widetilde{\L}^{\frac{2(r+1)}{r-1}})\times\V$ to $\R$ which is a bounded trilinear map and $\B :\V\cap\widetilde{\L}^{r+1} \to \V^{\prime}+\widetilde{\L}^{\frac{r+1}{r}}$ satisfies the following properties:
\end{lemma} 
%By using H\"older's and interpolation inequalities, 
\begin{itemize} 
\item[(i)] We estimate $\left|\langle \B(\p,\p),\q \rangle \right|$ as
\begin{align}\label{bound-B}
\left|\langle \B(\p,\p),\q\rangle \right|=\left|\mathtt{b}(\p,\p,\q)\right|\leq \|\p\|_{\widetilde{\L}^{r+1}}\|\p\|_{\widetilde{\L}^{\frac{2(r+1)}{r-1}}}\|\q\|_{\V}\leq\|\p\|_{\widetilde{\L}^{r+1}}^{\frac{r+1}{r-1}}\|\p\|_{\H}^{\frac{r-3}{r-1}}\|\q\|_{\V},
\end{align}
for all $\q\in\V\cap\widetilde{\L}^{r+1}$.
\item[(ii)] For $r \geq 3$, the operator $\B :\V\cap\widetilde{\L}^{r+1} \to \V^{\prime}+\widetilde{\L}^{\frac{r+1}{r}}$ is a locally Lipschitz, that is, 
\begin{align}\label{lip}
	\|\B(\p)-\B(\q)\|_{\V^{\prime}+\widetilde{\L}^{\frac{r+1}{r}}}&\leq \left(\|\p\|_{\H}^{\frac{r-3}{r-1}}\|\p\|_{\widetilde{\L}^{r+1}}^{\frac{2}{r-1}}+\|\q\|_{\H}^{\frac{r-3}{r-1}}\|\q\|_{\widetilde{\L}^{r+1}}^{\frac{2}{r-1}}\right)\|\p-\q\|_{\widetilde{\L}^{r+1}}.
\end{align}
\end{itemize}

\subsection{Nonlinear operator}
Let us now define an operator $$\C(\p):=|\p|^{r-1}\p\ \text{ for }\ \p\in\wi\L^{r+1}.$$ 
%For convenience of notation, we use $\C$ for $\C_r$ in the rest of the paper.  
It follows immediately that
\begin{align*}
\langle\C(\p),\p\rangle =\|\p\|_{\widetilde{\L}^{r+1}}^{r+1}.
\end{align*}
Moreover, we define $\wi{\C}(\p)=|\p|^{q-1}\p$ for $1\leq q<r<\infty$. The nonlinear operator $\wi{\C}(\cdot)$ satisfies  similar properties as $\C(\cdot)$.

\begin{lemma}[\cite{MTMS}] The nonlinear operator $\C(\cdot):\widetilde{\L}^{r+1}\to\widetilde{\L}^{\frac{r+1}{r}}$ is locally Lipschitz, that is, 
	%	By making the use of \eqref{MVT} and H\"older's inequality, we finally calculate
	\begin{align}\label{213}
		\langle |\p|^{r-1}\p-|\q|^{r-1}\q,\p-\q\rangle
		&\leq r\left(\|\p\|_{\widetilde{\L}^{r+1}}+\|\q\|_{\widetilde{\L}^{r+1}}\right)^{r-1}\|\p-\q\|_{\widetilde{\L}^{r+1}}^2,
	\end{align}
	for all $\p,\q\in\widetilde{\L}^{r+1}$.
	%Thus the operator $\C(\cdot):\widetilde{\L}^{r+1}\to\widetilde{\L}^{\frac{r+1}{r}}$ is locally Lipschitz.
\end{lemma}

\begin{lemma}[{\cite[Section 2.4]{MTMS}}]\label{Cmono1}
	For all $\p,\q\in \wi\L^{r+1}$ and $r\geq 1$, we have 
	\begin{align*}
		&\langle\p|\p|^{r-1}-\q|\q|^{r-1},\p-\q\rangle\geq \frac{1}{2}\||\p|^{\frac{r-1}{2}}(\p-\q)\|_{\H}^2
		+\frac{1}{2}\||\q|^{\frac{r-1}{2}}(\p-\q)\|_{\H}^2\geq 0.
	\end{align*}
\end{lemma}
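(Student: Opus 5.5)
The plan is to reduce the inequality to a pointwise identity in $\R^d$ and then integrate over $\mathfrak{D}$. For vectors $a,b\in\R^d$ I will show
\begin{align*}
(|a|^{r-1}a-|b|^{r-1}b)\cdot(a-b)\geq \frac12|a|^{r-1}|a-b|^2+\frac12|b|^{r-1}|a-b|^2 .
\end{align*}
Applying this with $a=\p(x)$ and $b=\q(x)$ and integrating over $\mathfrak{D}$ gives the first inequality of Lemma \ref{Cmono1}, because $\||\p|^{\frac{r-1}{2}}(\p-\q)\|_{\H}^2=\int_{\mathfrak{D}}|\p|^{r-1}|\p-\q|^2\d x$. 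Both sides are finite for $\p,\q\in\wi\L^{r+1}$ by H\"older's inequality with exponents $\frac{r+1}{r-1}$ and $\frac{r+1}{2}$. The second inequality, nonnegativity, is then immediate.

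For the pointwise claim I will use an algebraic identity. Expanding directly,
\begin{align*}
(|a|^{r-1}a-|b|^{r-1}b)\cdot(a-b)=|a|^{r+1}+|b|^{r+1}-(|a|^{r-1}+|b|^{r-1})\,a\cdot b .
\end{align*}
Since $a\cdot b=\frac12(|a|^2+|b|^2-|a-b|^2)$, the right-hand side equals
\begin{align*}
\frac12(|a|^{r-1}+|b|^{r-1})|a-b|^2+|a|^{r+1}+|b|^{r+1}-\frac12(|a|^{r-1}+|b|^{r-1})(|a|^2+|b|^2).
\end{align*}
The remaining terms simplify to
\begin{align*}
\frac12\left(|a|^{r+1}+|b|^{r+1}-|a|^{r-1}|b|^2-|b|^{r-1}|a|^2\right)=\frac12(|a|^{r-1}-|b|^{r-1})(|a|^2-|b|^2).
\end{align*}
This last quantity is nonnegative for $r\geq1$, because $s\mapsto s^{r-1}$ and $s\mapsto s^2$ are both nondecreasing on $[0,\infty)$. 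When $r=1$ it vanishes, which is consistent with the identity being exact in that case.

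There is no real obstacle here. The only step needing care is the bookkeeping in the algebraic identity and checking the sign of the remainder term $(|a|^{r-1}-|b|^{r-1})(|a|^2-|b|^2)$, which holds because both factors have the same sign. Measurability and integrability are routine, since each term is bounded by a constant times $|\p|^{r+1}+|\q|^{r+1}$.
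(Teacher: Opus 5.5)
Your proof is correct and follows the same route as the source the paper cites: the paper gives no proof of its own and refers to \cite[Section 2.4]{MTMS}, which splits off $\||\p|^{\frac{r-1}{2}}(\p-\q)\|_{\H}^2+\||\q|^{\frac{r-1}{2}}(\p-\q)\|_{\H}^2$, rewrites the cross terms using $\p\cdot\q=\frac12(|\p|^2+|\q|^2-|\p-\q|^2)$, and drops the nonnegative remainder $\frac12\int_{\mathfrak{D}}(|\p|^{r-1}-|\q|^{r-1})(|\p|^2-|\q|^2)\d x$. Your direct pointwise expansion is the same identity with slightly more compact bookkeeping, and your integrability check via H\"older is also correct.
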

 
\subsection{Clarke subdifferential}
In this subsection, we introduce the Clarke subdifferential and provide its characterization in terms of generalized directional derivative, as outlined below:

\begin{definition}[{\cite[Definition 3.20]{SMAOMS}}]
Let $\mathcal{E}$ be a normed space. A function $\mathcal{G}:\mathcal{E}\to\mathbb{R}$ is called locally Lipschitz, if for every $\mathpzc{x}\in\mathcal{E}$, there exists a neighborhood $N$ of $\mathpzc{x}$ and a constant $L_N$ such that $$|\mathcal{G}(\mathpzc{y})-\mathcal{G}(\mathpzc{z})|\leq L_N\|\mathpzc{y}-\mathpzc{z}\|_{\mathcal{E}}\ \text{ for all }\ \mathpzc{y}, \mathpzc{z} \in N.$$ 
\end{definition}

In analogous with the Clarke generalized directional derivative and generalized gradient for locally Lipschitz functions, we recall the following definitions:
\begin{definition}[{\cite{SMAOMS}}]\label{cLarkedef}
	Let $\mathcal{G}(\cdot, \cdot) :(0,T) \times\mathcal{E} \to\R$ be a function such that $\mathcal{G}(\cdot, \mathpzc{x})$ is measurable on $(0,T) $ for all $\mathpzc{x} \in \mathcal{E}$ and $\mathcal{G}(t, \cdot)$ is locally Lipschitz  for all $t \in (0,T)$. The generalized directional derivative of $\mathcal{G}$ at a point $\mathpzc{x}\in\mathcal{E}$ in the direction $\mathpzc{z}\in\mathcal{E}$, denoted by $\mathcal{G}^0(t, \mathpzc{x};\mathpzc{z})$, is defined by
	\begin{align*}
		\mathcal{G}^0(t,\mathpzc{x};\mathpzc{z})=\lim_{\mathpzc{y}\to\mathpzc{x}}\sup_{\lambda\downarrow 0}\frac{\mathcal{G}(t,\mathpzc{y}+\lambda\mathpzc{z})-\mathcal{G}(t,\mathpzc{y})}{\lambda}. 
	\end{align*}
	The generalized gradient or Clarke subdifferential of $\mathcal{G}$ at $\mathpzc{x}$, denoted by $\partial_{\mathpzc{C}}\mathcal{G}(t, \mathpzc{x})$ for all $(0,T) \times\mathcal{E} $, is the subset of the dual space $\mathcal{E}^{\prime}$ given by
	\begin{align}\label{def-gra}
		\partial_{\mathpzc{C}} \mathcal{G}(t,\mathpzc{x})=\left\{\boldsymbol{\zeta}\in\mathcal{E}^{\prime}:\mathcal{G}^0(t,\mathpzc{x};\mathpzc{z})\geq {}_{\mathcal{E}^{\prime}}\langle\boldsymbol{\zeta},\mathpzc{z}\rangle_{\mathcal{E}}\ \text{ for all }\ \mathpzc{z}\in\mathcal{E} \right\}. 
	\end{align}
	A locally Lipschitz function $\mathcal{G}$ is said to be \emph{regular (in the sense of Clarke)} at a point $\mathpzc{x} \in \mathcal{E}$,  if for every direction $\mathpzc{z}\in\mathcal{E}$, the one-sided directional derivative $\mathcal{G}^{\prime}(t,\mathpzc{x};\mathpzc{z})$  exists and satisfies $$\mathcal{G}^0(t,\mathpzc{x};\mathpzc{z}) = \mathcal{G}^{\prime}(t,\mathpzc{x};\mathpzc{z}).$$
\end{definition}

\begin{proposition}[{\cite[Proposition 3.44]{SMAOMS}}]
	If  $\mathcal{G}(\cdot, \cdot) :(0,T) \times\mathcal{E} \to\R$ is a function such that $\mathcal{G}(\cdot, \mathpzc{x})$ is measurable on $(0,T) $ for all $x \in \mathcal{E}$ and $\mathcal{G}(t, \cdot)$ is locally Lipschitz for all $t \in (0,T)$, then 
	\begin{enumerate}
		\item  for every $(t,\mathpzc{x}) \in (0,T) \times\mathcal{E}, ~\partial_C \mathcal{G}(t, \mathpzc{x})$ is a nonempty, convex, weak$^*$-compact subset of $\mathcal{E}^{\prime}$ and satisfies $$\|\boldsymbol{\zeta}\|_{\mathcal{E}^{\prime}}\leq L_N \ \text{ for all }\  \boldsymbol{\zeta}\in\partial_C \mathcal{G}(t, \mathpzc{x}); \ \text{ and }$$
		\item we have 
		$$	\mathcal{G}^0(t,\mathpzc{x};\mathpzc{z})=\max\left\{\langle\boldsymbol{\zeta},\mathpzc{z}\rangle:\boldsymbol{\zeta}\in\partial_C \mathcal{G}(t,\mathpzc{x})\right\} \ \text{ for all }\  \mathpzc{z}\in\mathcal{E}.$$
	\end{enumerate}
\end{proposition}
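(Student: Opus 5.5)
We prove the two assertions in turn, starting from the definition of the generalized directional derivative.

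\textbf{Finiteness, sublinearity and the bound $L_N$.} Fix $t\in(0,T)$ and $\mathpzc{x}\in\mathcal{E}$. Let $N$ be a neighborhood of $\mathpzc{x}$ with Lipschitz constant $L_N$. For $\mathpzc{y}$ close to $\mathpzc{x}$ and $\lambda>0$ small, both $\mathpzc{y}$ and $\mathpzc{y}+\lambda\mathpzc{z}$ lie in $N$. The difference quotient is then bounded by $L_N\|\mathpzc{z}\|_{\mathcal{E}}$, so $|\mathcal{G}^0(t,\mathpzc{x};\mathpzc{z})|\le L_N\|\mathpzc{z}\|_{\mathcal{E}}$.

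Next we show that $\mathpzc{z}\mapsto\mathcal{G}^0(t,\mathpzc{x};\mathpzc{z})$ is sublinear.
\begin{itemize}
\item \emph{Positive homogeneity} follows by rescaling $\lambda$.
\item \emph{Subadditivity} comes from splitting
\[
\mathcal{G}(t,\mathpzc{y}+\lambda(\mathpzc{z}_1+\mathpzc{z}_2))-\mathcal{G}(t,\mathpzc{y})
=\big[\mathcal{G}(t,\mathpzc{y}'+\lambda\mathpzc{z}_2)-\mathcal{G}(t,\mathpzc{y}')\big]+\big[\mathcal{G}(t,\mathpzc{y}+\lambda\mathpzc{z}_1)-\mathcal{G}(t,\mathpzc{y})\big],
\]
where $\mathpzc{y}'=\mathpzc{y}+\lambda\mathpzc{z}_1\to\mathpzc{x}$. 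Taking the $\limsup$ gives $\mathcal{G}^0(t,\mathpzc{x};\mathpzc{z}_1+\mathpzc{z}_2)\le \mathcal{G}^0(t,\mathpzc{x};\mathpzc{z}_1)+\mathcal{G}^0(t,\mathpzc{x};\mathpzc{z}_2)$.
\end{itemize}
Since $\mathcal{G}^0(t,\mathpzc{x};\cdot)$ is finite and sublinear, it is Lipschitz on $\mathcal{E}$.

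\textbf{Properties of $\partial_{\mathpzc{C}}\mathcal{G}(t,\mathpzc{x})$.} From \eqref{def-gra}, $\partial_{\mathpzc{C}}\mathcal{G}(t,\mathpzc{x})$ is an intersection of weak$^*$-closed half-spaces, so it is convex and weak$^*$-closed. Each $\boldsymbol{\zeta}$ in it satisfies $\langle\boldsymbol{\zeta},\mathpzc{z}\rangle\le L_N\|\mathpzc{z}\|_{\mathcal{E}}$ for all $\mathpzc{z}$. Replacing $\mathpzc{z}$ by $-\mathpzc{z}$ gives $\|\boldsymbol{\zeta}\|_{\mathcal{E}'}\le L_N$. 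The set is therefore bounded and weak$^*$-closed, hence weak$^*$-compact by Banach--Alaoglu.

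\textbf{Nonemptiness and the max formula.} This is the key step. Fix $\mathpzc{z}_0\in\mathcal{E}$ and define the linear functional $\ell(s\mathpzc{z}_0)=s\,\mathcal{G}^0(t,\mathpzc{x};\mathpzc{z}_0)$ on $\operatorname{span}\{\mathpzc{z}_0\}$. We check that $\ell\le\mathcal{G}^0(t,\mathpzc{x};\cdot)$ on this line:
\begin{itemize}
\item for $s\ge0$ this is homogeneity;
\item for $s<0$ it follows from $0=\mathcal{G}^0(t,\mathpzc{x};0)\le \mathcal{G}^0(t,\mathpzc{x};\mathpzc{z}_0)+\mathcal{G}^0(t,\mathpzc{x};-\mathpzc{z}_0)$.
\end{itemize}
By the Hahn--Banach theorem, $\ell$ extends to a linear $\boldsymbol{\zeta}$ on $\mathcal{E}$ dominated by the sublinear functional $\mathcal{G}^0(t,\mathpzc{x};\cdot)$. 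The bound $\mathcal{G}^0(t,\mathpzc{x};\mathpzc{z})\le L_N\|\mathpzc{z}\|_{\mathcal{E}}$ shows that $\boldsymbol{\zeta}$ is continuous, so $\boldsymbol{\zeta}\in\mathcal{E}'$. It belongs to $\partial_{\mathpzc{C}}\mathcal{G}(t,\mathpzc{x})$, which is therefore nonempty.

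This $\boldsymbol{\zeta}$ attains $\langle\boldsymbol{\zeta},\mathpzc{z}_0\rangle=\mathcal{G}^0(t,\mathpzc{x};\mathpzc{z}_0)$. The reverse inequality $\langle\boldsymbol{\zeta},\mathpzc{z}_0\rangle\le\mathcal{G}^0(t,\mathpzc{x};\mathpzc{z}_0)$ holds for every element of the subdifferential by definition. Hence the supremum over $\partial_{\mathpzc{C}}\mathcal{G}(t,\mathpzc{x})$ is attained, it is a maximum, and (2) follows.

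The measurability in $t$ plays no role, since both assertions are pointwise in $t$. The only point requiring care is confirming that the Hahn--Banach extension is continuous, which the bound by $L_N$ ensures.
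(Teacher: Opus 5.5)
Your proof is correct. It is the standard argument for these properties of the Clarke subdifferential: the Lipschitz bound and sublinearity of $z\mapsto\mathcal{G}^0(t,x;z)$, Hahn--Banach for nonemptiness and attainment in the max formula, and Banach--Alaoglu for weak$^*$-compactness. The paper gives no proof of its own and simply quotes the result from \cite{SMAOMS}, where it is established along these lines.

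You also rightly read the paper's $\lim_{y\to x}\sup_{\lambda\downarrow 0}$ as the joint $\limsup$ in $(y,\lambda)$. Your subadditivity step with $y'=y+\lambda z_1$ needs that reading.
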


\subsection{Preliminaries} Let $\E$ be a Banach space and $T>0$. Let us $\mathcal{D}'(0,T;\E)$ denote the space of all distributions from $(0,T)$ to a Banach space $\E$.
%the Bochner space $\mathrm{L}^p(0,T;\E),$ $1\leq p<\infty$ is defined as: 
%\begin{align*}
%	\mathrm{L}^p(0,T;\E):=\left\{\u:(0,T)\to\E: \u\ \text{ is strongly measurable and }\ \int_0^T\|\u(t)\|_{\E}^p\d t<\infty \right\}.
%\end{align*}
%For $p=\infty$, we define the set 
%\begin{align*}
%	\mathrm{L}^{\infty}(0,T;\E):=\left\{\u:(0,T)\to\E: \u\ \text{ is strongly measurable and }\ \esssup_{t\in(0,T)}\|\u(t)\|_{\E}<\infty \right\}.
%\end{align*}
The following result is a generalization of the well-known Lions-Magenes Lemma \cite{JLEM}.  
\begin{theorem}[{\cite[Theorem 1.8]{VVMI}}]\label{Thm-Abs-cont}
	Let $\H$ be a Hilbert space identified with its dual $\H^\prime$, and $\V, \X, \E$ be Banach spaces and satisfies the  following inclusions
	\[\V \hookrightarrow \H \hookrightarrow \V^\prime \hookrightarrow \E\  \mbox{ and } \ \X \hookrightarrow \H \hookrightarrow \X^\prime \hookrightarrow \E,\]
	where the spaces $\V^\prime$ and $\X^\prime$ are the duals of $\V$ and $\X$, respectively. Assume that $p>1$ and $\u\in \mathrm{L}^{2}(0,T;\V)\cap \mathrm{L}^p (0,T; \X)$,  $\u^{\prime}\in \mathcal{D}^\prime(0,T; \E)$ and $\u^\prime = \u_1 +\u_2$, where $\u_1 \in \mathrm{L}^{2}(0,T;\V^\prime)$ and $\u_2 \in \mathrm{L}^{p^\prime}(0,T;\X^{\prime})$. Then, $\u \in \mathrm{C}([0,T]; \H).$
\end{theorem}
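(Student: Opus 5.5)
The plan is to adapt the classical proof of the Lions--Magenes lemma: regularize $\u$ in time, show that the regularizations form a Cauchy sequence in $\mathrm{C}([0,T];\H)$, and identify the limit with $\u$. The one new ingredient compared with the Hilbertian case is that the time derivative splits into a $\V'$-part and an $\X'$-part. Each part will be paired with $\u$ in its own duality: $\mathrm{L}^2(0,T;\V')$ against $\mathrm{L}^2(0,T;\V)$, and $\mathrm{L}^{p'}(0,T;\X')$ against $\mathrm{L}^p(0,T;\X)$.

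\emph{Step 1: extension and mollification.} Extend $\u$ to $(-\delta,T+\delta)$ by even reflection about $t=0$ and $t=T$. The extended function still lies in $\mathrm{L}^2(\V)\cap\mathrm{L}^p(\X)$. Its distributional derivative in $\E$ is the odd reflection of $\u_1+\u_2$, so it again splits into an $\mathrm{L}^2(\V')$ part and an $\mathrm{L}^{p'}(\X')$ part; one checks this by testing against $\varphi\in \mathrm{C}_0^\infty$. Because $\V'\hookrightarrow\E$ and $\X'\hookrightarrow\E$ are injective, no boundary terms appear in $\E$. Multiply by a smooth cutoff equal to $1$ on $[0,T]$ and convolve in time with a mollifier $\rho_\varepsilon$ to obtain $\u_\varepsilon:=\rho_\varepsilon*\u$. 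These functions are $\mathrm{C}^\infty$ in $t$ with values in $\V\cap\X$, and they satisfy
\[
\u_\varepsilon\to\u \text{ in } \mathrm{L}^2(0,T;\V)\cap\mathrm{L}^p(0,T;\X),
\qquad
\u_\varepsilon'=\rho_\varepsilon*\u_1+\rho_\varepsilon*\u_2 ,
\]
with $\rho_\varepsilon*\u_1\to\u_1$ in $\mathrm{L}^2(\V')$ and $\rho_\varepsilon*\u_2\to\u_2$ in $\mathrm{L}^{p'}(\X')$. Here one uses that convolution commutes with distributional differentiation in $\E$, and that the representation of $\u'$ as a sum is consistent inside $\E$, which again relies on the embeddings being injective.

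\emph{Step 2: energy identity for smooth functions and the Cauchy estimate.} Set $\w=\u_\varepsilon-\u_\eta$. Since $\w$ is smooth in time with values in $\V\cap\X\hookrightarrow\H$, it satisfies
\[
\|\w(t)\|_{\H}^2=\|\w(s)\|_{\H}^2+2\int_s^t\Big(\langle \w_1',\w\rangle_{\V',\V}+\langle \w_2',\w\rangle_{\X',\X}\Big)\d\tau .
\]
This requires the compatibility fact that for $\boldsymbol{h}\in\H$ and $\boldsymbol{v}\in\V\cap\X$ the $\H$-inner product agrees with both duality pairings, which follows from the Gelfand triple structure $\V\hookrightarrow\H\cong\H'\hookrightarrow\V'$ (and likewise for $\X$). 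Integrating in $s$ over $(0,T)$ gives
\[
\sup_{t}\|\w(t)\|_{\H}^2\le \tfrac1T\|\w\|_{\mathrm{L}^2(\H)}^2+2\|\w_1'\|_{\mathrm{L}^2(\V')}\|\w\|_{\mathrm{L}^2(\V)}+2\|\w_2'\|_{\mathrm{L}^{p'}(\X')}\|\w\|_{\mathrm{L}^p(\X)} .
\]
Every term on the right tends to $0$ as $\varepsilon,\eta\to0$, using $\V\hookrightarrow\H$ for the first term. Hence $\{\u_\varepsilon\}$ is Cauchy in $\mathrm{C}([0,T];\H)$. Its limit coincides with $\u$ almost everywhere, because $\u_\varepsilon\to\u$ in $\mathrm{L}^2(\H)$. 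Therefore $\u$ has a representative in $\mathrm{C}([0,T];\H)$.

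\emph{Main obstacle.} The delicate point is not the estimate but Step 1's bookkeeping. One must justify that the splitting $\u'=\u_1+\u_2$, which is only given in $\mathcal{D}'(0,T;\E)$, survives reflection, cutoff and mollification. One must also justify that, for the smooth approximants, $\frac{\d}{\d t}\|\w\|_{\H}^2$ equals the sum of the two separate dualities. I would handle both by testing with scalar $\varphi\in\mathrm{C}_0^\infty$ and using the injectivity of $\V'\hookrightarrow\E$ and $\X'\hookrightarrow\E$. The compatibility of the pairings would be checked first on $\H$-valued pieces and then extended by density.
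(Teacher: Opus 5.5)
The paper does not prove this result; it quotes it from \cite{VVMI}. Your reflection--cutoff--mollification scheme is the standard Lions--Magenes route, and once the energy identity for $\w=\u_\varepsilon-\u_\eta$ is available, your Cauchy estimate in $\mathrm{C}([0,T];\H)$ is fine. The gap is in that identity, at exactly the point you flag. Only the sum $\w'(\tau)$ lies in $\H$. The pieces $\w_1'(\tau)\in\V'$ and $\w_2'(\tau)\in\X'$ need not, and $\w_2'(\tau)=\w'(\tau)-\w_1'(\tau)$ lies in $\V'\cap\X'$ (intersection taken in $\E$). The Gelfand triple therefore gives $(\w',\w)_{\H}=\langle\w_1',\w\rangle_{\V',\V}+\langle\w_2',\w\rangle_{\V',\V}$. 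To reach your formula you still need $\langle\boldsymbol{c},\boldsymbol{v}\rangle_{\V',\V}=\langle\boldsymbol{c},\boldsymbol{v}\rangle_{\X',\X}$ for all $\boldsymbol{c}\in\V'\cap\X'$ and $\boldsymbol{v}\in\V\cap\X$, whereas the Gelfand triples give this only for $\boldsymbol{c}\in\H$. Your density extension does not close the gap. If you approximate $\w_1'$ in $\V'$ and $\w_2'$ in $\X'$ separately by $\H$-valued functions, you destroy the constraint $\w_1'+\w_2'=\w'$. What you would need is that $\{(h,-h):h\in\H\}$ be dense in $\{(\boldsymbol{a},\boldsymbol{b})\in\V'\times\X':\boldsymbol{a}+\boldsymbol{b}=0\text{ in }\E\}$, and that is the compatibility itself.

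This compatibility does not follow from the listed embeddings. Take $\H=\ell^2$ and let $\V,\X$ be weighted $\ell^2$ spaces with weights $v_{2k}=w_{2k+1}=k+1$ and $v_{2k+1}=w_{2k}=1$. Then $\boldsymbol{a}=\mathbf{1}_{\mathrm{even}}\in\V'\setminus\X'$ and $\boldsymbol{b}=\mathbf{1}_{\mathrm{odd}}\in\X'\setminus\V'$. Let $\E:=(\V'+\X')/\mathrm{span}\{\boldsymbol{a}-\boldsymbol{b}\}$, where $\V'+\X'$ is the sum of the two sequence spaces. All the required embeddings hold and are injective, because $\boldsymbol{a}-\boldsymbol{b}\notin\V'\cup\X'$. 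Yet $\boldsymbol{a}=\boldsymbol{b}$ in $\E$, while $\langle\boldsymbol{a},e_0\rangle_{\V',\V}=1\neq0=\langle\boldsymbol{b},e_0\rangle_{\X',\X}$. Consequently, for any $\mu\in\mathrm{C}_0^\infty(0,T)$, the pair $(\u_1+\mu\boldsymbol{a},\u_2-\mu\boldsymbol{b})$ is another admissible splitting of $\u'$. It changes the right-hand side of your identity but not the left-hand side, so the identity cannot hold for every admissible splitting. The compatibility therefore has to be imposed as a hypothesis. One way: require $\V\cap\X$ to be dense in $\V$ and in $\X$, and $\E\hookrightarrow(\V\cap\X)'$ with $\V'$ and $\X'$ embedded by restriction, so that every element of $\V'+\X'$ acts on $\V\cap\X$ through a single pairing. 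This holds in the paper's application, $\X=\wi\L^{r+1}$ and $\E=\V'+\wi\L^{\frac{r+1}{r}}$ realized inside $(\V\cap\wi\L^{r+1})'$, because $\mathcal{V}$ is dense in both $\V$ and $\wi\L^{r+1}$. With that hypothesis your argument is complete.

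Two smaller corrections to Step 1. Even reflection produces no boundary terms because the transformed test function $\varphi(t)-\varphi(-t)$ vanishes at $t=0$ (equivalently, because $\u\in\mathrm{W}^{1,1}(0,T;\E)$), not because of injectivity. The cutoff $\zeta$ adds $\zeta'\u\in\mathrm{L}^2(0,T;\V)\subset\mathrm{L}^2(0,T;\V')$ to the derivative, which you should place in the $\V'$-part.
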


\begin{theorem}[{\cite[Proposition 2]{PKa}, \cite[Theorem 5]{JSi}}]\label{thm-Simon}
	Let $1\leq p,q <\infty$  and $\E_1 \hookrightarrow \E_2 \hookrightarrow \E_3$ be real Banach spaces such that $\E_1$ is reflexive, $\E_1 \hookrightarrow \E_2$  is compact embedding and $\E_2 \hookrightarrow \E_3$ is continuous. Then the following condition hold: 
	\begin{enumerate}
		\item  If $\mathcal{H}$ is a bounded subset of $\mathscr{W}^{p,q}(0,T;\E_1,\E_3):=\{\u\in\mathrm{L}^p(0,T;\E_1): \u^{\prime}\in\mathrm{L}^q(0,T;\E_3)\},$  then it is relatively compact in $\mathrm{L}^p(0,T;\E_2)$.
	\end{enumerate}
\end{theorem}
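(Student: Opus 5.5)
The statement is the Aubin--Lions--Simon compactness result, so I would reduce it to Simon's characterization of relatively compact sets in $\mathrm{L}^p(0,T;\E)$ with $1\le p<\infty$. That characterization says a set $\mathcal{F}\subset\mathrm{L}^p(0,T;\E)$ is relatively compact if and only if two conditions hold:
\begin{enumerate}
\item[(a)] for all $0<t_1<t_2<T$ the set $\{\int_{t_1}^{t_2}\u(t)\d t:\u\in\mathcal{F}\}$ is relatively compact in $\E$;
\item[(b)] $\|\u(\cdot+h)-\u\|_{\mathrm{L}^p(0,T-h;\E)}\to0$ as $h\downarrow0$, uniformly in $\u\in\mathcal{F}$.
\end{enumerate}
I would apply this with $\E=\E_2$ and $\mathcal{F}=\mathcal{H}$. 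I take this criterion as known, since it is the content of the Simon reference cited in the statement. Since $T<\infty$ and $q\ge1$, a bound on $\u'$ in $\mathrm{L}^q(0,T;\E_3)$ implies a bound in $\mathrm{L}^1(0,T;\E_3)$. So it suffices to treat the case $q=1$, which I do from now on.

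\textbf{Condition (a).} For $\u\in\mathcal{H}$, Hölder's inequality gives $\|\int_{t_1}^{t_2}\u\,\d t\|_{\E_1}\le (t_2-t_1)^{1-1/p}\|\u\|_{\mathrm{L}^p(0,T;\E_1)}$. Hence these integrals form a bounded subset of $\E_1$. Because $\E_1\hookrightarrow\E_2$ is compact, they form a relatively compact subset of $\E_2$.

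\textbf{Condition (b).} The first tool is Ehrling's lemma. It is obtained by a standard contradiction argument that uses the compactness of $\E_1\hookrightarrow\E_2$ and the injectivity of $\E_2\hookrightarrow\E_3$. It states that for every $\eta>0$ there is $C_\eta>0$ such that
\[
\|\boldsymbol{v}\|_{\E_2}\le \eta\|\boldsymbol{v}\|_{\E_1}+C_\eta\|\boldsymbol{v}\|_{\E_3}\quad\text{for all }\boldsymbol{v}\in\E_1 .
\]
Applying this to $\boldsymbol{v}=\u(t+h)-\u(t)$ and integrating gives
\[
\|\u(\cdot+h)-\u\|_{\mathrm{L}^p(0,T-h;\E_2)}\le 2\eta\,\sup_{\mathcal{H}}\|\u\|_{\mathrm{L}^p(0,T;\E_1)}+C_\eta\|\u(\cdot+h)-\u\|_{\mathrm{L}^p(0,T-h;\E_3)} .
\]
For the $\E_3$ term, $\u$ is absolutely continuous into $\E_3$, so $\|\u(t+h)-\u(t)\|_{\E_3}\le G_h(t):=\int_t^{t+h}\|\u'(s)\|_{\E_3}\d s$. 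Set $g:=\|\u'\|_{\E_3}$. Then $\|G_h\|_{\mathrm{L}^\infty}\le\|g\|_{\mathrm{L}^1}$, and by Fubini $\|G_h\|_{\mathrm{L}^1}\le h\|g\|_{\mathrm{L}^1}$. Interpolating between these two bounds yields $\|G_h\|_{\mathrm{L}^p}\le h^{1/p}\|g\|_{\mathrm{L}^1(0,T)}$. This tends to $0$ uniformly over $\mathcal{H}$. To conclude (b), I first choose $\eta$ small and then $h$ small.

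I expect the main obstacle to be this translation estimate. The difficulty is that $\u'$ is controlled only in $\mathrm{L}^1$ (or $\mathrm{L}^q$) and only in the weak space $\E_3$, yet we need $\mathrm{L}^p$-equicontinuity in $\E_2$. The combination of Ehrling's inequality with the $\mathrm{L}^\infty$--$\mathrm{L}^1$ interpolation of $G_h$ is what handles it. Reflexivity of $\E_1$ is not needed for this argument; it only matters if one prefers the alternative route of extracting weakly convergent subsequences in $\mathrm{L}^p(0,T;\E_1)$.
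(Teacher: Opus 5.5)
Your proof is correct, and it follows Simon's own route: the compactness characterization in $\mathrm{L}^p(0,T;\E_2)$, the Ehrling-type inequality, and the $h^{1/p}$ translation bound obtained from the $\mathrm{L}^1(0,T;\E_3)$ bound on the time derivative. The paper gives no proof of its own and simply imports the result via \cite[Theorem 5]{JSi} and \cite[Proposition 2]{PKa}. Your remark that reflexivity of $\E_1$ is superfluous is also right; it is needed only in the older Lions-type proof based on weak compactness.
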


%The following  result will be used to establish the covergence of the nonlinear term. 
\begin{lemma}[{\cite[Lemma 1.3]{JLL}}]\label{Lem-Lions}
Consider $\mathfrak{D}_T \subset \R^d\times \R$ is a bounded open set, and let $\varphi_m$, $\varphi$ be functions in $\mathrm{L}^q(\mathfrak{D}_T)$ with $1<q <\infty$ for $m\in\N$, such that
	\begin{align*}
		\|\varphi_m\|_{\mathrm{L}^q(\mathfrak{D}_T)} \leq C,\ \ \mbox{for all}\ \ m\in\N \; \; \mbox{and} \ \ \varphi_m \to \varphi\;\; \mbox{a.e. in}\ \ \mathfrak{D}_T,\  \mbox{as}\ \ m \to \infty.
	\end{align*}
Then, $\varphi_m \xrightarrow{w} \varphi$ in $\mathrm{L}^q(\mathfrak{D}_T)$, as $m\to \infty$.
\end{lemma}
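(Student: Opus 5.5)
The plan is to deduce the weak convergence from the uniform bound combined with the a.e. convergence. Uniqueness of the limit will then give convergence of the whole sequence. Since $1<q<\infty$, the space $\mathrm{L}^q(\mathfrak{D}_T)$ is reflexive. The uniform bound $\|\varphi_m\|_{\mathrm{L}^q(\mathfrak{D}_T)}\leq C$ therefore lets every subsequence have a further subsequence converging weakly to some $\psi\in\mathrm{L}^q(\mathfrak{D}_T)$, by the Banach--Alaoglu (Eberlein--\v{S}mulian) theorem. It thus suffices to show that every such weak limit $\psi$ equals $\varphi$ a.e. The standard subsequence argument then gives $\varphi_m\xrightarrow{w}\varphi$ for the full sequence. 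As a side remark, Fatou's lemma gives $\|\varphi\|_{\mathrm{L}^q}\leq \liminf_m\|\varphi_m\|_{\mathrm{L}^q}\le C$, so $\varphi\in\mathrm{L}^q$ is consistent with the hypotheses.

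The identification $\psi=\varphi$ is the key step, and I will do it with Egorov's theorem, using that $\mathfrak{D}_T$ is bounded and hence of finite measure. Fix $\delta>0$. Egorov's theorem gives a measurable set $E_\delta\subset\mathfrak{D}_T$ with $|\mathfrak{D}_T\setminus E_\delta|<\delta$ such that $\varphi_m\to\varphi$ uniformly on $E_\delta$. Take any $g\in\mathrm{L}^\infty(\mathfrak{D}_T)$ supported in $E_\delta$. Uniform convergence on a set of finite measure gives $\int \varphi_m g\to\int\varphi g$. Weak convergence along the subsequence gives $\int\varphi_{m_k} g\to\int\psi g$, since $g\in\mathrm{L}^{q'}$. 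Hence $\int(\psi-\varphi)g=0$ for all such $g$, so $\psi=\varphi$ a.e. on $E_\delta$.

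Now let $\delta=1/j$ and take the union over $j$. This union exhausts $\mathfrak{D}_T$ up to a null set, so $\psi=\varphi$ a.e. on $\mathfrak{D}_T$.

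An alternative route avoids subsequences: split $\int(\varphi_m-\varphi)g$ over $E_\delta$ and its complement. The complement contributes at most $(C+\|\varphi\|_{\mathrm{L}^q})\|g\|_{\mathrm{L}^{q'}(\mathfrak{D}_T\setminus E_\delta)}$ by H\"older's inequality. This is small by absolute continuity of the integral of $|g|^{q'}$, which is where $q'<\infty$, i.e.\ $q>1$, is used.

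I expect no real obstacle. The only delicate point is making sure finite measure is used for Egorov's theorem and reflexivity ($q>1$) is used for weak compactness. The case $q=1$ genuinely fails because of concentration, so both hypotheses must be invoked explicitly.
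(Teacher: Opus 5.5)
Your proof is correct, and both of your routes are complete. In the first, Egorov's theorem on the finite-measure set $\mathfrak{D}_T$ identifies every weak subsequential limit with $\varphi$, and the subsequence principle then gives convergence of the whole sequence. In the second, splitting over $E_\delta$ and its complement, together with H\"older's inequality and absolute continuity of $\int |g|^{q'}$, proves the weak convergence directly.

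The paper itself gives no proof of this lemma; it is simply quoted from Lions' monograph \cite{JLL}. The standard argument for it is essentially your second route. One tests against functions $g\in\mathrm{L}^{q'}(\mathfrak{D}_T)$ supported in sets where $\varphi_m-\varphi$ is uniformly controlled, such as Egorov sets or the increasing sets $\{|\varphi_m-\varphi|\le 1 \text{ for all } m\ge N\}$. One passes to the limit there by uniform or dominated convergence, and then extends to all of $\mathrm{L}^{q'}(\mathfrak{D}_T)$ by density together with the uniform $\mathrm{L}^q$ bound.

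That direct version is slightly more economical than your first route. It needs neither reflexivity nor extraction of subsequences, and it shows that $q>1$ enters only through $q'<\infty$. Your compactness-plus-identification route is equally valid. It is the natural template when the weak limit must be produced first and identified afterwards.
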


\section{A domain hemivariational inequality}\label{sec3}\setcounter{equation}{0}
In this section, we examine the domain hemivariational inequality associated with problem \eqref{eqn-dom}. Let us first provide the variational (or weak formulation) of \eqref{eqn-dom}. By assuming sufficient regularity of the functions involved, we multiply \eqref{eqn-dom} by $\v\in\V\cap\wi\L^{r+1}$ to deduce
\begin{align}\label{varform}
\frac{\d}{\d t}\langle\u,\v\rangle +\langle\mu\A\u+\B(\u)+\alpha\wi{\C}(\u)+\beta\C(\u),\v\rangle=
\langle\f(t)+\g(t),\v\rangle.
\end{align}
For the ease of notation, we set $ j(x,t,\u(x,t)) \equiv j(t,\u(t))$. 
From the definition of generalized gradient \eqref{def-gra}, we write
\begin{align}\label{cLaerdef1}
	\partial_{\mathpzc{C}} j(\cdot,\u)=\left\{\boldsymbol{\zeta}\in\R^d:
	j^0(\cdot,\u;\v)\geq \boldsymbol{\zeta}\cdot\v\ \text{ for all }\ \v\in\R^d \right\}. 
\end{align}
In view of \eqref{eqn-sub-diff}, since $-\g(\cdot)\in\partial_{\mathpzc{C}} j(\cdot,\u)$,   for $\u=(\u_1,\ldots,\u_d)$, from the \eqref{cLaerdef1}, we have 
\begin{align}\label{eqn-clarke}
	-\int_{\mathfrak{D}}\g(x,t)\cdot\v(x)\d x\leq 
	\int_{\mathfrak{D}}j^0(t, \u(t);\v)\d x= \sum_{i=1}^{d}
	\int_{\mathfrak{D}}j_i^0(t,\u_i(t);\v_i)\d x, 
\end{align}
where $j^0(t,\boldsymbol{\xi};\boldsymbol{\zeta})\equiv j^0(x,t,\boldsymbol{\xi};\boldsymbol{\zeta})$ denotes the generalized directional derivative of $j(x,t,\cdot)$ at the point $\boldsymbol{\xi}\in\R^d$ in the direction $\boldsymbol{\zeta}\in\R^d$. Note that in order to make sense of the left most integral term in \eqref{eqn-clarke}, $\g$ must be $\H-$valued. 
Let us define a set 
\begin{align*}
	\mathcal{W}:=&\left\{\u\in\mathrm{L}^{\infty}(0,T;\H)\cap\mathrm{L}^2(0,T;\V)\cap\mathrm{L}^{r+1}(0,T;\wi\L^{r+1})\big|\right.\nonumber\\&\qquad\left.
	\frac{\d\u}{\d t}\in\mathrm{L}^2(0,T;\V^{\prime})+ \mathrm{L}^{\frac{r+1}{r}}(0,T;\wi\L^{\frac{r+1}{r}})\right\}.
\end{align*} 
From Theorem \ref{Thm-Abs-cont}, we infer that the embedding $\mathcal{W}  \hookrightarrow \mathrm{C}([0,T];\H)$ is continuous. Consider $\u^0\in\H$ and $\f\in\mathrm{L}^2(0,T;\V^{\prime})$ are given. From \eqref{varform} and \eqref{eqn-clarke},
we deduce the following variational formulation of \eqref{eqn-dom}.
\begin{problem}\label{prob-inequality}
	Find $\u\in\mathcal{W}$ such that 
	\begin{equation*}
		\left\{
		\begin{aligned}
		&	\frac{\d}{\d t}\langle\u,\v\rangle +\langle\mu\A\u
			+\B(\u)+\alpha\wi{\C}(\u)+\beta\C(\u),\v\rangle
			\\&\quad+
			\sum_{i=1}^{d}
			\int_{\mathfrak{D}}j_i^0(t,\u_i(t);\v_i)\d x \geq\langle\boldsymbol{f}(t),\v\rangle, \ \text{ for a.e. } \ t\in[0,T],\\
		&	\u(0)=\u^0,
		\end{aligned}
		\right.
	\end{equation*}
	for all $\v\in\V\cap\wi\L^{r+1}$. 
\end{problem}
{We now provide the following definition of weak solution, inspired by \cite[Defintion 2.1]{GGP} and \cite[Definition 1.1.1, Chapter V]{HS}.
\begin{definition}\label{weakdef}
Let $1 \leq q < r $. A function $\u\in\mathcal{W}$ is called a \emph{weak solution} to the system \eqref{eqn-dom}, if for 
a given $\f\in\mathrm{L}^2(0,T;\V^{\prime})$ and $\u^0\in\H$, there exists $-\g(\cdot)\in\partial_{\mathpzc{C}} j(\cdot,\u)$ with values in $\H$ such that  $\u(\cdot)$ satisfies the following weak formulation:
	\begin{align}\label{3.6}
		-&\int_{0}^{t} \int_{\mathfrak{D}} \u(x,s) \partial_t \boldsymbol{\phi}(x,s)\d x\d s
		+ \mu \int_{0}^{t}\int_{\mathfrak{D}} \nabla \u(x,s)\cdot \nabla \boldsymbol{\phi}(x,s)\d x\d s \nonumber \\
		& +\int_{0}^{t}\int_{\mathfrak{D}} \big((\u(x,s)\cdot\nabla)\u(x,s)\big)\cdot \boldsymbol{\phi}(x,s)\d x\d s 	-\int_{0}^{t}(\g(s),\boldsymbol{\phi}(s))\d s \nonumber\\
		& +\int_{0}^{t}\int_{\mathfrak{D}}
		\left(\alpha |\u(x,s)|^{q-1}\u(x,s)
		+\beta |\u(x,s)|^{r-1}\u(x,s)\right)\cdot\boldsymbol{\phi}(x,s)\d x\d s \nonumber\\
		=& \int_{\mathfrak{D}} \u(x,0)\boldsymbol{\phi}(x,0)\d x
		-\int_{\mathfrak{D}} \u(x,t)\boldsymbol{\phi}(x,t)\d x
		+\int_{0}^{t}\langle \f(s),\boldsymbol{\phi}(s)\rangle\d s ,
	\end{align}
for all $t\in[0,T]$ and test functions  $\boldsymbol{\phi} \in \mathrm{C}_{0}^{\infty}([0,T);\mathcal{V})$. Moreover, the initial data is satisfied in the following sense: 
\begin{align*}
	\lim\limits_{t\to0}(\u(t),\v)=(\u^0,\v)\ \text{ for all } \ \v\in\H.
\end{align*}
\end{definition}

\begin{remark}
1.) The regularity 
$$\u \in \mathrm{L}^{\infty}(0,T;\H)\cap \mathrm{L}^{2}(0,T;\V) \cap \mathrm{L}^{r+1}(0,T;\wi\L^{r+1})$$
and
	$$\frac{\d\u}{\d t} \in\mathrm{L}^{2}(0,T;\V^{\prime})+\mathrm{L}^{\frac{r+1}{r}}(0,T;\widetilde\L^{\frac{r+1}{r}})\hookrightarrow\mathrm{L}^{\frac{r+1}{r}}(0,T;
		\V^{\prime}+\wi\L^{\frac{r+1}{r}})
	$$
	implies that 
$$
	\u \in \mathrm{W}^{1,\frac{r+1}{r}}(0,T;	\V^{\prime}+\wi\L^{\frac{r+1}{r}})
	\hookrightarrow \mathrm{C}([0,T];\V^{\prime}+\wi\L^{\frac{r+1}{r}}),
	$$
where we have used \cite[Theorem 2, pp. 302]{LCE}. Since $\H$ is reflexive and the embedding 
	$\H \hookrightarrow \V^{\prime}+\wi\L^{\frac{r+1}{r}}$
	is continuous, it follows from \cite[Proposition 1.7.1]{PCAM} that $
	\u \in  \mathrm{C}_{w}([0,T];\H).$
%	where $ \mathrm{C}_{w}([0,T];\H)$ denotes the space of functions 
%	$\u:[0,T]\to \H$ which are weakly continuous. 
That is, for all 
	$\boldsymbol{\upsilon} \in \H$, the scalar function 
	$$
	[0,T] \ni t \mapsto (\u(t),\boldsymbol{\upsilon}) \in \mathbb{R}
	$$
	is continuous on $[0,T]$. Thus, the first two terms on the right-hand side of \eqref{3.6} are well defined.
	
2.) Using the density of $\mathcal{V} \subset \V\cap\wi\L^{r+1}$, it follows from \cite[Lemmas 2.1 and 2.2]{GGP} that \eqref{3.6} is equivalent to the following formulation: for all $t \in (0,T]$
	\begin{align}\label{3.7}
		&\mu \int_{0}^{t}(\nabla \u(s), \nabla \boldsymbol{\phi} ) \d s
		+ \int_{0}^{t} \langle (\u(s)\cdot \nabla)\u(s), \boldsymbol{\phi} \rangle \d s-\int_{0}^{t}(\g(s),\boldsymbol{\phi})\d s  \nonumber\\
		&\quad+ \int_{0}^{t} 
		\left\langle 
		\alpha |\u(s)|^{q-1} \u(s) + \beta |\u(s)|^{r-1} \u(s),\boldsymbol{\phi}
		\right\rangle \d s  \nonumber \\
		&= (\u(0),\boldsymbol{\phi}) - (\u(t),\boldsymbol{\phi}) + \int_{0}^{t} \langle \f(s),\boldsymbol{\phi} \rangle\d s ,
	\end{align}
	for all $\boldsymbol{\phi} \in \V\cap\wi\L^{r+1}$.	Since $-\g(\cdot)\in\partial_{\mathpzc{C}} j(\cdot,\u)$, then by applying  \eqref{eqn-clarke}, the formulation \eqref{3.7} further reduces to 
	\begin{align*}
	&\mu \int_{0}^{t}(\nabla \u(s), \nabla \boldsymbol{\phi} ) \d s
	+ \int_{0}^{t} \langle (\u(s)\cdot \nabla)\u(s), \boldsymbol{\boldsymbol{\phi}} \rangle \d s+	\sum_{i=1}^{d}
	\int_{0}^{t} \int_{\mathfrak{D}}j_i^0(s,\u_i(s);\boldsymbol{\boldsymbol{\phi}}_i)\d x\d s  \nonumber\\
	&+ \int_{0}^{t} 
	\left\langle 
	\alpha |\u(s)|^{q-1} \u(s) + \beta |\u(s)|^{r-1} \u(s),\boldsymbol{\phi}
	\right\rangle \d s \geq (\u(0),\boldsymbol{\phi}) - (\u(t),\boldsymbol{\phi}) + \int_{0}^{t} \langle \f(s),\boldsymbol{\phi} \rangle\d s,
\end{align*}
for all $\boldsymbol{\phi} \in \V\cap\wi\L^{r+1}$.
\end{remark}}

We address Problem \ref{prob-inequality} by employing a Galerkin approximation scheme applied to a suitably regularized version of the problem. The main analytical challenge in the variational formulation stems from the presence of the term involving the Clarke subdifferential, which is generally non-smooth and multivalued. To handle this difficulty and to make the Galerkin method applicable, we first introduce a regularization of the Clarke subdifferential. This regularization yields a family of well-posed finite-dimensional problems, enables the derivation of uniform energy estimates, and facilitates the passage to the limit in the approximation process.
 
\subsection{Regularization of Clarke subdifferential}
For $i=1,\ldots,d$, let 
$$\theta_i :\mathfrak{D} \times(0,T) \times \mathbb{R} \to \mathbb{R}, 
\ (x,t,\xi)\mapsto\theta_i(x,t,\xi),$$
be a function which is possibly non-monotone and non-smooth with respect to the variable $\xi$. The following hypothesis on $\theta_i$ is adopted from the framework developed in \cite{SMAO} and \cite[Section 7.3, Chapter 7, pp. 108]{DGDM}, with minor modifications:

\begin{hypothesis}\label{hyptheta} We assume that the function $\theta_i$ satisfies the following hypotheses:
	
	(i) \textbf{Local boundedness:} For every $r>0,$ there exists a constant $c=c(r)>0$ such that 
	\begin{align*}
	|\theta_i(x,t,\xi)| \leq c(r),\ \text{ for all } \ (x,t) \in \mathfrak{D} \times(0,T) \text{ and a.e. } |\xi| \leq r;
	\end{align*}
% (ii) $\theta_i$ is uniformly continuous with respect to $\xi$, that is, there exists  $\varepsilon_0 > 0$ such that for all $(x,t,\xi) \in  \mathfrak{D} \times(0,T) \times \mathbb{R}, \text{ and for all } \delta >0, \text{ there exists } \gamma = \gamma(\delta,x,t,\xi) > 0$ such that
%	$$|\theta_i(x,t,\xi)-\theta_i(x,t,\xi')| <\delta\text{ as } |\xi-\xi'| <\varepsilon_0;$$
 
   (ii) \textbf{Continuity in $(x,t)$:} For a.e. $\xi\in\mathbb{R}$, the mapping $$(x,t) \mapsto \theta_i(x,t,\xi)$$ is continuous on $\mathfrak{D} \times(0,T);$
   
   (iii) \textbf{Growth condition:} $\theta_i\in \mathrm{L}^{\infty}_{\mathrm{loc}}(\mathbb{R})$ is such that the left and right limits $\theta_i(x,t,\xi\pm0)$ exist for every $ \xi \in \mathbb{R}$ and it verifies the growth condition
   \begin{align}\label{growth}
   |\theta_i(x,t,\xi)|\leq\bar{\alpha}(x,t)+C_{1,i} + C_{2,i}|\xi|,\  \text{ for all } \ (x,t,\xi) \in \mathfrak{D} \times(0,T) \times \mathbb{R},
   \end{align}
   with a non-negative function $ \bar{\alpha} \in \mathrm{L}^2(\mathfrak{D} \times(0,T)) \text{ and positive constants } C_{1,i}, C_{2,i};$
   
   (iv) \textbf{Ultimate monotonicity in $\xi$:} For each $i=1,\ldots, d$, the graph $(\xi,\theta_i(x,t,\xi))$ is ultimate increasingly with respect to $\xi$, that is, there exists $\phi_i \geq 0$ such that 
   \begin{align}\label{thetsupinf}
    \esssup\limits_{\xi \in (-\infty,-\phi_i)}\theta_i(x,t,\xi) \leq 0 \leq \essinf\limits_{\xi \in (\phi_i,\infty)}\theta_i(x,t,\xi).
   \end{align}
\end{hypothesis}

Since $\theta_i(x,t,\cdot):\mathbb{R}\to\mathbb{R}$ may be discontinuous with respect to $\xi$, we introduce its regularization by `filling in the jumps'. For $\varepsilon>0$, $(x,t)\in \mathfrak{D}\times(0,T)$ and $\xi\in\mathbb{R},$ we define the lower and upper envelopes
\begin{align*}
	\underline{\theta_i}^{\varepsilon}(x,t,\xi) := \essinf\limits_{|\tau - \xi| \leq \varepsilon} \theta_i(x,t,\tau),\\
	\overline{\theta_i}^{\varepsilon}(x, t,\xi) := \esssup\limits_{|\tau - \xi| \leq \varepsilon} \theta_i(x,t, \tau).
\end{align*} 
For $(x, t)\in\mathfrak{D} \times(0,T) $ and $ \xi \in \mathbb{R}$ fixed, the functions $\underline{\theta_i}^{\varepsilon}$ and $\overline{\theta_i}^{\varepsilon}$ are monotone with respect to $\varepsilon$, being increasing and decreasing, respectively. Consequently, the following limits are well-defined:
\begin{align*}
	\underline{\theta_i}(x, t,\xi):= \lim_{\varepsilon \to 0^{+}} \underline{\theta_i}^{\varepsilon}(x, t,\xi), \\
	\overline{\theta_i}(x,t,\xi ):= \lim_{\varepsilon \to 0^{+}} \overline{\theta_i}^{\varepsilon}(x,t,\xi ).
\end{align*}
For $\varepsilon>0,$ and $t\in\mathbb{R},$ we define the multivalued map $\widehat{\theta_i} \colon \mathfrak{D}\times(0,T)\times\mathbb{R}\multimap \mathbb{R}$ by
\begin{align}\label{Def-theta}
	\widehat{\theta_i}(x,t,\xi)=[\underline{\theta_i}(x,t,\xi), \overline{\theta_i}(x,t,\xi)],
\end{align}
that is, $\widehat{\theta_i}(x,t,\xi)$ is the closed interval with end points given by $\underline{\theta_i}(x,t,\xi)$ and 
$\overline{\theta_i}(x,t,\xi)$, respectively. 

From \cite[Section 2]{KCC}, it has been shown that if the limits $\theta_i(x,t,\xi\pm0)$ exist for every $\xi\in\R$, then one can determine the Clarke subdifferential for a locally Lipschitz function $j_i(x,t,\cdot):\mathbb{R} \to\mathbb{R}$ such that 
\begin{align}\label{Equality}
	\partial_C j_i(x,t,\xi)=\widehat{\theta_i}(x,t,\xi)\  \text{ for } \ \xi \in \mathbb{R},
\end{align}
and $j_i$ is obtained from $\theta_i$ by the following relation 
\begin{align}\label{eqn-310}
j_i(x,t,\xi)= \int_0^\xi \theta_i(x,t,s)\d s.
\end{align}

The following result guarantees the measurability of the lower and upper envelopes $\underline{\theta_i}(x,t,\xi)$ and 
$\overline{\theta_i}(x,t,\xi)$, respectively.
\begin{proposition}[{\cite[Proposition 3.1]{SMAO}}]
	Assume that the function $\theta_i$ satisfies Hypothesis \ref{hyptheta}. Then, the functions
	$$(x,t) \mapsto \underline{\theta}_i(x,t,\u_i(x,t)) \ \text{ and }\ (x,t) \mapsto \overline{\theta}_i(x,t,\u_i(x,t))$$
	are measurable on $\mathfrak{D} \times(0,T)$ for any measurable function $\u_i\colon \mathfrak{D} \times(0,T) \to \mathbb{R}$ for $i=1,\ldots,d$.
\end{proposition}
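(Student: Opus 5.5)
The strategy is to reduce measurability of the composed maps to measurability of a countable family of Carathéodory-type functions. First we express the envelopes as limits of essential infima and suprema taken over rational data. I treat only $\overline{\theta}_i$; the case of $\underline{\theta}_i$ is symmetric, with $\essinf$ in place of $\esssup$ and reversed monotonicity in $\varepsilon$.

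\textbf{Step 1: pass to a countable limit in $\varepsilon$.} Since $\overline{\theta_i}^{\varepsilon}$ is decreasing in $\varepsilon$, we have
\[
\overline{\theta}_i(x,t,\xi)=\lim_{k\to\infty}\overline{\theta_i}^{1/k}(x,t,\xi).
\]
A pointwise limit of measurable functions is measurable, so it suffices to prove that
\[
(x,t)\mapsto \overline{\theta_i}^{\varepsilon}(x,t,\u_i(x,t))
\]
is measurable for each fixed $\varepsilon>0$.

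\textbf{Step 2: replace the essential supremum by a countable supremum.} Fix a full-measure set $E\subset\mathbb{R}$ on which the continuity in Hypothesis \ref{hyptheta}(ii) holds. By (iii), the one-sided limits of $\theta_i(x,t,\cdot)$ exist everywhere. Hence $\theta_i(x,t,\cdot)$ is regulated, so its discontinuity set is countable and, in particular, Lebesgue-null.

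From this we aim to show that the essential supremum over $[\xi-\varepsilon,\xi+\varepsilon]$ equals $\sup\{\theta_i(x,t,\tau):\tau\in D\cap(\xi-\varepsilon,\xi+\varepsilon)\}$, possibly up to adjusting the endpoints. Here $D\subset E$ is a fixed countable dense set. The justification is as follows. For a regulated function, the essential supremum over an interval equals the supremum of its one-sided limits over the open interval. Each one-sided limit is approximated by values at points of $D$ that are continuity points of $\theta_i(x,t,\cdot)$ in the $\xi$ direction, and such points are generic.

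To avoid the endpoints and the exceptional points entirely, I would instead write
\[
\overline{\theta_i}^{\varepsilon}(x,t,\xi)=\lim_{m\to\infty}\ \sup_{\tau\in D,\ |\tau-\xi|<\varepsilon-1/m}\theta_i(x,t,\tau).
\]
Monotonicity in the radius makes this limit exist. The possible discrepancy between it and the closed-ball version is harmless: after passing to $\varepsilon\to0$ in Step 1, both give the same $\overline{\theta}_i$.

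\textbf{Step 3: measurability of each countable supremum after composition.} For fixed $\tau\in D$ and fixed $m$, the map $(x,t)\mapsto\theta_i(x,t,\tau)$ is continuous by (ii), hence Borel. The set
\[
\{(x,t): |\tau-\u_i(x,t)|<\varepsilon-1/m\}
\]
is measurable because $\u_i$ is measurable. Define
\[
g_\tau(x,t)=\theta_i(x,t,\tau)\,\mathbf 1_{\{|\tau-\u_i(x,t)|<\varepsilon-1/m\}}+(-\infty)\,\mathbf 1_{\{|\tau-\u_i(x,t)|\ge\varepsilon-1/m\}}.
\]
Each $g_\tau$ is measurable, and the composed supremum in Step 2 equals $\sup_{\tau\in D}g_\tau(x,t)$. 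A countable supremum of measurable functions is measurable. Moreover, the local boundedness (i) keeps every quantity finite whenever the interval is nonempty. Taking the limits in $m$ and then in $k$ yields measurability of $(x,t)\mapsto\overline{\theta}_i(x,t,\u_i(x,t))$.

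\textbf{Main obstacle.} The delicate point is Step 2, namely the identification of the essential supremum with a supremum over a countable set $D$ chosen independently of $(x,t)$. The difficulty is that the null set where $\theta_i(x,t,\cdot)$ jumps may depend on $(x,t)$. I would handle this using the existence of one-sided limits: for every $\tau_0$ and every $(x,t)$, the one-sided limit $\theta_i(x,t,\tau_0\pm0)$ is the limit of $\theta_i(x,t,\tau)$ along any sequence $\tau\to\tau_0^{\pm}$, in particular along a sequence in $D$. Consequently the supremum over $D$ of the values equals the supremum of the one-sided limits, which in turn equals the essential supremum, regardless of where the jumps lie.
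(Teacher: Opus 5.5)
\textbf{The gap is in Step 2, at exactly the point you flag as the main obstacle.}

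Your argument proves only one inequality. Since $\theta_i(x,t,\cdot)$ has one-sided limits and $D$ is dense,
\[
\sup\{\theta_i(x,t,\tau):\tau\in D,\ |\tau-\xi|<\varepsilon\}\ \ge\ \esssup_{|\tau-\xi|<\varepsilon}\theta_i(x,t,\tau).
\]
The reverse inequality needs more. At every $\tau\in D$, the value $\theta_i(x,t,\tau)$ must not exceed $\max\{\theta_i(x,t,\tau-0),\theta_i(x,t,\tau+0)\}$. A countable $D$ fixed independently of $(x,t)$ gives no such guarantee. Taking $D\subset E$ does not help, because Hypothesis (ii) concerns continuity in $(x,t)$, not in $\xi$.

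Here is a counterexample. The function $\theta_i(x,t,\xi)=\mathbf{1}_{\{\xi=0\}}$ satisfies all of Hypothesis \ref{hyptheta}, and (ii) holds for every $\xi$, so $D=\mathbb{Q}\subset E=\mathbb{R}$ is an admissible choice.
\begin{itemize}
\item Since $\overline{\theta_i}^{\varepsilon}\equiv0$, we get $\overline{\theta}_i\equiv0$.
\item Your Step 2 formula gives $1$ for every $|\xi|<\varepsilon$.
\item After $\varepsilon\to0$, your formula still gives $1$ at $\xi=0$.
\end{itemize}
So for $\u_i\equiv0$ your construction produces the constant $1$, not $\overline{\theta}_i(x,t,\u_i)=0$. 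You have proved measurability of a different function. The claim that the identification holds ``regardless of where the jumps lie'' is where the argument breaks. One-sided limits control the values \emph{near} a point. They say nothing about the value \emph{at} a point of $D$ that is itself a jump of $\theta_i(x,t,\cdot)$ for some $(x,t)$.

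\textbf{The repair is easy.} Exclude the centre point and pass directly to the limit, skipping the fixed-$\varepsilon$ identification of Step 1. Because the one-sided limits exist, $\overline{\theta}_i(x,t,\xi)=\max\{\theta_i(x,t,\xi-0),\theta_i(x,t,\xi+0)\}$. Moreover, $\theta_i(x,t,\tau)\to\theta_i(x,t,\xi\pm0)$ as $\tau\to\xi^{\pm}$ through \emph{all} $\tau$, jump points included. Together these give
\[
\overline{\theta}_i(x,t,\xi)=\lim_{k\to\infty}\ \sup\{\theta_i(x,t,\tau):\ \tau\in D,\ 0<|\tau-\xi|<1/k\}.
\]
Your Step 3 then goes through verbatim, using the measurable sets $\{(x,t):0<|\tau-\u_i(x,t)|<1/k\}$. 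The same works for $\underline{\theta}_i$ with infima.

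Alternatively, obtain $\theta_i(x,t,\u_i(x,t)\pm0)$ as limits of $\theta_i(x,t,\tau_n(\u_i(x,t)))$. Here $\tau_n$ is a Borel, $D$-valued step function with $\tau_n(\xi)\in(\xi,\xi+1/n)$ for the right limit, and mirrored for the left.

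The paper itself gives no argument for this statement; it only cites \cite[Proposition 3.1]{SMAO}.
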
 
%The existence of solution will be proved by applying the Galerkin method to a regularized problem which is given below. 
To define the regularized problem, we consider a mollifier 
\begin{align}\label{uprho-d}
\uprho \in \mathrm{C}_0^{\infty}(-1,1), ~\uprho \geq 0,~ \int_{\mathbb{R}}\uprho(s)\d s=1.
\end{align}
The existence of such mollifier is ensured from \cite[Lemma 7.3.1, pp. 109] {DGDM}. For $\varepsilon>0$, we define $\uprho_\varepsilon(s) := \frac{1}{\varepsilon}\uprho\left(\frac{s}{\varepsilon}\right)$. Now, we introduce the regularization
 \begin{align}\label{Reg-1}
 	 \theta_{i,\varepsilon}(x,t,\xi) := \uprho_{\varepsilon} * \theta_i(x,t,\xi)= \int_{\mathbb{R}}\uprho_\varepsilon(s)\theta_i(x,t,\xi-s)\d s=\int_{\mathbb{R}}\uprho_\varepsilon(\xi-s)\theta_i(x,t,s)\d s.
 \end{align}
The following result (see \cite[Lemma 7.3.2, pp. 109-110]{DGDM}) plays a crucial role in establishing the energy estimates for the finite-dimensional regularized problem arising in the Galerkin approximation procedure (see Step II of Theorem \ref{thm 4.3}). 
%For  reader's convenience, we present a proof of this result below.
\begin{lemma}\label{lemma-1}
Let $\uprho$ be the mollifier defined in \eqref{uprho-d}. Suppose that the functions $\theta_i$ satisfy condition \eqref{thetsupinf}. Then, there exist positive constants $c_{i1}, c_{i2}>0, \ i=1,\ldots,d,$ such that for sufficiently small $\varepsilon >0$, the following assertions hold:
\begin{align*}
\sum_{i=1}^{d}\int_{\mathfrak{D}} \theta_{i, \varepsilon}(x,t,\xi)\xi\d x \geq -\sum_{i=1}^{d}c_{i1}c_{i2}|\mathfrak{D}|, \ \text{ for all } \
\xi \in \R,
\end{align*}
where the regularized functions $\theta_{i,\varepsilon}$ satisfy 
\begin{equation*}
	\left\{
	\begin{aligned}
	\theta_{i, \varepsilon}(x,t,\xi) \leq 0 &\ \text{ if }\  \xi < -c_{i1},\\
	\theta_{i, \varepsilon}(x,t,\xi) \geq 0 &\ \text{ if } \ \xi > c_{i1}
	\end{aligned}
	\right.
\end{equation*}
and 
\begin{align*}
 |\theta_{i, \varepsilon}(x,t,\xi)| \leq c_{i2} \ \text{ if } \ |\xi| \leq c_{i1}.
\end{align*}
\end{lemma}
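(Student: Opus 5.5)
Fix $i$ and $(x,t)$. I would first establish the sign conditions and the bound on $[-c_{i1},c_{i1}]$ for $\theta_{i,\varepsilon}$, and then integrate these pointwise facts over $\mathfrak{D}$.

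\emph{Sign conditions.} Set $c_{i1}:=\phi_i+1$, with $\phi_i$ taken from \eqref{thetsupinf}, and let $0<\varepsilon\le 1$. Since $\uprho\in\mathrm{C}_0^\infty(-1,1)$, the kernel $\uprho_\varepsilon$ is supported in $(-\varepsilon,\varepsilon)$. By \eqref{Reg-1},
$$\theta_{i,\varepsilon}(x,t,\xi)=\int_{-\varepsilon}^{\varepsilon}\uprho_\varepsilon(s)\,\theta_i(x,t,\xi-s)\,\d s .$$
If $\xi>c_{i1}$, then every argument satisfies $\xi-s>\phi_i+1-\varepsilon\ge\phi_i$. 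By \eqref{thetsupinf}, $\theta_i(x,t,\xi-s)\ge0$ for a.e. such $s$. Because $\uprho_\varepsilon\ge0$, this gives $\theta_{i,\varepsilon}(x,t,\xi)\ge0$. The case $\xi<-c_{i1}$ is symmetric and gives $\theta_{i,\varepsilon}\le0$.

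\emph{Bound for $|\xi|\le c_{i1}$.} Here the arguments satisfy $|\xi-s|\le c_{i1}+1$. Local boundedness, Hypothesis \ref{hyptheta}(i) with radius $c_{i1}+1$, gives $|\theta_i|\le c(c_{i1}+1)$ a.e. on that range. Since $\int\uprho_\varepsilon=1$, we get $|\theta_{i,\varepsilon}(x,t,\xi)|\le c(c_{i1}+1)=:c_{i2}$. This bound is uniform in $(x,t)$ and in $\varepsilon\le1$.

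\emph{Pointwise lower bound and integration.} From the sign conditions, $\theta_{i,\varepsilon}(x,t,\xi)\,\xi\ge0$ whenever $|\xi|>c_{i1}$. When $|\xi|\le c_{i1}$,
$$\theta_{i,\varepsilon}(x,t,\xi)\,\xi\ge-|\theta_{i,\varepsilon}(x,t,\xi)|\,|\xi|\ge-c_{i1}c_{i2}.$$
So in all cases $\theta_{i,\varepsilon}(x,t,\xi)\,\xi\ge-c_{i1}c_{i2}$. This holds for every $\xi$, and equally for $\xi=\xi(x)$ a measurable function of $x$, which is the form used in the Galerkin estimate. Integrating over $\mathfrak{D}$ and summing over $i=1,\ldots,d$ yields the claimed inequality with right-hand side $-\sum_i c_{i1}c_{i2}|\mathfrak{D}|$.

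The main obstacle is keeping the constants independent of $(x,t)$ and $\varepsilon$. This is exactly what the uniformity built into Hypothesis \ref{hyptheta}(i) and the restriction $\varepsilon\le1$ provide. One should also check that \eqref{thetsupinf} holds with $\phi_i$ uniform in $(x,t)$; if it does not, $\phi_i$ should be replaced by its supremum over $(x,t)$, which the hypothesis as stated implicitly assumes.
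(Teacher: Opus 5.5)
Your proof is correct and follows the standard argument. The paper gives no proof of its own and simply cites \cite[Lemma 7.3.2]{DGDM}; that argument is the same as yours: the support of $\uprho_\varepsilon$ in $(-\varepsilon,\varepsilon)$ carries the sign condition \eqref{thetsupinf} over to $\theta_{i,\varepsilon}$ outside a slightly enlarged interval, local boundedness bounds $\theta_{i,\varepsilon}$ inside it, and the pointwise estimate $\theta_{i,\varepsilon}(x,t,\xi)\,\xi\ge -c_{i1}c_{i2}$ is then integrated over $\mathfrak{D}$. You are also right that the proof needs local boundedness (Hypothesis \ref{hyptheta}(i)) and a $\phi_i$ that is uniform in $(x,t)$, even though the lemma's statement mentions only \eqref{thetsupinf}.
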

Let us now state and prove the existence of weak solutions to Problem \ref{prob-inequality} defined in the sense of Definition \ref{weakdef}.
\begin{theorem}\label{thm 4.3}
Let  $r,q\geq 1$ with $r>q$. Assume that $\theta_i$ satisfies the Hypothesis \ref{hyptheta} for each $i=1,\ldots, d$. Then, for any $\f\in \mathrm{L}^2(0,T;\V^{\prime})$ and $\u^0 \in \H$, the Problem \ref{prob-inequality} admits a weak solution in the sense of Definition \ref{weakdef}.
\end{theorem}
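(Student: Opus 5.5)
We follow the regularized Galerkin scheme of \cite{SMAO} and \cite[Section 7.3]{DGDM}, with the eigenfunctions of the Stokes operator as basis. Let $\{w_k\}$ be the Stokes eigenfunctions. They are orthonormal in $\H$, orthogonal in $\V$, and lie in $\V\cap\wi\L^{r+1}$ by elliptic regularity. Set $\H_n=\mathrm{span}\{w_1,\dots,w_n\}$ and let $P_n$ be the $\H$-orthogonal projection. Take $\varepsilon=\varepsilon_n\to0$ and seek $\u_n(t)=\sum_{k=1}^n c_{nk}(t)w_k$ satisfying, for all $\v\in\H_n$,
\begin{align*}
(\u_n'(t),\v)+\langle\mu\A\u_n+\B(\u_n)+\alpha\wi\C(\u_n)+\beta\C(\u_n),\v\rangle+\sum_{i=1}^d\int_{\mathfrak{D}}\theta_{i,\varepsilon_n}(x,t,\u_{n,i})\v_i\,\d x=\langle\f,\v\rangle,
\end{align*}
with $\u_n(0)=P_n\u^0$. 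Since $\theta_{i,\varepsilon}$ is a mollification in $\xi$, it is smooth and locally Lipschitz in $\xi$. By \eqref{growth} it is measurable in $t$ with at most linear growth. This is therefore a Carath\'eodory ODE system with locally Lipschitz right-hand side, so it has a local solution, which the estimates below extend to $[0,T]$.

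\textbf{Step II (a-priori bounds).} Test with $\u_n$. The convective term vanishes since $\mathtt{b}(\u_n,\u_n,\u_n)=0$. The pumping term is handled by Young's inequality: because $q<r$,
\[
|\alpha|\,\|\u_n\|_{\wi\L^{q+1}}^{q+1}\le \frac{\beta}{2}\|\u_n\|_{\wi\L^{r+1}}^{r+1}+C|\mathfrak{D}|.
\]
The domain term is bounded from below pointwise in $x$ by Lemma \ref{lemma-1}, uniformly in $n$:
\[
\int_0^t\sum_{i=1}^d\int_{\mathfrak{D}}\theta_{i,\varepsilon_n}(x,s,\u_{n,i})\,\u_{n,i}\,\d x\,\d s\ge -T\sum_{i=1}^d c_{i1}c_{i2}|\mathfrak{D}|.
\]
Using this lower bound instead of the growth condition is what avoids any smallness condition on $\mu$. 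We obtain uniform bounds for $\u_n$ in $\mathrm{L}^\infty(0,T;\H)\cap\mathrm{L}^2(0,T;\V)\cap\mathrm{L}^{r+1}(0,T;\wi\L^{r+1})$. From \eqref{growth}, $\boldsymbol{\eta}_n:=(\theta_{i,\varepsilon_n}(\cdot,\u_{n,i}))_i$ is then bounded in $\mathrm{L}^2(0,T;\L^2)$.

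For the time derivative, we use that $P_n$ is uniformly bounded on $\V$ (and is only applied on the $\V'$ part). Then \eqref{bound-B} (or the classical $\V'$ bound for $\B$ when $r<3$, in $d\le3$) gives a bound for $\u_n'$ in $\mathrm{L}^{s}(0,T;\mathbb{D}')$ for some $s>1$, where $\mathbb{D}=\mathbb{H}^m_0$ divergence free, $m$ large. This is possible because $P_n$ is also stable on $\mathrm{D}(\A^{m/2})$.

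\textbf{Step III (passage to the limit).} By Banach--Alaoglu, $\u_n\rightharpoonup\u$ in the above spaces and $\boldsymbol{\eta}_n\rightharpoonup\boldsymbol{\eta}$ in $\mathrm{L}^2(\L^2)$. Theorem \ref{thm-Simon} with $\E_1=\V$, $\E_2=\H$, $\E_3=\mathbb{D}'$ gives $\u_n\to\u$ strongly in $\mathrm{L}^2(0,T;\H)$, and a.e. in $\mathfrak{D}\times(0,T)$ along a subsequence. Lemma \ref{Lem-Lions} then identifies the weak limits of $\C(\u_n)$, $\wi\C(\u_n)$ and $\B(\u_n)$ (the last in the distributional sense, tested against $\mathcal{V}$). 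We then pass to the limit in the integrated form \eqref{3.7} for $\boldsymbol\phi\in\H_k$, extend by density, and obtain Definition \ref{weakdef} with $\g:=-\boldsymbol{\eta}$. The regularity $\u\in\mathcal W$ follows from the equation, and $\u(0)=\u^0$ follows from weak continuity.

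\textbf{Main obstacle.} We must show $\boldsymbol{\eta}_i(x,t)\in\widehat{\theta_i}(x,t,\u_i(x,t))=\partial_{\mathpzc C}j_i(x,t,\u_i(x,t))$ a.e., via \eqref{Equality}. We plan to argue as in \cite[Lemma 7.3.3]{DGDM} and \cite{SMAO}. Fix $\delta>0$. By Egorov, $\u_n\to\u$ uniformly off a set of small measure. For $n$ large, $|\u_{n,i}-\u_i|+\varepsilon_n<\delta$ there, so
\[
\underline{\theta_i}^{\delta}(x,t,\u_i)\le\theta_{i,\varepsilon_n}(x,t,\u_{n,i})\le\overline{\theta_i}^{\delta}(x,t,\u_i).
\]
These inequalities are preserved under weak $\mathrm{L}^2$ limits when tested against nonnegative functions, giving $\underline{\theta_i}^{\delta}\le\boldsymbol\eta_i\le\overline{\theta_i}^{\delta}$ a.e. Letting $\delta\to0$ yields the inclusion. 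The final step converts this into the Clarke inequality \eqref{eqn-clarke}, which gives Problem \ref{prob-inequality}.
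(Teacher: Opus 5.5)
Your proposal is sound and follows the paper's overall scheme: a regularized Galerkin system, the pointwise lower bound of Lemma~\ref{lemma-1} for energy estimates with no restriction on $\mu$, Aubin--Lions compactness with Lemma~\ref{Lem-Lions}, and Egorov's theorem for the multivalued term. Two steps, however, are carried out differently.

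\emph{Time derivative.} The paper's bound \eqref{derbdd2} tests $\frac{\d\u_n}{\d t}$ against arbitrary $\boldsymbol{\Psi}$. This tacitly ignores the projection onto $\H_n$, which is not uniformly bounded on $\V$ or $\wi\L^{r+1}$ for a general basis. It also uses an $\mathrm{L}^4(0,T;\wi\L^4)$ bound on $\u_n$ that the energy estimates do not give when $d=3$, $r<3$. Your choice of Stokes eigenfunctions, with a bound in $\mathrm{L}^s(0,T;\mathbb{D}^{\prime})$, is the standard way to make this step rigorous. Take $\mathbb{D}=\mathrm{D}(\A^{m/2})$ rather than divergence-free $\mathbb{H}^m_0$, since $P_n$ does not preserve the latter; $m=2$ already suffices for $d\le 3$ because $\mathrm{D}(\A)\hookrightarrow\L^\infty$. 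The price is a smooth boundary, which Proposition~\ref{prop-energy} needs anyway.

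\emph{Identifying the limit of $\theta_{i,\varepsilon_n}(\cdot,\u_{n,i})$.} The paper works with the potential. On the Egorov set it bounds difference quotients of $j_i$ near $\u_{n,i}$ by $j_i^0(\cdot,\u_i;\v_i)$ via a Fatou-type argument. It then averages against $\uprho_{\varepsilon_n}$, lets the increment tend to zero to recover $\theta_{i,\varepsilon_n}$, and obtains the integrated Clarke inequality directly. You work with $\theta_i$ itself: the sandwich between the $\delta$-envelopes survives weak limits, giving $\underline{\theta_i}(\cdot,\u_i)\le\boldsymbol{\eta}_i\le\overline{\theta_i}(\cdot,\u_i)$ a.e., and \eqref{Equality} turns this into $\boldsymbol{\eta}_i\in\partial_{\mathpzc{C}}j_i(\cdot,\u_i)$. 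Your route is shorter and avoids the reverse-Fatou and differentiation steps. It does rely on Chang's characterization \eqref{Equality}, and on measurability of $(x,t)\mapsto\underline{\theta_i}^{\delta}(x,t,\u_i(x,t))$, which is of the same type as the result quoted from \cite{SMAO}. The paper's route reaches the $j_i^0$-inequality without passing through the envelopes.

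One caveat, shared with the paper: for $d=3$ and $1\le r<3$ the limit equation only gives $\B(\u)\in\mathrm{L}^{4/3}(0,T;\V^{\prime})$. So your claim that $\u\in\mathcal{W}$ ``follows from the equation'' is not justified in that range. Indeed, membership in $\mathcal{W}$ would yield the energy equality, which the remark following Proposition~\ref{prop-energy} concedes is not available there. For $d=2$, and for $d=3$ with $r\ge 3$, it does follow from \eqref{F1d2}--\eqref{F1d3}.
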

\begin{proof}
We employ the Galerkin method to establish the existence of a solution to Problem \ref{prob-inequality}. The proof proceeds rigorously through the following number of steps:
\vskip 0.2cm
\noindent
\textbf{Step I:} \emph{Regularized finite-dimensional problem.} Let  $\{\w_j\}_{j\in\N}$ be a basis in $\V$, that is, $\{\w_j\}_{j\in\N}$ forms a countable sequence of dense elements in $\V$, finitely many $\{\w_j:j=1,\ldots,n, \ n\in\N\}$ are linearly independent. Since $\V$ is separable, the existence of such basis is guaranteed. By using an Gram-Schmidt orthogonalization process, we may assume that $\{\w_j\}_{j\in\N}$ is an orthonormal basis in $\H$. Let us denote by $\H_n:=\mathrm{span}\{\w_j:j=1,\ldots,n, \ n\in\N\}$, with the norm inherited from $\H$. Let $\u^0_n$ be  the orthogonal projection in $\H$ of $\u^0$ onto the space $\H_n$ such that 
\begin{align*}
	\u^0_n \to \u^0 \text{ in } \H, \text{ as } n \to \infty.
\end{align*}
Let $\{\varepsilon_n\}$ be a sequence of positive numbers converging decreasingly to $0$. For the sake of simplicity, we take $\theta_{i, \varepsilon_n} \equiv \theta_{in} \text{ and } \theta_{in}(x,t,\u_{n,i}(x,t)) \equiv \theta_{in}(t,\u_{n,i}(t))$. Let us now fix $T>0$ and for each $n\in\N$, we search for a approximate solution of the form 
\begin{align*}
	\u_n(x,t):=\sum\limits_{k=1}^n g_k^n(t)\w_k(x),
\end{align*}
such that
%where $g_1^n(t),\ldots,g_k^n(t)$ are unknown scalar functions of $t$ such that it satisfies the following finite-dimensional system of ordinary differential equations in $\H_n$:
\begin{equation}\label{appxode1}
	\left\{
	\begin{aligned}
		\frac{\d}{\d t}\left(\u_n(t),\v\right)&+ (\mu\A\u_n(t)+\B(\u_n(t))+\alpha\wi{\C}(\u_n(t))+\beta\C(\u_n(t)),\v)\\
		&+\sum_{i=1}^{d}\int_{\mathfrak{D}} \theta_{in}(t, \u_{n,i}(t)) \v_i \d x= \langle {\f}(t),\v \rangle, \text{ a.e. } t\in[0,T] \text{ and all } \v \in \H_n, \\
		\u_n(0)&=\u_{n}^0.
	\end{aligned}
	\right.
\end{equation}
That is
\begin{equation}\label{appxode2}
	\left\{
	\begin{aligned}
\sum\limits_{k=1}^n(g_k^n)^{\prime}(t)\left(\w_k,\w_j\right)+\mu\sum\limits_{k=1}^n g_k^n(t)(\A\w_k,\w_j)+
\bigg(\B\big(\sum\limits_{k=1}^n g_k^n(t)&\w_k\big) +\alpha\wi{\C}\big(\sum\limits_{k=1}^n g_k^n(t)\w_k\big) \\+\beta\C\big(\sum\limits_{k=1}^n g_k^n(t)\w_k\big),\w_j\bigg) +\sum_{i=1}^{d}\int_{\mathfrak{D}}\theta_{in}(t, \u_{n,i}(t)) \w_j \d x&= \langle {\f}(t),\w_j\rangle, \ j=1,\ldots,n ,\\
   g_j^n(0)&=\u_{n,j}^0,\ j = 1,\ldots, n,
\end{aligned}
		\right.
	\end{equation}
where $\u_{n,j}^0$ is the $j$-th component of $\u^0_n$ and $g_k^n(t)$, $k=1,\ldots,n$, are unknown scalar functions of $t$. The system \eqref{appxode2} represents a finite-dimensional system of ordinary differential equations in $\H_n$.
Since the operators $\B(\cdot)$, $\C(\cdot)$ and $\wi{\C}(\cdot)$ are locally Lipschitz (see \eqref{lip} and \eqref{213}), an application of Carath\'eodory's existence theorem \cite[pp. 1044]{EZ} guarantees the existence of a local maximal solution $\u_n\in\mathrm{C}([0,T^*];\H_n),$  for some $0<T^*\leq T$ to the system \eqref{appxode2}. The uniqueness of this solution follows directly from the local Lipschitz property. 
\vskip 0.2cm
\noindent
\textbf{Step II:} \emph{A priori estimates for solutions of the Galerkin equation \eqref{appxode1}.}
%	Let $\u_0\in \H$ and $\f\in\mathrm{L}^2(0,T;\V^{\prime})$  be given. Then, for $r\in[1,\infty)$, we have 
%	\begin{align}\label{energy1}
%		\sup_{t\in[0,T]}\|\u^n(t)\|_{\H}^2+\mu \int_0^T\|\u^n(t)\|_{\V}^2\d t+2\beta\int_0^T\|\u^n(t)\|_{\widetilde\L^{r+1}}^{r+1}\d t\leq \|\u_0\|_{\H}^2+\frac{1}{\mu }\int_0^T\|\f(t)\|_{\V'}^2\d t.
%	\end{align}
The time $T^*$ can be extended to $T$ by establishing uniform energy estimates for the solution $\u_n$ of the system \eqref{appxode1}.
From the definition of the operators $\A$, $\B(\cdot)$, $\C(\cdot)$ and $\wi{\C}(\cdot)$, it is observed that
	\begin{align*}
%		(\A_n\u^n,\u^n)&=(\P_n\A\u^n,\u^n)
		(\A\u_n,\u_n)=\|\u_n\|_{\V}^2,
%		(\B_n(\u^n),\u^n)&=(\P_n\B(\u^n),\u^n)=
		(\B(\u_n),\u_n)=0,
%		(\C_n(\u^n),\u^n)&=(\P_n\C(\u^n),\u^n)=
		(\C(\u_n),\u_n)=\|\u_n\|_{\widetilde\L^{r+1}}^{r+1},
	(\wi{\C}(\u_n),\u_n)=\|\u_n\|_{\widetilde\L^{q+1}}^{q+1}.
	\end{align*}
For $\v=\w_k$, multiplying \eqref{appxode1} by $g_k^n(\cdot)$, summing up over $k=1,\ldots,n$ and using the above equations, we obtain
	\begin{align*}
	&\frac{1}{2}\frac{\d}{\d t}\|\u_n(t)\|_{\H}^2+\mu \|\u_n(t)\|_{\V}^2+\alpha\|\u_n(t)\|_{\widetilde\L^{q+1}}^{q+1}+\beta\|\u_n(t)\|_{\widetilde\L^{r+1}}^{r+1}\\
	&\quad+\sum_{i=1}^{d}\int_{\mathfrak{D}} \theta_{in}(t, \u_{n,i}(t)) \u_{n,i}(t) \d x=\langle {\f}(t),\u_n(t)\rangle,
	\end{align*}
	for a.e. $t\in[0,T]$. 
	%	where we performed an integration by parts and used the fact that $(\B(\u^n),\u^n)=0$.
	Integrating the equality from $0$ to $t$, we find 
	\begin{align}\label{415}
		&\|\u_n(t)\|_{\H}^2+2\mu \int_0^t\|\u_n(s)\|_{\V}^2\d s+ 2\beta\int_0^t\|\u_n(s)\|_{\widetilde\L^{r+1}}^{r+1}\d s\nonumber\\&= \|\u_n(0)\|_{\H}^2+2 \int_0^t\langle{\f}(s),\u_n(s)\rangle\d s- 2\alpha\int_0^t\|\u_n(s)\|_{\widetilde\L^{q+1}}^{q+1}\d s \nonumber\\&\quad-2\int_0^t\sum_{i=1}^{d}\int_{\mathfrak{D}} \theta_{in}(s, \u_{n,i}(s)) \u_{n,i}(s)\d x \d s,
	\end{align}
	for all $t\in[0,T]$. Using the Cauchy-Schwarz and Young inequalities, we estimate $|\langle{\f},\u_n\rangle|$ as 
	\begin{align}\label{ineq-2}
		|\langle{\f},\u_n\rangle|\leq \|{\f}\|_{\V^{\prime}}\|\u_n\|_{\V}
		\leq\frac{1}{2\mu }\|{\f}\|_{\V^{\prime}}^2+\frac{\mu }{2}\|\u_n\|_{\V}^2.
	\end{align}
	Now, using H\"older's and Young's inequalities with exponents $\frac{r+1}{q+1}$ and  $\frac{r+1}{r-q}$, we estimate 
\begin{align}\label{ineq-3}
	-2\alpha\|\u_n\|_{\widetilde\L^{q+1}}^{q+1}\leq 2|\alpha| (|\mathfrak{D}|)^{\frac{r-q}{r+1}}\|\u_n\|_{\widetilde\L^{r+1}}^{q+1} \leq \kappa (2|\alpha|)^{\frac{r+1}{r-q}}|\mathfrak{D}|+ \beta \|\u_n\|_{\widetilde\L^{r+1}}^{r+1}, 
\end{align}
where $\kappa= \left(\frac{q+1}{\beta(r+1)}\right)^{\frac{q+1}{r-q}}\left(\frac{r-q}{r+1}\right)$. Thus, using Lemma \ref{lemma-1}, and inequalities \eqref{ineq-2}-\eqref{ineq-3}, we obtain from \eqref{415}  that 
\begin{align}\label{estimate-1}
	&\|\u_n(t)\|_{\H}^2+\mu \int_0^t\|\u_n(s)\|_{\V}^2\d s +\beta\int_0^t\|\u_n(s)\|_{\widetilde\L^{r+1}}^{r+1}\d s\nonumber\\
		&\leq \|\u^0\|_{\H}^2+\frac{1}{\mu }\int_0^t\|\f(s)\|_{\V^{\prime}}^2\d s+\kappa (2|\alpha|)^{\frac{r+1}{r-q}}|\mathfrak{D}|T+ \sum_{i=1}^{d}2c_{i1}c_{i2}|\mathfrak{D}|T,
	\end{align}	
for all $t\in[0,T]$. 
\vskip 0.2cm
\noindent
\emph{ Boundedness of the sequence 
	$\{\theta_{in}(\cdot, \u_{n,i}(\cdot))\}_{n\in\N}$.} 
	From \eqref{growth}, we find
		\begin{align}\label{reg-est-3}
\int_{0}^{T}\int_{\mathfrak{D}}|\theta_{in}(s, \u_{n,i}(s))|^2\d x\d s
&\leq \int_{0}^{T}\int_{\mathfrak{D}} (\bar{\alpha}(x,t)+C_{1,i} + C_{2,i}|\u_{n,i}(s)|)^2 \d x\d s \nonumber\\
	&\leq 3\int_{0}^{T}\int_{\mathfrak{D}} (|\bar{\alpha}(x,t)|^2+C_{1,i}^2+ C_{2,i}^2|\u_{n,i}(s)|^2) \d x\d s\nonumber\\
	&\leq 3(C_0+ C_{1,i}^2|\mathfrak{D}|T+C_{2,i}^2\|\u_{n,i}\|_{\mathrm{L}^2(0,T;\H)}^2),
		\end{align}
where $C_0=\int_{0}^{T}\int_{\mathfrak{D}} |\bar{\alpha}(x,t)|^2\d x\d t$. Finally, from \eqref{reg-est-3}, it follows that the sequence $\{\theta_{in}(\cdot, \u_{n,i}(\cdot))\}_{n\in\N}$ is bounded in $\mathrm{L}^2(0,T;\H).$  
\vskip 0.2cm
\noindent
\emph{ Boundedness of the sequence 
	$\{\mu\A\u_n+\B(\u_n)+\alpha\wi{\C}(\u_n)+\beta \C(\u_n)\}_{n\in\N}$.}
We next claim that the sequence $\{\mathscr{F}(\u_n)\}_{n\in\N}$ is uniformly bounded in $n$ for all $r\geq1$, where 
\begin{align}\label{eqn-nem-f}
	\mathscr{F}(\cdot):=\mu\A+\B(\cdot)+\alpha\wi{\C}(\cdot)+\beta \C(\cdot).
\end{align}
\vskip 2mm
\noindent
From application of H\"older's inequality and \eqref{estimate-1} yields 
\begin{align}\label{Conver-1}
	&\left| \int_0^T \langle \mathscr{F}(\u_n(t)), \boldsymbol{\Psi}(t) \rangle \d t \right| \nonumber\\
&\leq \mu \left|\int_0^T (\nabla \u_n(t), \nabla \boldsymbol{\Psi}(t)) \d t \right|
	+ \left| \int_0^T \langle \B(\u_n(t), \u_n(t)),\boldsymbol{\Psi}(t)\rangle \d t \right|\nonumber\\
	&\quad+ |\alpha| \left| \int_0^T \langle |\u_n(t)|^{q-1} \u_n(t),\boldsymbol{\Psi}(t) \rangle \d t \right|+\beta \left| \int_0^T \langle |\u_n(t)|^{r-1} \u_n(t),\boldsymbol{\Psi}(t) \rangle \d t \right|\nonumber\\
	&\leq \mu \int_0^T \|\nabla \u_n(t)\|_{\H} \|\nabla\boldsymbol{\Psi}(t)\|_{\H} \d t
	+ \int_0^T \|\u_n(t)\|_{\widetilde\L^{4}}
	\|\u_n(t)\|_{\V}
	\|\boldsymbol{\Psi}(t)\|_{\widetilde\L^{4}}\d t \nonumber\\
	&\quad+|\alpha| \int_0^T \|\u_n(t)\|^{q}_{\widetilde\L^{q+1}} \|\boldsymbol{\Psi}(t)\|_{\widetilde\L^{q+1}} \d t+ \beta \int_0^T \|\u_n(t)\|^{r}_{\widetilde\L^{r+1}} \|\boldsymbol{\Psi}(t)\|_{\widetilde\L^{r+1}} \d t \nonumber\\
	&\leq \mu  \|\u_n\|_{\mathrm{L}^2(0,T;\V)}\|\boldsymbol{\Psi}\|_{\mathrm{L}^2(0,T;\V)}
	+ \|\u_n\|_{\mathrm{L}^{4}(0,T;\wi\L^{4})}
	\|\u_n\|_{\mathrm{L}^{2}(0,T;\V)}\|\boldsymbol{\Psi}\|_{\mathrm{L}^{4}(0,T;\wi\L^{4})}
	\nonumber\\
	&\quad+\bigg(|\alpha|\|\u_n\|_{\mathrm{L}^{r+1}(0,T;\widetilde\L^{r+1})}^q+ \beta \|\u_n\|_{\mathrm{L}^{r+1}(0,T;\widetilde\L^{r+1})}^r \bigg) \|\boldsymbol{\Psi}\|_{\mathrm{L}^{r+1}(0,T;\widetilde\L^{r+1})} \nonumber\\
	&\leq C\big( \|\u^0\|_{\H}, \mu, T, |\alpha|, \beta, \|\f\|_{\mathrm{L}^2(0,T;\V^{\prime})} \big)
	\left(\|\boldsymbol{\Psi}\|_{\mathrm{L}^2(0,T;\V)}
	+ \|\boldsymbol{\Psi}\|_{\mathrm{L}^{r+1}(0,T;\widetilde\L^{r+1})}+\|\boldsymbol{\Psi}\|_{\mathrm{L}^{4}(0,T;\wi\L^{4})}\right),
\end{align}
for all $\boldsymbol{\Psi} \in \mathrm{L}^2(0,T;\V) \cap \mathrm{L}^{r+1}(0,T;\widetilde\L^{r+1}) \cap \mathrm{L}^{4}(0,T;\wi\L^{4})$. Note that for $r\in[1,3]$, the above inequality \eqref{Conver-1} holds true for all $\boldsymbol{\Psi} \in \mathrm{L}^2(0,T;\V) \cap \mathrm{L}^{4}(0,T;\wi\L^{4})$ and for $r>3$, the inequality \eqref{Conver-1} holds true for all $\boldsymbol{\Psi} \in \mathrm{L}^2(0,T;\V) \cap \mathrm{L}^{r+1}(0,T;\widetilde\L^{r+1})$.
\vskip 0.2cm
\noindent
 \emph{Boundedness of the derivative term $\frac{\d\u_n}{\d t}$.}
Using H\"older's inequality along with \eqref{reg-est-3}, \eqref{Conver-1}, we find
\begin{align}\label{derbdd2}
	&\left| \int_0^T \left\langle \frac{\d \u_n}{\d t},\boldsymbol{\Psi}(t) \right\rangle \d t \right| \nonumber\\
	&\leq \left| \int_0^T \langle \mathscr{F}(\u_n(t)),\boldsymbol{\Psi}(t) \rangle \d t \right|
	+\left| \int_0^T \langle \f(t),\boldsymbol{\Psi}(t) \rangle \d t \right|+ \left| \int_0^T\sum_{i=1}^{d}\int_{\mathfrak{D}} \theta_{in}(t, \u_{n,i}(t))\boldsymbol{\Psi}(t) \d x \d t \right|\nonumber\\
	&\leq \left| \int_0^T \langle \mathscr{F}(\u_n(t)),\boldsymbol{\Psi}(t) \rangle \d t \right|
	+ \left| \int_0^T \langle \f(t),\boldsymbol{\Psi}(t) \rangle \d t \right| + \sum_{i=1}^{d} \int_0^T \|\theta_{in}(t, \u_{n,i}(t))\|_{\H} \|\boldsymbol{\Psi}(t)\|_{\H} \d t \nonumber\\
	&\leq C\big( \|\u^0\|_{\H}, \mu, T, |\alpha|, \beta, \|\f\|_{\mathrm{L}^2(0,T;\V^{\prime})} \big) \left(\|\boldsymbol{\Psi}\|_{\mathrm{L}^2(0,T;\V)}
	+ \|\boldsymbol{\Psi}\|_{\mathrm{L}^{r+1}(0,T;\widetilde\L^{r+1})}+\|\boldsymbol{\Psi}\|_{\mathrm{L}^{r+1}(0,T;\wi\L^{r+1})}\right)\nonumber\\
	&\quad + \|\f\|_{\mathrm{L}^2(0,T;\V^{\prime})} \|\boldsymbol{\Psi}\|_{\mathrm{L}^2(0,T;\V)} + C \sum_{i=1}^{d} \|\theta_{in}(\cdot, \u_{n,i}(\cdot))\|_{\mathrm{L}^2(0,T;\H)}\|\boldsymbol{\Psi}\|_{\mathrm{L}^2(0,T;\V)} \nonumber\\
	&\leq C\big( \|\u^0\|_{\H}, \mu, T, |\alpha|, \beta, \|\f\|_{\mathrm{L}^2(0,T;\V^{\prime})},|\mathfrak{D}| \big)
	\nonumber\\&\qquad\times
	 \left(\|\boldsymbol{\Psi}\|_{\mathrm{L}^2(0,T;\V)}
	+ \|\boldsymbol{\Psi}\|_{\mathrm{L}^{r+1}(0,T;\widetilde\L^{r+1})}+
	\|\boldsymbol{\Psi}\|_{\mathrm{L}^{r+1}(0,T;\wi\L^{r+1})}\right),
\end{align}
for all $ \boldsymbol{\Psi} \in \mathrm{L}^2(0,T;\V) \cap \mathrm{L}^{r+1}(0,T;\widetilde\L^{r+1}) \cap \mathrm{L}^{4}(0,T;\wi\L^{4})$. 
	\vskip 0.2cm
	\noindent
   \textbf{Step III:}\emph{ Extract weak convergent subsequences}. 
%  We know that the dual of $\mathrm{L}^1(0, T; \H)$ is $\mathrm{L}^{\infty}(0, T; \H)$, that is, $(\mathrm{L}^1(0, T; \H))^{\prime} \cong \mathrm{L}^{\infty}(0, T; \H)$ and the space $\mathrm{L}^1(0, T; \H)$ is separable. Furthermore, the spaces $\mathrm{L}^2(0, T; \V) \text{ and } \mathrm{L}^{r+1}(0, T; \wi\L^{r+1})$ are reflexive. Therefore,
   In view of the uniform energy estimates \eqref{estimate-1}-\eqref{reg-est-3}, 
   \eqref{Conver-1} and \eqref{derbdd2}, along with the Banach-Alaoglu theorem, we have following weak and weak$^*$ convergences:
   \begin{equation}\label{wconvergence}
   	\left\{
   	\begin{aligned}
   	&\u_n \xrightarrow{w^*} \u\ \text{ in }\ \mathrm{L}^{\infty}(0,T;\H), \\
   	&\u_n \xrightarrow{w} \u\ \text{ in }\ \mathrm{L}^{2}(0,T;\V), \\
   	& \u_n \xrightarrow{w} \u\ \text{ in }\ \mathrm{L}^{r+1}(0,T;\widetilde\L^{r+1}), 
   	\end{aligned}
   	\right.
   	\left\{
   	\begin{aligned}
   	& \theta_{in}(\cdot, \u_{n,i}(\cdot)) \xrightarrow{w} \chi_i \text{ in }\ \mathrm{L}^{2}(0,T;\H),\\
   	&\mathscr{F}(\u_n)\xrightarrow{w}\mathscr{F}_0\ \text{ in }\ \mathrm{L}^{2}(0,T;\V^{\prime})+
   	\mathrm{L}^{\frac{r+1}{r}}(0,T;\widetilde\L^\frac{r+1}{r} ),\\
   	&	\frac{\d \u_n}{\d t} \xrightarrow{w} \wi\u \ \text{ in }\ 
   	\mathrm{L}^{2}(0,T;\V^{\prime})+\mathrm{L}^{\frac{r+1}{r}}(0,T;\widetilde\L^{\frac{r+1}{r}}).
   	\end{aligned}\right.
   \end{equation}
 Since 
 \begin{align*}
 {\u}_n &\xrightarrow{w} \u \text{ in } \mathrm{L}^{2}(0,T;\V),\\
 \text{ and } \ 
 \frac{\d \u_n}{\d t} &\xrightarrow{w} \wi\u \ \text{ in } 	\mathrm{L}^{2}(0,T;\V^{\prime})+\mathrm{L}^{\frac{r+1}{r}}(0,T;\widetilde\L^{\frac{r+1}{r}})\hookrightarrow\mathrm{L}^{\frac{r+1}{r}}(0,T;
 \V^{\prime}+\wi\L^{\frac{r+1}{r}})
 \end{align*}
   and the embedding $\V\hookrightarrow\V^{\prime}+\wi\L^{\frac{r+1}{r}}$ is continuous, it follows from \cite[Proposition 1.2, Chapter 1]{JHMMPD} that 
\begin{align*}
	\wi\u=\frac{\d \u}{\d t} \ \text{ for a.e. } \ t\in[0,T].
\end{align*}
Moreover, by an application of the Aubin-Lions lemma (see Theorem \ref{thm-Simon}) and the convergences 
\begin{align*}
{\u}_n \xrightarrow{w} \u \text{ in }
\mathrm{L}^{2}(0,T;\V) \ \text{ and } \ 
\frac{\d \u_n}{\d t} \xrightarrow{w} \frac{\d \u}{\d t} \text{ in } \mathrm{L}^{\frac{r+1}{r}}(0,T;
\V^{\prime}+\wi\L^{\frac{r+1}{r}}),
\end{align*}
yields the following strong convergence:
\begin{align}\label{3p58}
	\u_n&\rightarrow\u\ \text{ in }\ \mathrm{L}^{2}(0,T;\H).
\end{align}
	\vskip 0.2cm
\noindent
\textbf{Step IV:}\emph{ Passing limit as $n\to\infty $ in \eqref{appxode1}}. Let us now discuss the convergence of the operator terms (linear, bilinear and non-linear) involved in the equation \eqref{appxode1}.
\vskip 0.2cm
\noindent
\emph{Convergence of the linear term:} 
Since, the operator $\A$ is linear and continuous from $\mathrm{L}^2(0,T;\V)$ to $\mathrm{L}^2(0,T;\V^{\prime})$, it is in particular weakly continuous. Therefore, from the convergence 
$$\u_n\xrightarrow{w}\u \text{ in } \mathrm{L}^{2}(0,T;\V),$$ 
it follows immediately that for every $\z\in\mathrm{L}^2(0,T;\V)$
\begin{align*}
\int_0^T\langle\A\u_n(t),\z(t)\rangle\d t \to \int_0^T\langle\A\u(t),\z(t)\rangle \d t \text{ as }\ n\to\infty.
\end{align*}
\vskip 0.2cm
\noindent
\emph{Convergence of the bilinear term:} 
Let us take $\z\in\mathrm{C}([0,T];\V)$. Now, by using Ladyzhenskaya's and H\"older's inequalities, estimate \eqref{estimate-1}, and the convergence \eqref{3p58}, we calculate 
\begin{align}
	&\left|\int_0^T\langle\B(\u_n(t)),\z(t)\rangle\d t- \int_0^T\langle\B(\u(t)),\z(t)\rangle\d t\right|
	\nonumber\\
	&\leq\left|\int_0^T\langle\B(\u_n(t)-\u(t),\u_n(t)),\z(t)\rangle\d t\right|+\left|\int_0^T\langle\B(\u(t),\u_n(t)-\u(t)),\z(t)\rangle\d t\right| 
	\nonumber\\
	&=\left|\int_0^T\langle\B(\u_n(t)-\u(t),\z(t)),\u_n(t)\rangle\d t\right|+\left|\int_0^T\langle\B(\u(t),\z(t)),\u_n(t)-\u(t)\rangle\d t\right| \nonumber\\
	&\leq \int_{0}^{T} \|\u_n(t)-\u(t)\|_{\widetilde\L^4}\|\z(t)\|_{\V}\|\u_n(t)\|_{\widetilde\L^4} \d t+\int_{0}^{T} \|\u(t)\|_{\widetilde\L^4}\|\z(t)\|_{\V}\|\u_n(t)-\u(t)\|_{\widetilde\L^4} \d t
	 \nonumber\\ 
	 &\leq \sup_{t\in[0,T]}\|\z(t)\|_{\V} \left(\int_{0}^{T} \|\u_n(t)-\u(t)\|_{\widetilde\L^4}^2 \d t \right)^{\frac{1}{2}} \big[\|\u_n\|_{\mathrm{L}^2(0,T;\wi\L^4)}+
	 \|\u\|_{\mathrm{L}^2(0,T;\wi\L^4)}\big]  \nonumber\\ 
	 &\leq C\sup_{t\in[0,T]}\|\z(t)\|_{\V} \left(\int_{0}^{T} \|\u_n(t)-\u(t)\|_{\H}^{2\left(\frac{4-d}{4}\right)} \|\u_n(t)-\u(t)\|_{\V}^{\frac{2d}{4}} \d t \right)^{\frac{1}{2}} \nonumber\\
	 &\qquad\times\big[\|\u_n\|_{\mathrm{L}^2(0,T;\V)}+
	 \|\u\|_{\mathrm{L}^2(0,T;\V)}\big]
	 \nonumber\\&\leq 
	 2^{\frac{d}{4}}C\sup_{t\in[0,T]}\|\z(t)\|_{\V} \|\u_n-\u\|_{\mathrm{L}^2(0,T;\H)}^{\frac{4-d}{4}}
	  \big[\|\u_n\|_{\mathrm{L}^2(0,T;\V)}+
	  \|\u\|_{\mathrm{L}^2(0,T;\V)}\big]^{\frac{d}{4}+1}
	  \label{bunmbu}\\
	 &\to0\ \text{ as }\ n\to\infty.\nonumber 
\end{align}
Thus, we obtain for all $\z\in\mathrm{C}([0,T];\V),$
\begin{align}\label{eqn-conv-b-3}
\int_0^T\langle\B(\u_n(t)),\z(t)\rangle\d t\to
\int_0^T\langle\B({\u}(t)),\z(t)\rangle\d t  \ \text{ as } \ n\to\infty.
\end{align}
Since $\mathrm{C}([0,T];\V)$ is dense in $\mathrm{L}^{\frac{4}{4-d}}(0,T;\V)$, one can prove that the above convergence holds true for all $\z\in \mathrm{L}^{\frac{4}{4-d}}(0,T;\V)$ also. For this, assume $\z\in \mathrm{L}^{\frac{4}{4-d}}(0,T;\V)$. Then, for any given $\varepsilon>0$, there exists $\z_{\varepsilon}\in \mathrm{C}([0,T];\V)$ such that $$\|\z_{\varepsilon}-\z\|_{\mathrm{L}^{\frac{4}{4-d}}(0,T;\V)}\leq {\varepsilon}.$$ 
Similar to \eqref{bunmbu}, we compute
\begin{align}\label{3.33}
	&\left|\int_0^T\langle\B(\u_n(t))-\B({\u}(t)),\z(t)\rangle
	\d t\right|
	\nonumber\\
	&\leq		
	\left|\int_0^T\langle\B(\u_n(t)-\B({\u}(t)),\z(t)-\z_\eps(t)\rangle 
	\d t\right|+
	\left|\int_0^T\langle\B(\u_n(t))-\B({\u}(t)),\z_\eps(t)\rangle
	\d t\right|
	\nonumber\\
	&\leq \int_0^T\|\u_n(t)\|_{\wi\L^4}^{2}\|\z(t)-\z_{\varepsilon}(t)\|_{\V} \d t+\int_0^T\|\u(t)\|_{\wi\L^4}^{2}\|\z(t)-\z_{\varepsilon}(t)\|_{\V} \d t \nonumber\\
	&\quad+
	\left|\int_0^T\langle\B(\u_n(t))-\B({\u}(t)),\z_\eps(t)\rangle
	\d t\right| 	\nonumber\\
	&\leq 	C\left[\left(\int_0^T\|\u_n(t)\|_{\wi\L^4}^{\frac{8}{d}}\d t\right)^{\frac{d}{4}}+\left(\int_0^T\|\u(t)\|_{\wi\L^4}^{\frac{8}{d}}\d t\right)^{\frac{d}{4}}\right]\left(\int_0^T\|\z(t)-\z_{\varepsilon}(t)\|_{\V}^{\frac{4}{4-d}}\d t\right)^{\frac{4-d}{4}} \nonumber\\
	&\quad+
	\left|\int_0^T\langle\B(\u_n(t))-\B({\u}(t)),\z_\eps(t)\rangle
	\d t\right|.
\end{align}
Since an application of Ladyzheskaya's inequality and the energy estimate \eqref{estimate-1} yield
\begin{align}\label{3.34}
	\int_0^T\|\u_n(t)\|_{\wi\L^4}^{\frac{8}{d}}\d t&\leq C\sup_{t\in[0,T]}\|\u_n(t)\|_{\H}^{\frac{2(4-d)}{d}}\int_0^T\|\u_n(t)\|_{\V}^2\d t\nonumber\\&\leq \left\{ \|\u^0\|_{\H}^2+\frac{1}{\mu }\int_0^t\|\f(s)\|_{\V^{\prime}}^2\d s+\kappa (2|\alpha|)^{\frac{r+1}{r-q}}|\mathfrak{D}|T+ \sum_{i=1}^{d}2c_{i1}c_{i2}|\mathfrak{D}|T\right\}^{\frac{4}{d}}.
\end{align}
Thus, by using \eqref{3.34} and \eqref{eqn-conv-b-3} in \eqref{3.33}, we obtain
\begin{align*}
	&\lim_{n\to\infty}	\left|\int_0^T\langle\B(\u_n(t)),\z(t)\rangle\d t-\int_0^T\langle\B(\u(t)),\z(t)\rangle\d t\right|\nonumber\\&\leq C\left\{ \|\u^0\|_{\H}^2+\frac{1}{\mu }\int_0^t\|\f(s)\|_{\V^{\prime}}^2\d s+\kappa (2|\alpha|)^{\frac{r+1}{r-q}}|\mathfrak{D}|T+ \sum_{i=1}^{d}2c_{i1}c_{i2}|\mathfrak{D}|T \right\}\varepsilon.
\end{align*}
Since $\varepsilon>0$ is arbitrary, we deduce that 
\begin{align*}
	\lim_{n\to\infty}\int_0^T\langle\B(\u_n(t)),\z(t)\rangle\d t=\int_0^T\langle\B(\u(t)),\z(t)\rangle\d t,
\end{align*}
for any $\z\in \mathrm{L}^{\frac{4}{4-d}}(0,T;\V)$.
\vskip 0.2cm
\noindent
\emph{Convergence of the nonlinear term:} We know from \eqref{estimate-1} that 
\begin{align}\label{3p57}
	\int_0^T	\|\C(\u_n(t))\|_{\wi\L^{\frac{r+1}{r}}}^{\frac{r+1}{r}}dt &\leq  \int_0^T\|\u_n(t)\|_{\wi\L^{r+1}}^{r+1}\d t \nonumber\\ &\leq \|\u^0\|_{\H}^2+\frac{1}{\mu }\int_0^t\|\f(s)\|_{\V^{\prime}}^2\d s+\kappa (2|\alpha|)^{\frac{r+1}{r-q}}|\mathfrak{D}|T+ \sum_{i=1}^{d}2c_{i1}c_{i2}|\mathfrak{D}|T.
\end{align}
Therefore, by an application of the Banach-Alaoglu theorem, there exists $\boldsymbol{\eta}\in\mathrm{L}^{\frac{r+1}{r}}(0,T;\wi\L^{\frac{r+1}{r}})$ such that 
\begin{align*}
	\C(\u_n)\xrightarrow{w} \boldsymbol{\eta}\ \text{ in }\ \mathrm{L}^{\frac{r+1}{r}}(0,T;\wi\L^{\frac{r+1}{r}}) .
\end{align*}
Note that from \eqref{3p57}, we have $\{|\u_n(t)|^{r-1}\u_n(t)\}_{n\in\N}, |\u(t)|^{r-1}\u(t) \in \mathrm{L}^{\frac{r+1}{r}}((0,T)\times \mathfrak{D})$ and 
 from the convergence \eqref{3p58}, we can extract a further subsequence (labeled with the same symbol) such that  
\begin{align}\label{3p61}
	\u_n(x,t)\to \u(x,t)\ \text{ for a.e. }\ (x,t)\in \mathfrak{D} \times (0,T). 
\end{align}
Hence, \eqref{3p61} implies that $|\u_{n}(t)|^{r-1}\u_{n}(t) \to |\u(t)|^{r-1}\u(t)$ a.e. in $\mathfrak{D} \times (0,T)$. According to Lemma \ref{Lem-Lions} and \eqref{3p57}, we deduce
\begin{align*}
	|\u_{n}(t)|^{r-1}\u_{n}(t) \xrightarrow{w} |\u(t)|^{r-1}\u(t) \  \mbox{ in } \  \mathrm{L}^{\frac{r+1}{r}}(\mathfrak{D} \times (0,T)).
\end{align*} 
Thus, by the uniqueness of weak limits, we deduce that $\boldsymbol{\eta}=|\u|^{r-1}\u$, which implies that
\begin{align*}
	\C(\u_{n}) \xrightarrow{w} \C(\u) \  \mbox{ in } \  \mathrm{L}^{\frac{r+1}{r}}(0,T;\wi\L^{\frac{r+1}{r}}).
\end{align*}
Analogously, one can also deduce the following:
\begin{align*}
	\wi\C(\u_{n}) \xrightarrow{w} \wi\C(\u) \  \mbox{ in } \  \mathrm{L}^{\frac{q+1}{q}}(0,T;\wi\L^{\frac{q+1}{q}}).
\end{align*}

We now pass to the limit as $n\to\infty$ in \eqref{appxode1}. Consider $\psi(\cdot)$ be a $\mathrm{C}^1([0,T])$ function with  $\psi(T) = 0$. We multiply \eqref{appxode1} by $\psi(\cdot)$ and then do integration by parts, it leads to the equation 
\begin{align}\label{Fin-1}
	-&\int_0^T (\u_n(t), \psi'(t)\v)\d t + \int_0^T (\mathscr{F}(\u_n(t)),\psi(t)\v)\d t 
	+\sum_{i=1}^{d}\int_0^T \int_{\mathfrak{D}} \theta_{in}(t, \u_{n,i}(t)) \v_i \d x \psi(t) \d t \nonumber\\
	&= (\u_n(0),\v)\psi(0) + \int_0^T \langle \f(t), \psi(t)\v\rangle \d t, \text{ for all } \v \in \H_n.
\end{align}
Passing the limit $n\to\infty$ in \eqref{Fin-1} along with the weak convergences \eqref{wconvergence}, we find
\begin{align}\label{Fin-2}
	-&\int_0^T (\u(t), \psi'(t)\v)\d t 
	+ \int_0^T \langle \mathscr{F}(\u(t)), \psi(t)\v \rangle\d t +\sum_{i=1}^{d}\int_0^T \int_{\mathfrak{D}} \chi_i(t) \v_i \d x \psi(t) \d t\nonumber\\
	&= (\u^0,\v)\psi(0) + \int_0^T \langle \f(t), \psi(t)\v \rangle\d t, 
\end{align}
for all $\v =\w_1, \w_2, \ldots.$ By linearity, the above equality holds true for any finite linear combination of $\w_j$'s. Consequently, by a density argument,
% that is there exists a sequence $\{\w_j\}$ such that  $\w_j \in \V_n \subset \V$ and $\w_j \to \w \text{ in } \V$, one can also show that 
\eqref{Fin-2} is true for all $\w \in \V\cap\widetilde{\L}^{r+1}$.  Now, writing \eqref{Fin-2} with 
$\psi\in\mathscr{D}(0,T)$ (test function class), we deduce the following equality in the distributional sense: 
\begin{align}\label{appxode3}
	\frac{\d}{\d t}\big\langle \u(t),\w \big\rangle+\langle \mathscr{F}(\u(t)),\w\rangle +\sum_{i=1}^{d}\int_{\mathfrak{D}} \chi_i (t)\w_i \d x= \langle {\f}(t),\w\rangle,
\end{align}
for all $\w \in \V\cap\widetilde{\L}^{r+1}$ and for a.e. $t\in[0,T]$. Thus, we have shown that $\u$ satisfies the following weak formulation:
\begin{equation}\label{appxode4}
	\left\{
	\begin{aligned}
&\langle \u(t),\w \rangle+ \int_{0}^{t}\langle  \mathscr{F}(\u(s)),\w\rangle \d s +\sum_{i=1}^{d} \int_{0}^{t}\int_{\mathfrak{D}} \chi_i(s) \w_i \d x \d s\\&= \langle\u^0,\w\rangle+\int_{0}^{t}  \langle {\f}(s),\w\rangle \d s, \text{ for all } t\in[0,T] \ \text{ and all } \ \w \in \V\cap\widetilde{\L}^{r+1}.
	\end{aligned}
	\right.
\end{equation}
\vskip 0.2cm
\noindent
\textbf{Step V:} \emph{$\chi_i(t)\in \partial_{\mathpzc{C}} j_i(t,\u_i)$ a.e. in  $\mathfrak{D} \times (0,T).$} From \eqref{3p61}, we have
\begin{align*}
	\u_n(x,t) \to \u(x,t) \text{ a.e. in } \mathfrak{D} \times (0,T).
\end{align*}
Now, by applying Egorov's theorem \cite[Theorem 3.4.2]{CHL}, for every $\vartheta >0$, we can determine $\omega \subset \mathcal{H} = \mathfrak{D} \times (0,T)$ with $\mathcal{L}_{d+1}(\omega)<\vartheta$, where $\mathcal{L}_{d}(\omega)$  denotes the Lebesgue measure of dimension $d$, then we obtain
$$\u_n \to \u \  \text{ uniformly in } \ \mathcal{H}\setminus \omega, $$
with $\u \in \mathrm{L}^\infty(\mathcal{H}\setminus \omega ;\mathbb{R}^d)$. Let $\v \in \mathrm{L}^\infty(\mathcal{H}\setminus \omega ;\mathbb{R}^d)$. Due to Fatou’s lemma, for any $\varepsilon >0,$ there exists $\delta_\varepsilon >0$ and $N_\varepsilon$ such that (for $i=1,2,\ldots,d$)
\begin{align*}
\int_{\mathcal{H} \setminus \omega} 
\frac{j_i(t,\u_{n,i}(t)-\tau + \theta \v_i) - j_i(t,\u_{n,i}(t)-\tau)}{\theta} \d x \d t  \leq  \int_{\mathcal{H} \setminus \omega} j_i^0(t,\u_i(t);\v_i) \d x \d t + \varepsilon,
\end{align*}
provided $n > N_\varepsilon,~ |\tau| < \delta_{\varepsilon}$ and $0 < \theta < \delta_\varepsilon$. Since, $\int_{\mathbb{R}}\uprho_{\varepsilon_n}(s)\d s=1$, thus we have
\begin{align*}
&\int_{\mathbb{R}} \uprho_{\varepsilon_n}(\tau) 
\left( \int_{\mathcal{H} \setminus \omega} 
\frac{ j_i(t, \u_{n,i}(t) - \tau + \theta \v_i) - j_i(t,\u_{n,i}(t)- \tau)}{\theta} 
\d x \d t \right) \d \tau \nonumber\\&\leq 
\int_{\mathcal{H} \setminus \omega} j_i^0(t,\u_i(t); \v_i) \d x \d t + \varepsilon.
\end{align*}
Using the definition of convolution written in \eqref{Reg-1}, we obtain
\begin{align}\label{conver-5}
\int_{\mathcal{H} \setminus \omega} 
&\frac{ \uprho_{\varepsilon_n} * j_i(t,\u_{n,i}(t) + \theta \v_i) 
	- \uprho_{\varepsilon_n} * j_i(t,\u_{n,i}(t))}{\theta} 
\d x \d t \leq  \int_{\mathcal{H} \setminus \omega} j_i^0(t, \u_i(t); \v_i) \d x \d t + \varepsilon.
\end{align}
Taking the limit as $ \theta \to 0$ in above inequality \eqref{conver-5}, we get
\begin{align*}
\int_{\mathcal{H} \setminus \omega} 
[(\uprho_{\varepsilon_n} * j_i)^{\prime}(t,\u_{n,i}(t))] \cdot \v_i	\d x \d t \leq  \int_{\mathcal{H} \setminus \omega} j_i^0(t, \u_i(t); \v_i) \d x \d t + \varepsilon,
\end{align*}
where $'$ denotes the directional derivative. Equivalently by using the properties of mollifiers (see \cite{LCE}), \eqref{eqn-310} and \eqref{Reg-1}, we can write 
\begin{align*}
	\int_{\mathcal{H} \setminus \omega} 
	\theta_{in} (t,\u_{n,i}(t))\v_i	\d x \d t \leq  \int_{\mathcal{H} \setminus \omega} j_i^0(t, \u_i(t); \v_i) \d x \d t + \varepsilon.
\end{align*}
Now, taking the limit $n \to \infty$, it leads  to
$$ \int_{\mathcal{H} \setminus \omega} \chi_i(t) \v_i	\d x \d t \leq  \int_{\mathcal{H} \setminus \omega} j_i^0(t, \u_i(t); \v_i) \d x \d t + \varepsilon.$$
Since $\varepsilon > 0$ is chosen arbitrarily small, we conclude 
$$ \int_{\mathcal{H} \setminus \omega} \chi_i(t) \v_i	\d x \d t \leq  \int_{\mathcal{H} \setminus \omega} j_i^0(t, \u_i(t); \v_i) \d x \d t \ \text{ for all } \ \v_i \in \mathrm{L}^\infty(\mathcal{H}\setminus \omega ;\mathbb{R}).$$
By using the definition of Clarke subdifferential in \eqref{def-gra}, the last inequality implies that 
$$ \chi_i(t)\in \partial_{\mathpzc{C}} j_i(t,\u_i) \ \text{ for a.e.} \ (x,t)\in \mathcal{H} \setminus \omega, $$
with $\mathcal{L}_{d+1}(\omega)<\vartheta$. Now taking into account that $\vartheta$ was chosen arbitrarily, we finally obtain 
$$ \chi_i(t)\in \partial_{\mathpzc{C}} j_i(t,\u_i) \ \text{ a.e. in }\  \mathfrak{D} \times (0,T).$$
Thus, we have 
\begin{align}\label{3.48}
	\sum_{i=1}^{d} \int_{\mathfrak{D}}\chi_i(t)\v_i\d x \leq  \sum_{i=1}^{d} \int_{\mathfrak{D}} j_i^0(t, \u_i(t); \v_i) \d x \ \text{ for all } \ \v\in\V\cap\wi\L^{r+1},\text{ a.e. in }  (0,T).
\end{align}
Hence, by using the inequality \eqref{3.48} in \eqref{appxode3}, we get that the solution $\u$ satifies  Problem \ref{prob-inequality}, that is, 
\begin{align*}
	\frac{\d}{\d t}\big \langle \u(t),\w\big\rangle+\langle \mathscr{F}(\u(t)),\w\rangle +\sum_{i=1}^{d} \int_{\mathfrak{D}} j_i^0(t, \u_i(t); \w_i) \d x \geq \langle {\f}(t),\w\rangle \text{ for a.e. } \ t\in[0,T],
\end{align*}
for all $\w \in \V\cap\widetilde{\L}^{r+1}$.
\vskip 0.2cm
\noindent
\textbf{Step VI:} \emph{Initial data.} Let us multiply \eqref{appxode3} with $\psi$, integrate
with respect to $t$ and integrate by parts the first term to get
\begin{align}\label{Fin-22}
	-&\int_0^T (\u(t), \psi'(t)\v)\d t 
	+ \int_0^T \langle \mathscr{F}(\u(t)), \psi(t)\v \rangle\d t +\sum_{i=1}^{d}\int_0^T \int_{\mathfrak{D}} \chi_i(t) \v_i \d x \psi(t) \d t\nonumber\\
	&= (\u(0),\v)\psi(0) + \int_0^T \langle \f(t), \psi(t)\v \rangle\d t, 
\end{align}
for all $t\in[0,T]$. On comparing \eqref{Fin-22} with \eqref{Fin-2} and choosing $\psi$ in such a way that $\psi(0)=1$, we deduce the following:
\begin{align*}
	\big(\u(0)-\u^0,\v\big)=0 \ \text{ for all } \ \v\in\V\cap\wi\L^{r+1},
\end{align*}
which completes the proof. 
\end{proof}

\begin{proposition}\label{prop-energy}
Under the assumptions of Theorem \ref{thm 4.3}, the weak solution $\u$ of the Problem \ref{prob-inequality} satisfies the energy equality for $r\geq1$ in dimension two and $r\geq 3$ in dimension three. 
\end{proposition}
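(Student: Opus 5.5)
The energy equality to be proved is
\begin{align*}
\|\u(t)\|_{\H}^2+2\mu\int_0^t\|\u(s)\|_{\V}^2\d s+2\alpha\int_0^t\|\u(s)\|_{\wi\L^{q+1}}^{q+1}\d s+2\beta\int_0^t\|\u(s)\|_{\wi\L^{r+1}}^{r+1}\d s\\
+2\sum_{i=1}^d\int_0^t\int_{\mathfrak{D}}\chi_i(s)\u_i(s)\d x\d s=\|\u^0\|_{\H}^2+2\int_0^t\langle\f(s),\u(s)\rangle\d s,
\end{align*}
for all $t\in[0,T]$, where $\chi_i$ is the limit from \eqref{wconvergence}, so that $-\g=\chi$. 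The plan is to test the limit equation \eqref{appxode3} with $\u$ itself. By Theorem \ref{Thm-Abs-cont} this reduces to showing that $\langle \frac{\d\u}{\d t},\u\rangle$ is meaningful, which holds once the time derivative lies in the dual of a space containing $\u$.

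Since $\u$ may lack the regularity to be used directly as a test function, I would work with projections. Let $\mathrm{P}_m$ be the projection onto the span of the first $m$ eigenfunctions of the Stokes operator. Test \eqref{appxode3} with $\mathrm{P}_m\u$, or apply $\mathrm{P}_m$ to the equation, to get
$$\frac12\frac{\d}{\d t}\|\mathrm{P}_m\u\|_{\H}^2+\langle\mathscr{F}(\u),\mathrm{P}_m\u\rangle+\sum_i\int_{\mathfrak{D}}\chi_i(\mathrm{P}_m\u)_i\d x=\langle\f,\mathrm{P}_m\u\rangle.$$
Then integrate over $(0,t)$ and let $m\to\infty$. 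Because $\mathrm{P}_m$ commutes with $\A$, the linear term gives $\int_0^t\|\mathrm{P}_m\u\|_{\V}^2\to\int_0^t\|\u\|_{\V}^2$. The forcing term and the $\chi$ term converge since $\mathrm{P}_m\u\to\u$ in $\mathrm{L}^2(0,T;\V)$ and $\chi_i\in\mathrm{L}^2(0,T;\H)$. The nonlinear terms need $\mathrm{P}_m\u\to\u$ in $\mathrm{L}^{r+1}(0,T;\wi\L^{r+1})$, together with uniform $\wi\L^{r+1}$-boundedness of $\mathrm{P}_m$. This holds for $r+1\le 2^*$ by Sobolev embedding on $\V$. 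For larger $r$ I would invoke the approximation result of \cite{FHR}, which gives eigenspace approximations converging simultaneously in $\V$ and $\wi\L^{r+1}$; failing that, I would mollify in time and use a truncation of the Stokes semigroup.

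The case split is as follows. In 2D, Ladyzhenskaya's inequality gives $\u\in\mathrm{L}^4(0,T;\wi\L^4)$, so $\B(\u)\in\mathrm{L}^2(0,T;\V')$. Hence $\frac{\d\u}{\d t}\in\mathrm{L}^2(0,T;\V')+\mathrm{L}^{\frac{r+1}{r}}(0,T;\wi\L^{\frac{r+1}{r}})$, and the pairing with $\u\in\mathrm{L}^2(0,T;\V)\cap\mathrm{L}^{r+1}(0,T;\wi\L^{r+1})$ is justified for every $r\ge1$. Moreover $\langle\B(\u),\mathrm{P}_m\u\rangle\to\langle\B(\u),\u\rangle=0$. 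In 3D with $r\ge3$, the key estimate is
$$|\langle\B(\u),\v\rangle|\le\|\u\|_{\wi\L^{r+1}}\|\u\|_{\wi\L^{\frac{2(r+1)}{r-1}}}\|\v\|_{\V}.$$
For $r\ge3$ one has $\frac{2(r+1)}{r-1}\le r+1$, so the middle factor interpolates between $\H$ and $\wi\L^{r+1}$. Combined with \eqref{bound-B}, this yields $\B(\u)\in\mathrm{L}^2(0,T;\V')$ whenever $\u\in\mathrm{L}^\infty(0,T;\H)\cap\mathrm{L}^{r+1}(0,T;\wi\L^{r+1})$. In particular $\int_0^T\|\u\|_{\wi\L^{r+1}}^{\frac{2(r+1)}{r-1}}\|\u\|_{\H}^{\frac{2(r-3)}{r-1}}\d t<\infty$ because $\frac{2(r+1)}{r-1}\le r+1$. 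Consequently $\int_0^t\langle\B(\u),\mathrm{P}_m\u\rangle\to\int_0^t\langle\B(\u),\u\rangle=0$. The vanishing of $\mathtt{b}(\u,\u,\u)$ follows from the density argument in the lemma of \cite{MTMS}, which extends $\mathtt{b}$ continuously to these spaces.

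The main obstacle I expect is the convergence of the absorption term $\langle\C(\u),\mathrm{P}_m\u\rangle\to\|\u\|_{\wi\L^{r+1}}^{r+1}$ for large $r$, because $\mathrm{P}_m$ need not be bounded on $\wi\L^{r+1}$. I would handle it with the simultaneous $\V$ and $\wi\L^{r+1}$ approximation mentioned above. Alternatively, I would use Theorem \ref{Thm-Abs-cont} directly with $\X=\wi\L^{r+1}$: that result gives absolute continuity of $\|\u(t)\|_{\H}^2$ and the identity $\frac{\d}{\d t}\|\u\|_{\H}^2=2\langle\frac{\d\u}{\d t},\u\rangle$. Either route, once integrated and combined with $\u(0)=\u^0$ from Step VI, yields the energy equality for all $t\in[0,T]$.
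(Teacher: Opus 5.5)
Your primary route is essentially the paper's. The paper applies the eigenfunction-based operator $\mathcal{P}_{1/n}$ of \cite{FHR} to the integrated equation \eqref{appxode4} and passes to the limit term by term. The dimensional restriction enters exactly through your observation that $\mu\A\u+\B(\u)\in\mathrm{L}^2(0,T;\V')$ for $d=2$, $r\geq1$ and for $d=3$, $r\geq3$ (cf.\ \eqref{F1d2}--\eqref{F1d3}), while $\alpha\wi\C(\u)+\beta\C(\u)\in\mathrm{L}^{\frac{r+1}{r}}(0,T;\wi\L^{\frac{r+1}{r}})$.

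One claim is wrong, though: that the plain spectral projection $\mathrm{P}_m$ suffices for the absorption term when $r+1\leq 2^*$. Sobolev embedding bounds $\|\mathrm{P}_m\u(s)\|_{\wi\L^{r+1}}$ by $C\|\u(s)\|_{\V}$, which is only square integrable in time. Pairing with $\C(\u)\in\mathrm{L}^{\frac{r+1}{r}}(0,T;\wi\L^{\frac{r+1}{r}})$ needs a bound, uniform in $m$, in $\mathrm{L}^{r+1}$ in time. Even after interpolating with $\mathrm{L}^\infty(0,T;\H)$ via Gagliardo--Nirenberg, this closes only for $r\leq3$ when $d=2$ and $r\leq 7/3$ when $d=3$, and uniform $\wi\L^{r+1}$-bounds for $\mathrm{P}_m$ are not available. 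So in three dimensions you need $\mathcal{P}_{1/n}$, with its uniform bounds \eqref{4.50}, for every admissible $r\geq3$ (the critical case included), as the paper does.

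Since $\mathcal{P}_{1/n}$ is self-adjoint and commutes with $\A$ but is not a projection, your identity becomes $\frac12\frac{\d}{\d t}\|\mathcal{P}_{1/n}\u\|_{\H}^2=\langle\frac{\d\u}{\d t},\mathcal{P}_{1/n}^2\u\rangle$. The effective test function is therefore $\mathcal{P}_{1/n}^2\u$, which converges to $\u$ in $\mathrm{L}^2(0,T;\V)\cap\mathrm{L}^{r+1}(0,T;\wi\L^{r+1})$ by \eqref{4.50} and dominated convergence. (Your sign for the $\chi$-term is the one consistent with \eqref{appxode3}; the paper's final display has it flipped, from a sign slip at \eqref{ener-1}.)

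Your fallback, applying a Lions--Magenes type result directly with $\X=\wi\L^{r+1}$ and $p=r+1$, is a genuinely different and shorter route. Once $\frac{\d\u}{\d t}=\f-\chi-\mathscr{F}(\u)$ is known to lie in $\mathrm{L}^2(0,T;\V')+\mathrm{L}^{\frac{r+1}{r}}(0,T;\wi\L^{\frac{r+1}{r}})$, which is exactly what your case split verifies, continuity in $\H$ and the energy equality follow together, with no spatial approximation. There are two caveats.

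First, Theorem~\ref{Thm-Abs-cont} as quoted only gives $\u\in\mathrm{C}([0,T];\H)$. You need its full form: absolute continuity of $t\mapsto\|\u(t)\|_{\H}^2$ with derivative $2\langle\frac{\d\u}{\d t},\u\rangle$, which comes from the same time-mollification proof.

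Second, and more substantively, the decomposition must hold as an identity of functionals on all of $\V\cap\wi\L^{r+1}$, i.e.\ you need \eqref{appxode3} for every test function there. The paper asserts this in Step IV ``by density''. On a bounded domain, however, the density of finite combinations of smooth divergence-free fields (e.g.\ Stokes eigenfunctions) in $\V\cap\wi\L^{r+1}$ for the intersection norm is essentially what \cite{FHR} establishes. The paper's route only tests \eqref{appxode4} against the eigenfunctions $\w_j$ (immediate if the Galerkin basis is the Stokes eigenbasis), and the simultaneous convergence of $\mathcal{P}_{1/n}$ replaces that density argument. The abstract route is cleaner but takes the density as an input.
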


\begin{proof}
	%The energy equality is not immediate due to the {\color{purple}final convergence in \eqref{eqn-conv}}.
	%How we get final convergence on the function $\mathcal{F}(\u_n)$
	To prove the energy equality, we follow the approximation technique given in \cite{FHR}. In \cite{FHR}, the authors constructed approximations of $ \u(\cdot)$ in bounded domains such that the approximating sequence remains bounded and converges simultaneously in both Sobolev and Lebesgue spaces. Following this approach, we approximate $ \u(t)$ for each $t \in [0,T]$ by  employing the finite-dimensional space spanned by the first $ n$ eigen-functions of the Stokes operator (see \cite[Theorem 4.3]{FHR}), which is written as following:
	\begin{align}\label{approx-4}
		\u_n(t) := \mathcal{P}_{1/n}\u(t)= \sum_{\lambda_j < n^2} e^{-\lambda_j/n} \langle \u(t), \w_j \rangle \w_j .
	\end{align}
We point out that $\mathcal{P}_{1/n}$ is a self-adjoint operator but not a projection. 	Now, firstly note that
	\begin{equation}\label{4.46}
		\|\u_n\|_{\H}^2 = \|\mathcal{P}_{1/n}\u\|_{\H}^2 
		= \sum_{\lambda_j < n^2} e^{-2\lambda_j/n} |\langle \u, \w_j \rangle|^2 
		\leq \sum_{j=1}^{\infty} |\langle \u, \w_j \rangle|^2 
		= \|\u\|_{\H}^2 < +\infty, 
	\end{equation}
	for all $\u \in \H$. Moreover, it follows that
	\begin{align}\label{approx-1}
		\|(\mathrm{I}_d - \mathcal{P}_{1/n})\u\|_{\H}^2 
		&= \|\u\|_{\H}^2 - 2\langle \u, \mathcal{P}_{1/n}\u \rangle + \|\mathcal{P}_{1/n}\u\|_{\H}^2 
		\nonumber \\
		&= \sum_{\lambda_j < n^2} \big( 1 - e^{-\lambda_j/n} \big)^2 |\langle \u, \w_j \rangle|^2 
		+ \sum_{\lambda_j \geq n^2} |\langle \u, \w_j \rangle|^2,
	\end{align}
	for all $\u \in \H$. Since the series $\sum_{j=1}^\infty |\langle \u, \w_j \rangle|^2$ is convergent, it implies that the final term on the right-hand side of the equality \eqref{approx-1}, that is 
	\begin{align}\label{approx-2}
		\sum_{\lambda_j \geq n^2} |\langle \u, \w_j \rangle|^2 \to 0 \text{ as } n \to \infty.
	\end{align} 
	Also, the first term on the right-hand side of the equality can be bounded from above by
	\begin{align*}
		\sum_{j=1}^{\infty} \big( 1 - e^{-\lambda_j/n} \big)^2 |\langle \u, \w_j \rangle|^2 
		\leq 4 \sum_{j=1}^{\infty} |\langle \u, \w_j \rangle|^2 
		= 4 \|\u\|_{\H}^2 < +\infty.
	\end{align*}
	Thus, by using the Dominated Convergence Theorem, we obtain
	%we can interchange the limit and sum, and hence we obtain
	\begin{align}\label{approx-3}
		\lim_{n \to \infty} \sum_{j=1}^\infty \big(1 - e^{-\lambda_j/n}\big)^2 
		|\langle \u, \w_j \rangle|^2 
		= \sum_{j=1}^\infty \lim_{n \to \infty} \big(1 - e^{-\lambda_j/n}\big)^2 
		|\langle \u, \w_j\rangle|^2 = 0.
	\end{align}
	Hence, using \eqref{approx-2} and \eqref{approx-3} in \eqref{approx-1}, we get 
	\begin{align}\label{conv-H}
		\|(\mathrm{I}_d - \mathcal{P}_{1/n})\u\|_{\H} \to 0 \text{ as } n \to \infty. 
	\end{align}
	Furthermore, the authors in \cite{FHR} examine the properties of the approximation \eqref{approx-4}, which can be stated as follows:
	\begin{equation}\label{4.50}
		\left\{
		\begin{aligned}
			(1)~& \nabla\cdot\u_n(t)=0 \  \text{ and } \u_n|_{\partial \mathfrak{D}}=0 \text{ for all } t \in [0,T], \\
			(2)~& \lim\limits_{n\to\infty}\|\u_n(t)-\u(t)\|_{\mathbb{H}^1_0(\mathfrak{D})}=0 \text{ with } \|\u_n(t)\|_{\mathbb{H}^1} \leq C  \|\u(t)\|_{\mathbb{H}^1}
			 \text{ for all } t \in [0,T], \\
			(3)~& \lim\limits_{n\to\infty}\|\u_n(t)-\u(t)\|_{\L^p(\mathfrak{D})}=0 \text{ with } \|\u_n(t)\|_{\L^p} \leq C  \|\u(t)\|_{\L^p} \text{ for } p \in (1,\infty),  \\ &\text{ for a.e. } t \in [0,T].
		\end{aligned}
		\right.
	\end{equation}
	It should be noted that for $ d \in \{2,3\} $, 
	$\mathcal{D}(\A) \hookrightarrow \mathbb{H}^2(\mathfrak{D}) \hookrightarrow \L^p(\mathfrak{D})$, 
	for all $p \in (1,\infty)$ (cf. \cite{FHR}). 
	Since $\w_j$'s are the eigenfunctions of the Stokes operator $\A$, thus we obtain that 
	\begin{align*}
		\w_j \in \mathcal{D}(\A) \hookrightarrow \V \text{ and } \w_j \in \mathcal{D}(\A) \hookrightarrow \widetilde{\L}^{r+1}.	
	\end{align*}
	Since $\u \in \mathrm{L}^{r+1}(0,T;\widetilde{\L}^{r+1})$  and the fact obtained from (3), that is, 
	$\|\u_n(t)-\u(t)\|_{\widetilde{\L}^{r+1}} \to 0$, for a.e. $t \in [0,T]$, one can obtain the convergence \eqref{converg-1} (given below) by applying the dominated convergence theorem (with the dominating function $(1+C)\|\u(t)\|_{\widetilde{\L}^{r+1}}$), that is
	\begin{align}\label{converg-1}
		\|\u_n - \u\|_{\mathrm{L}^{r+1}(0,T; \widetilde{\L}^{r+1})} \to 0, \text{ as } n \to \infty.
	\end{align}
	Similarly, the fact that $\u \in\mathrm{L}^2(0,T;\V)$
	and from $(2)$ of \eqref{4.50}, we also have
	\begin{equation}\label{4.51}
		\|\u_n - \u\|_{\mathrm{L}^2(0,T;\V)} \to 0 \text{ as } n \to \infty.
	\end{equation}
	Now, from \eqref{appxode4}, for each $x$ and for all $t \in [0,\infty)$, we have
	\begin{equation*}
		\mathcal{P}_{1/n}\u(t,x) = 	\mathcal{P}_{1/n}\u^0(x) - \int_0^t 	\mathcal{P}_{1/n}\mathscr{F}(\u(s,x))\d s 
		+ \sum_{i=1}^{d} \int_0^t \mathcal{P}_{1/n} \chi_i(s,x) \d s +\int_0^t 	\mathcal{P}_{1/n}\f(s,x)\d s,
	\end{equation*}
where $\mathscr{F}(\cdot)$ is defined in \eqref{eqn-nem-f}.	Note that $	\mathcal{P}_{1/n}\u(\cdot, \cdot)$ is a smooth function in the second variable (since $	\mathcal{P}_{1/n}$ is a finite-dimensional projection). Now, for each $x \in \mathfrak{D}$, we have
	\begin{align}\label{ener-1}
		\frac{\d}{\d t}\mathcal{P}_{1/n}\u(t,x)=-\mathcal{P}_{1/n} \mathscr{F}(\u(t,x))+\sum_{i=1}^{d}\mathcal{P}_{1/n} \chi_i(t,x)+ \mathcal{P}_{1/n}\f(t,x),
	\end{align}
	and taking the inner product on both sides of \eqref{ener-1} with $\mathcal{P}_{1/n}\u(t,x)$, we obtain 
	\begin{align*}
		\frac{\d}{\d t}\mathcal{P}_{1/n}\u(t,x)\cdot\mathcal{P}_{1/n}\u(t,x)&=-\mathcal{P}_{1/n} \mathscr{F}(\u(t,x))\cdot\mathcal{P}_{1/n}\u(t,x)+\sum_{i=1}^{d}\mathcal{P}_{1/n} \chi_i(t,x)\cdot\mathcal{P}_{1/n}\u_i(t,x)\nonumber\\
		&+\mathcal{P}_{1/n}\f(t,x)\cdot\mathcal{P}_{1/n}\u(t,x).
	\end{align*}
	Integrating with respect to $t$ over the interval  $[0,t]$, we deduce 
	\begin{align}\label{ener-2}
		|\mathcal{P}_{1/n}&\u(t,x)|^2 
		= |\mathcal{P}_{1/n}\u^0(x)|^2 
		- 2 \int_0^t 	\mathcal{P}_{1/n}\mathscr{F}(\u(s,x)) \cdot 	\mathcal{P}_{1/n}\u(s,x)\d s \nonumber \\
		&+ 2 \sum_{i=1}^{d} \int_0^t \mathcal{P}_{1/n}\u_i(s,x) \cdot \mathcal{P}_{1/n}\chi_i(s,x) \d s +2\int_0^t \mathcal{P}_{1/n}\u(s,x) \cdot \mathcal{P}_{1/n}\f(s,x)\d s.
	\end{align}
	Now, we integrate \eqref{ener-2} over $\mathfrak{D}$ and obtain
	\begin{align}\label{4.55}
		&\|\mathcal{P}_{1/n} \u(t) \|_{\H}^{2} 
		= \| \mathcal{P}_{1/n} \u^0 \|_{\H}^{2}
		- 2 \int_{\mathfrak{D}} \int_{0}^{t} 
		\mathcal{P}_{1/n} \mathscr{F}(\u(s,x)) \cdot 
		\mathcal{P}_{1/n} \u(s,x) \d s \d x \nonumber \\
		&+ 2 \int_{\mathfrak{D}} \sum_{i=1}^{d}\int_{0}^{t} \mathcal{P}_{1/n} \u_i(s,x) \cdot 
		\mathcal{P}_{1/n} \chi_i(s,x) \d s \d x + 2\int_{\mathfrak{D}}\int_0^t \mathcal{P}_{1/n}\u(s,x) \cdot \mathcal{P}_{1/n}\f(s,x) \d s \d x.
	\end{align}
Now, let us consider $\mathscr{F}(\cdot)= \mathscr{F}^{1}(\cdot)+ \mathscr{F}^{2}(\cdot)$, where 
	\begin{align*}
		\mathscr{F}^{1}(\cdot):=\mu\A+\B(\cdot) \ \text{ and } \ \mathscr{F}^{2}(\cdot):=\alpha\wi{\C}(\cdot)+\beta\C(\cdot).
	\end{align*}
	Note that in $d=2$ for any $r\geq1$,
%	and in $d=3$ for $r=3$ 
by using Cauchy-Schwarz and H\"older's inequalities, we find
	\begin{align}\label{F1d2}
	\left|	\int_0^T\langle\mathscr{F}^{1}(\u(s)),\v(s)\rangle \d s\right|&=
	\left|	\int_0^T \langle\mu\A\u(s)+\B(\u(s)),\v(s)\rangle\d s\right|
		\nonumber\\&\leq
		\mu\int_0^T\|\u(s)\|_{\V}\|\v(s)\|_{\V}\d s+
		\int_0^T\|\u(s)\|_{\wi\L^4}^2\|\v(s)\|_{\V}\d s
		\nonumber\\&\leq
		\mu\|\u\|_{\mathrm{L}^2(0,T;\V)}\|\v\|_{\mathrm{L}^2(0,T;\V)}+
		\|\u\|_{\mathrm{L}^4(0,T;\wi\L^4)}^2\|\v\|_{\mathrm{L}^2(0,T;\V)},
	\end{align}
	for all $\v\in\mathrm{L}^{2}(0,T;\V)$. Moreover, in $d=3$ for $r\geq 3$, by using \eqref{bound-B},
%Cauchy Schwarz and H\"older's inequalities along with the interpolation inequality, 
we estimate
	\begin{align}\label{F1d3}
	\left|	\int_0^T \langle\mathscr{F}^{1}(\u(s)),\v(s)\rangle \d s\right|&=
\left|	\int_0^T \langle\mu\A\u(s)+\B(\u(s)),\v(s)\rangle \d s\right|
	\nonumber\\&\leq
	\mu\int_0^T\|\u(s)\|_{\V}\|\v(s)\|_{\V} \d s+
	\int_0^T\|\u(s)\|_{\widetilde{\L}^{r+1}}^{\frac{r+1}{r-1}}
	\|\u(s)\|_{\H}^{\frac{r-3}{r-1}}\|\v(s)\|_{\V} \d s\nonumber
	\\&\leq
	\mu\|\u\|_{\mathrm{L}^2(0,T;\V)}\|\v\|_{\mathrm{L}^2(0,T;\V)}+
	\|\u\|_{\mathrm{L}^{r+1}(0,T;\wi\L^{r+1})}^{\frac{r+1}{r-1}}
	\|\u\|_{\mathrm{L}^2(0,T;\H)}^{\frac{r-3}{r-1}}\|\v\|_{\mathrm{L}^2(0,T;\V)},
	\end{align}
	for all $\v\in\mathrm{L}^2(0,T;\V)$. 
%	Moreover, for $d=r=3$, from \eqref{F1dr3}, we immediately have 
%	\begin{align}\label{F1dr3e}
%			\int_0^T \langle\mathscr{F}^{1}(\u(s)),\v(s)\rangle \d s&=
%		\int_0^T \langle\mu\A\u(s)+\B(\u(s)),\v(s)\rangle \d s
%		\nonumber\\&\leq
%		\mu\int_0^T\|\u(s)\|_{\V}\|\v(s)\|_{\V} \d s+
%		\int_0^T\|\u(s)\|_{\widetilde{\L}^{4}}^{2}\|\v(s)\|_{\V} \d s
%		\nonumber\\&\leq
%			\mu\|\u\|_{\mathrm{L}^2(0,T;\V)}\|\v\|_{\mathrm{L}^2(0,T;\V)}+
%		\|\u\|_{\mathrm{L}^4(0,T;\wi\L^4)}^2\|\v\|_{\mathrm{L}^2(0,T;\V)},
%		\end{align}
%		for all $\v\in\mathrm{L}^{2}(0,T;\V)$. 
		Thus, from \eqref{F1d2}-\eqref{F1d3}, we conclude 
	\begin{align}\label{F1isf}
		\mathscr{F}^{1}(\u)\in\mathrm{L}^2(0,T;\V^{\prime}),
	\end{align}
	in $d=2$ for any $r\geq1$ and in $d=3$ for any $r\geq3$. Similarly, by the properties of the nonlinear operator $\C(\cdot)$ and $\wi\C(\cdot)$, one can prove 
	\begin{align}\label{F2isf}
		\mathscr{F}^{2}(\u)\in\mathrm{L}^{\frac{r+1}{r}}(0,T;\wi\L^{\frac{r+1}{r}}),
	\end{align}
in $d=2,3$ for any $r\geq1$.

Thus, by combining \eqref{F1isf} and \eqref{F2isf}, we conclude that $\mathscr{F}(\u)= \mathscr{F}^{1}(\u)+ \mathscr{F}^{2}(\u)$, where $	\mathscr{F}^{1}(\u)\in\mathrm{L}^2(0,T;\V^{\prime})$ and $	\mathscr{F}^{2}(\u)\in\mathrm{L}^{\frac{r+1}{r}}(0,T;\wi\L^{\frac{r+1}{r}})$ for $r\geq1$ in $d=2$ and $r\geq3$ in $d=3$. 

Now, by using the Cauchy-Schwarz and H\"older inequalities, and properties $(2)$ and $(3)$ from \eqref{4.50}, we calculate
	\begin{align}\label{4.56}
		&\left|\int_{0}^{t} \int_{\mathfrak{D}} 
		\mathcal{P}_{1/n} \mathscr{F}(\u(s,x))\cdot \mathcal{P}_{1/n} \u(s,x)\d x\d s \right|\nonumber\\
		&= \left|\int_{0}^{t} 
		\langle\mathscr{F}(\u(s)), \mathcal{P}_{1/n} \mathcal{P}_{1/n} \u(s)\rangle\d s
		\right|
		\nonumber\\
		&\le \int_{0}^{t} 
		\|\mathscr{F}^{1}(\u(s))\|_{\V^{\prime}} 
		\|\mathcal{P}_{1/n} \mathcal{P}_{1/n}\u(s)\|_{\V} \d s
		+ \int_{0}^{t} 
		\|\mathscr{F}^{2}(\u(s))\|_{\widetilde\L^\frac{r+1}{r}}
		\|\mathcal{P}_{1/n} \mathcal{P}_{1/n}\u(s)\|_{\widetilde\L^{r+1}} \d s \nonumber\\
		&\le C \int_{0}^{t} 
		\|\mathscr{F}^{1}(\u(s))\|_{\V^{\prime}} 
		\|\mathcal{P}_{1/n}\u(s)\|_{\V} \d s
		+ C \int_{0}^{t} 
		\|\mathscr{F}^{2}(\u(s))\|_{\widetilde\L^\frac{r+1}{r}}
		\|\mathcal{P}_{1/n}\u(s)\|_{\widetilde\L^{r+1}} \d s \nonumber\\
		&\le C\int_{0}^{t} 
		\|\mathscr{F}^{1}(\u(s))\|_{\V^{\prime}} 
		\|\u(s)\|_{\V} \d s
		+ C\int_{0}^{t} 
		\|\mathscr{F}^{2}(\u(s))\|_{\widetilde\L^\frac{r+1}{r}}
		\|\u(s)\|_{\widetilde\L^{r+1}} \d s \nonumber\\
		&\le C 
		\left( \int_{0}^{t} 
		\|\mathscr{F}^{1}(\u(s))\|_{\V^{\prime}}^{2} \d s \right)^{1/2}
		\left( \int_{0}^{t} 
		\|\u(s)\|_{\V}^{2} \d s \right)^{1/2} \nonumber\\
		&\quad + C
		\left( \int_{0}^{t} 
		\|\mathscr{F}^{2}(\u(s))\|_{\widetilde\L^\frac{r+1}{r}}^{\frac{r+1}{r}} \d s \right)^{\frac{r}{r+1}}
		\left( \int_{0}^{t} 
		\|\u(s)\|_{\widetilde\L^{r+1}}^{r+1} \d s \right)^{\frac{1}{r+1}}.
	\end{align}
From \eqref{F1isf}-\eqref{F2isf} along with the fact that $\u \in \mathrm{L}^{2}(0,T;\V) \cap \mathrm{L}^{r+1}(0,T;\widetilde\L^{r+1}),$ the inequality \eqref{4.56} yields
	\begin{align}\label{4.58}
	\left|\int_{0}^{t} \int_{\mathfrak{D}}
	\mathcal{P}_{1/n} \mathscr{F}(\u(s,x))\cdot \mathcal{P}_{1/n} \u(s,x)\d x\d s \right|< +\infty.
	\end{align}
	Similarily, we compute
	\begin{align*}
		\left|\int_0^t \int_{\mathfrak{D}}\mathcal{P}_{1/n}\u(s,x) \cdot \mathcal{P}_{1/n}\f(s,x) \d x \d s \right|
		&= \left|\int_{0}^{t} \langle \mathcal{P}_{1/n} \mathcal{P}_{1/n} \u(s),\f(s) \rangle \d s \right|\nonumber\\
		%&\leq \int_{0}^{t} \int_{\mathfrak{D}}
		%|\f(s,x)| | \mathcal{P}_{1/n}\mathcal{P}_{1/n}\u(s,x) | \d x \d s \nonumber\\
		&\le \int_{0}^{t} 
		\|\f(s)\|_{\V^{\prime}} 
		\|\mathcal{P}_{1/n} \mathcal{P}_{1/n}\u(s)\|_{\V} \d s \nonumber\\
		&\le C \int_{0}^{t} 
		\|\f(s)\|_{\V^{\prime}} 
		\|\mathcal{P}_{1/n}\u(s)\|_{\V} \d s \nonumber\\
		&\le C  \left( \int_{0}^{t}  \|\f(s)\|_{\V^{\prime}}^{2} \d s \right)^{1/2} \left( \int_{0}^{t} 
		\|\u(s)\|_{\V}^{2} \d s \right)^{1/2}.
	\end{align*}
	Since $\f \in \mathrm{L}^2(0,T;\V^{\prime}) \text{ and } \u \in \mathrm{L}^{2}(0,T;\V)$, we get
	\begin{align}\label{4.60}
	\left|\int_0^t \int_{\mathfrak{D}}\mathcal{P}_{1/n}\u(s,x) \cdot \mathcal{P}_{1/n}\f(s,x) \d x \d s \right|< +\infty .
	\end{align} 
	Now, we calculate 
	\begin{align*}
		\left|\int_{0}^{t}\int_{\mathfrak{D}} \mathcal{P}_{1/n} \u_i(s,x) \cdot \mathcal{P}_{1/n} \chi_i(s,x) \d x \d s\right|  
		&= \left|\int_{0}^{t} ( \mathcal{P}_{1/n} \mathcal{P}_{1/n} \u_i(s),\chi_i(s) )\d s
		\right| \nonumber\\
		&\le \int_{0}^{t} \|\mathcal{P}_{1/n} \mathcal{P}_{1/n}\u_i(s)\|_{\H} \|\chi_i(s)\|_{\H} \d s \nonumber\\
		&\le  \int_{0}^{t} \|\u_i(s)\|_{\H} \|\chi_i(s)\|_{\H} \d s \nonumber\\
		&\le  \|\u_i\|_{\mathrm{L}^2(0,T;\H)} \|\chi_i\|_{\mathrm{L}^2(0,T;\H)}.
	\end{align*}
	Since, using the fact that $\mathrm{L}^{\infty}(0,T;\H) \hookrightarrow \mathrm{L}^2(0,T;\H)$, \eqref{4.46}, and $\chi_i \in \mathrm{L}^2(0,T;\H)$ in the above inequality, we obtain 
	\begin{align}\label{4.67}
	\left|\int_{0}^{t}\int_{\mathfrak{D}} \mathcal{P}_{1/n} \u_i(s,x) \cdot \mathcal{P}_{1/n} \chi_i(s,x) \d x \d s\right|  < +\infty.
	\end{align}
	Hence, by using \eqref{4.58}, \eqref{4.60}, \eqref{4.67} and an application of Fubini's theorem (see, for instance, \cite[Theorem 14.1]{RA}), we deduce from \eqref{4.55} that
	\begin{align}\label{4.62}
	&	\|\mathcal{P}_{1/n}\u(t)\|_{\H}^{2}
		= \|\mathcal{P}_{1/n}\u^0\|_{\H}^{2}
		- 2 \int_{0}^{t} \int_{\mathfrak{D}} \mathcal{P}_{1/n}\mathscr{F}(\u(s,x)) \cdot \mathcal{P}_{1/n}\u(s,x) \d x \d s \nonumber \\
		&\quad+2\int_{0}^{t}\sum_{i=1}^{d} \int_{\mathfrak{D}} \mathcal{P}_{1/n} \u_i(s,x) \cdot \mathcal{P}_{1/n} \chi_i(s,x) \d x \d s +2\int_0^t \int_{\mathfrak{D}}\mathcal{P}_{1/n}\u(s,x) \cdot \mathcal{P}_{1/n}\f(s,x) \d x \d s \nonumber\\
		& = \|\mathcal{P}_{1/n}\u^0\|_{\H}^{2}
		- 2 \int_{0}^{t} \langle \mathscr{F}(\u(s)), \mathcal{P}_{1/n} \mathcal{P}_{1/n}\u(s) \rangle \d s \nonumber \\
		&\quad+2\int_{0}^{t}\sum_{i=1}^{d} (\chi_i(s),\mathcal{P}_{1/n} \mathcal{P}_{1/n} \u_i(s)) \d s +2\int_0^t \langle \f(s),\mathcal{P}_{1/n}\mathcal{P}_{1/n}\u(s) \rangle \d s,
	\end{align}
	for all $t\in[0,T]$. Next we pass the limit as $n\to\infty$ in \eqref{4.62}. We first show the convergence
	 $$\left| \int_0^t \langle \mu\A \u(s)+\B(\u(s))+\alpha\wi{\C}(\u(s))+\beta \C(\u(s)), \mathcal{P}_{1/n} \mathcal{P}_{1/n} \u(s)- \u(s) \rangle \d s \right|\to 0, \text{ as } n \to \infty.$$ 
	 To do this, we first compute
\begin{align}\label{4.63}
	& \left| \int_0^t \langle \A \u(s), \mathcal{P}_{1/n} \mathcal{P}_{1/n} \u(s) -\u(s) \rangle \d s \right| \nonumber\\
		&=  \left| \int_0^t 
		\langle \A \u(s), \mathcal{P}_{1/n}(\mathcal{P}_{1/n}\u(s) - \u(s)) + \mathcal{P}_{1/n}\u(s) - \u(s) \rangle \d s \right| \nonumber\\
		& \leq \|\u\|_{\mathrm{L}^2(0,T;\V)}
		\left( \|\mathcal{P}_{1/n}(\mathcal{P}_{1/n}\u - \u)\|_{\mathrm{L}^2(0,T;\V)} 
		+ \|\mathcal{P}_{1/n}\u - \u\|_{\mathrm{L}^2(0,T;\V)} \right) \nonumber\\
		&\leq \| \u\|_{\mathrm{L}^2(0,T;\V)}	(C\|\mathcal{P}_{1/n}\u - \u \|_{\mathrm{L}^2(0,T;\V)} +  \|\mathcal{P}_{1/n}\u - \u \|_{\mathrm{L}^2(0,T;\V)}) \nonumber\\
				&  \to 0, \text{ as } n \to \infty,
	\end{align}
where we have used \eqref{4.51}. Now, by using \eqref{converg-1}, we estimate
\begin{align}\label{4.6}
	& \left| \int_0^t \langle \C (\u(s)), \mathcal{P}_{1/n} \mathcal{P}_{1/n} \u(s) -\u(s) \rangle \d s \right| \nonumber\\
	&=  \left| \int_0^t 
	\langle |\u(s)|^{r-1}\u(s), \mathcal{P}_{1/n}(\mathcal{P}_{1/n}\u(s) - \u(s)) + \mathcal{P}_{1/n}\u(s) - \u(s) \rangle \d s \right| \nonumber\\
	& \leq \|\u\|_{\mathrm{L}^{\frac{r+1}{r}}(0,T;\widetilde\L^{\frac{r+1}{r}})}
	\left( \|\mathcal{P}_{1/n}(\mathcal{P}_{1/n}\u - \u)\|_{\mathrm{L}^{r+1}(0,T;\widetilde\L^{r+1})} 
	+ \|\mathcal{P}_{1/n}\u - \u\|_{\mathrm{L}^{r+1}(0,T;\widetilde\L^{r+1})} \right) \nonumber\\
	&\leq	(C+1)\|\u\|_{\mathrm{L}^{\frac{r+1}{r}}(0,T;\widetilde\L^{\frac{r+1}{r}})} \|\mathcal{P}_{1/n}\u - \u\|_{\mathrm{L}^{r+1}(0,T;\widetilde\L^{r+1})} \nonumber\\
	&  \to 0, \text{ as } n \to \infty,
\end{align}
Similar to the calculations as in  \eqref{4.6}, we estimate 
\begin{align*}
	& \left| \int_0^t \langle \wi{\C} (\u(s)), \mathcal{P}_{1/n} \mathcal{P}_{1/n} \u(s) -\u(s) \rangle \d s \right| \nonumber\\
	&=  \left| \int_0^t 
	\langle |\u(s)|^{q-1}\u(s), \mathcal{P}_{1/n}(\mathcal{P}_{1/n}\u(s) - \u(s)) + \mathcal{P}_{1/n}\u(s) - \u(s) \rangle \d s \right| \nonumber\\
	&\leq	(C+1)\|\u\|_{\mathrm{L}^{\frac{r+1}{r}}(0,T;\widetilde\L^{\frac{r+1}{r}})} \|\mathcal{P}_{1/n}\u - \u\|_{\mathrm{L}^{q+1}(0,T;\widetilde\L^{q+1})} \nonumber\\
	&\leq	(C+1)\|\u\|_{\mathrm{L}^{\frac{r+1}{r}}(0,T;\widetilde\L^{\frac{r+1}{r}})} \|\mathcal{P}_{1/n}\u - \u\|_{\mathrm{L}^{r+1}(0,T;\widetilde\L^{r+1})}\nonumber\\
	&  \to 0 \  \text{ as } \ n \to \infty,
\end{align*}
where we have used the fact that $\mathrm{L}^{r+1}(0,T;\widetilde\L^{r+1})\hookrightarrow \mathrm{L}^{q+1}(0,T;\widetilde\L^{q+1})$ for $q < r$ and the convergence \eqref{converg-1}. Now, we discuss the convergence for the bilinear term for different values of $r$ written as follows:
\vskip 2mm
\noindent
\textbf{Case I:} \emph{For $r\geq1$ in $d=2$}. By using H\"older's, Ladyzhenskaya's inequalities and \eqref{4.51}, we compute
\begin{align*}
& \left| \int_0^t \langle \B(\u(s)), \mathcal{P}_{1/n} \mathcal{P}_{1/n} \u(s) -\u(s) \rangle \d s \right| \nonumber\\
&=  \left| \int_0^t 
\langle \B(\u(s)), \mathcal{P}_{1/n}(\mathcal{P}_{1/n}\u(s) - \u(s)) + \mathcal{P}_{1/n}\u(s) - \u(s) \rangle \d s \right| \nonumber\\
&=  \left| \int_0^t 
\langle \B(\u(s)), \mathcal{P}_{1/n}(\mathcal{P}_{1/n}\u(s) - \u(s))\rangle \d s \right| +\left| \int_0^t 
\langle \B(\u(s)), \mathcal{P}_{1/n}\u(s) - \u(s) \rangle \d s \right| \nonumber\\
&\leq \int_{0}^{t}\|\u(s)\|_{\wi\L^4}^2\|\mathcal{P}_{1/n}(\mathcal{P}_{1/n}\u(s) - \u(s))\|_{\V} \d s + \int_{0}^{t}\|\u(s)\|_{\wi\L^4}^2\|\mathcal{P}_{1/n}\u(s) - \u(s)\|_{\V} \d s \nonumber\\
&\leq \int_{0}^{t} \|\u(s)\|_{\H}\|\u(s)\|_{\V} \|\mathcal{P}_{1/n}(\mathcal{P}_{1/n}\u(s) - \u(s))\|_{\V} \d s + \int_{0}^{t} \|\u(s)\|_{\H}\|\u(s)\|_{\V}\|\mathcal{P}_{1/n}\u(s) - \u(s)\|_{\V} \d s\nonumber\\
&\leq (C+1)\|\u\|_{\mathrm{L}^{\infty}(0,T;\H)}\|\u\|_{\mathrm{L}^{2}(0,T;\V)} \|\mathcal{P}_{1/n}\u - \u\|_{\mathrm{L}^{2}(0,T;\V)} \nonumber\\
	&  \to 0, \text{ as } n \to \infty.
\end{align*}
\textbf{Case II:} \emph{For $r\geq 3$ in $d=3$}. By using H\"older's and interpolation inequalities, and \eqref{converg-1}, we find
\begin{align*}
& \left| \int_0^t \langle \B(\u(s)), \mathcal{P}_{1/n} \mathcal{P}_{1/n} \u(s) -\u(s) \rangle \d s \right| \nonumber\\
&=  \left| \int_0^t 
\langle \B(\u(s)), \mathcal{P}_{1/n}(\mathcal{P}_{1/n}\u(s) - \u(s))\rangle \d s \right| +\left| \int_0^t 
\langle \B(\u(s)), \mathcal{P}_{1/n}\u(s) - \u(s) \rangle \d s \right| \nonumber\\
&\leq (C+1)\int_0^t\|\u(s)\|_{\wi\L^{\frac{2(r+1)}{r-1}}}\|\u(s)\|_{\V}\|\mathcal{P}_{1/n}\u(s) - \u(s)\|_{\wi\L^{r+1}}\d s  \nonumber\\
&\leq (C+1) \int_0^t \|\u(s)\|_{\H}^{\frac{r-3}{r-1}}\|\u(s)\|_{\wi\L^{r+1}}^{\frac{2}{r-1}}\|\u(s)\|_{\V}
\|\mathcal{P}_{1/n}\u(s) - \u(s)\|_{\wi\L^{r+1}}\d s 
\nonumber\\&\leq 
(C+1)T^{\frac{r-3}{2(r-1)}}\|\u\|_{\mathrm{L}^{\infty}(0,T;\H)}^{\frac{r-3}{r-1}}
\|\u\|_{\mathrm{L}^{r+1}(0,T;\wi\L^{r+1})} \|\u\|_{\mathrm{L}^{2}(0,T;\V)}\|\mathcal{P}_{1/n}\u - \u\|_{\mathrm{L}^{r+1}(0,T;\wi\L^{r+1})} 
\nonumber\\& \to 0 \ \text{ as } \ n \to \infty.
\end{align*}
%Also, for $r=3$, we have
%\begin{align*}
%& \left| \int_0^t \langle \B(\u(s)), \mathcal{P}_{1/n} \mathcal{P}_{1/n} \u(s) -\u(s) \rangle \d s \right| \nonumber\\
%&=  \left| \int_0^t 
%\langle \B(\u(s)), \mathcal{P}_{1/n}(\mathcal{P}_{1/n}\u(s) - \u(s))\rangle \d s \right| +\left| \int_0^t 
%\langle \B(\u(s)), \mathcal{P}_{1/n}\u(s) - \u(s) \rangle \d s \right| \nonumber\\
%&\leq (C+1)\int_0^t\|\u(s)\|_{\wi\L^{4}}\|\u(s)\|_{\V}\|\mathcal{P}_{1/n}\u(s) - \u(s)\|_{\wi\L^{4}}\d s 
%\nonumber\\&\leq 
%	(C+1)
%	\|\u\|_{\mathrm{L}^{4}(0,T;\wi\L^{4})} \|\u\|_{\mathrm{L}^{2}(0,T;\V)}\|\mathcal{P}_{1/n}\u - \u\|_{\mathrm{L}^{4}(0,T;\wi\L^{4})} 
%\nonumber\\&\to 0, \text{ as } n \to \infty.
%\end{align*}
Moreover, by using \eqref{4.51}, we get
	\begin{align}\label{4.64}
		&\left| \int_{0}^{t} \langle \f(s), \mathcal{P}_{1/n} \mathcal{P}_{1/n} \u(s) - \u(s) \rangle \d s \right| \nonumber\\
		&= \left| \int_0^t 
		\langle \f(s) , \mathcal{P}_{1/n}(\mathcal{P}_{1/n}\u(s) - \u(s)) + \mathcal{P}_{1/n}\u(s) - \u(s) \rangle \d s \right| \nonumber\\
		& \leq \|\f\|_{\mathrm{L}^2(0,T;\V^{\prime})}
		\left( \|\mathcal{P}_{1/n}(\mathcal{P}_{1/n}\u - \u)\|_{\mathrm{L}^2(0,T;\V)} 
		+ \|\mathcal{P}_{1/n}\u - \u\|_{\mathrm{L}^2(0,T;\V)} \right) \nonumber\\
		&\leq \| \f\|_{\mathrm{L}^2(0,T;\V^{\prime})}	(C\|\mathcal{P}_{1/n}\u - \u \|_{\mathrm{L}^2(0,T;\V)} +  \|\mathcal{P}_{1/n}\u - \u \|_{\mathrm{L}^2(0,T;\V)}) \nonumber\\
		& \to 0\  \text{ as } \ n \to \infty.
	\end{align}
	Similarily, one can show that 
	\begin{align}\label{4.66}
		\left|\int_{0}^{t} (\chi_i(s),\mathcal{P}_{1/n} \mathcal{P}_{1/n} \u_i(s) - \u_i(s)) \d s \right| \to 0\  \text{ as } \ n \to \infty.
	\end{align}
	By passing the limit $n \to \infty$ in \eqref{4.62} and using \eqref{conv-H}, \eqref{4.63}, \eqref{4.64} and \eqref{4.66}, we immediately obtain the following energy equality:
	\begin{align*}
		\|\u(t)\|_{\H}^{2}
		&= \|\u^0\|_{\H}^{2}
		- 2 \int_{0}^{t} \langle \mathscr{F}(\u(s)), \u(s) \rangle \d s +2\sum_{i=1}^{d} \int_{0}^{t} \int_{\mathfrak{D}}\chi_i(s)\u_i(s)\d x \d s 
		\nonumber\\&\quad+2\int_0^t \langle \f(s),\u(s) \rangle \d s,
	\end{align*}
	for all $t \in [0,T]$. 
\end{proof}

\begin{remark}
Note that for $r\in[1,3)$ in $d=3$, the inequality \eqref{F1d3} no longer holds. Consequently, it follows from \eqref{F1isf} that $\mathscr{F}^1(\u)\notin\mathrm{L}^2(0,T;\V^{\prime})$ and therefore the relation \eqref{4.56} fails. As a result, the energy equality cannot be justified for $r\in[1,3)$  in three dimensions.
\end{remark}

We now turn to the question of the uniqueness of solutions. Unlike existing results for evolution boundary type hemivariational NSE problems (for instance see \cite{SMAON}), which only address existence, the presence of an absorption term $\beta|\u|^{r-1}\u$ in our model \eqref{eqn-dom} enables us to establish uniqueness. In particular, for $r>3,$ the additional dissipative effect of the absorption term plays a crucial role in controlling the nonlinearities and leads to the uniqueness of weak solutions.
\begin{theorem}\label{lemma4.6}
Let $\theta_i : \mathfrak{D} \times (0,T) \times \mathbb{R} \to \mathbb{R}$ satisfy  the Hypothesis \ref{hyptheta} and assume that the following condition holds:
\begin{align}\label{theta-Cond}
\essinf\limits_{\xi_1 \ne \xi_2} 
\frac{\theta_i(x,t,\xi_1) - \theta_i(x,t,\xi_2)}{\xi_1 - \xi_2} 
\geq -\mathcal{K} \ \text{ for all } \ (x,t) \in \mathfrak{D} \times (0,T)
\end{align}
with $\mathcal{K} > 0$. Then, for $r\geq1$ in dimension $d=2$, for $r>3$ in dimension $d=3$, and in the case $d=r=3$ provided that $2\beta\mu >1$, the Problem \ref{prob-inequality} admits a unique solution.
\end{theorem}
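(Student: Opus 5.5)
Existence is already given by Theorem \ref{thm 4.3}, so only uniqueness needs proof. We take two weak solutions $\u_1,\u_2$ with the same data $\u^0,\f$ and selections $\chi^{(1)},\chi^{(2)}$, where $\chi^{(k)}_i\in\partial_{\mathpzc{C}}j_i(\cdot,\u_{k,i})=\widehat{\theta_i}(\cdot,\u_{k,i})$ a.e. (Step V of Theorem \ref{thm 4.3} and \eqref{Equality}). Set $\w=\u_1-\u_2$. Subtracting the two equations \eqref{appxode3} gives an equation for $\w$. In the ranges considered ($r\ge1$ for $d=2$, $r\ge3$ for $d=3$), Proposition \ref{prop-energy} and the bounds \eqref{F1isf}--\eqref{F2isf} show that $\mathscr{F}(\u_k)\in\mathrm{L}^2(0,T;\V')+\mathrm{L}^{\frac{r+1}{r}}(0,T;\wi\L^{\frac{r+1}{r}})$. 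The same $\mathcal{P}_{1/n}$ argument therefore justifies testing the difference equation with $\w$, which gives
\begin{align*}
\frac12\|\w(t)\|_{\H}^2+\mu\int_0^t\|\w\|_{\V}^2\d s+\beta\int_0^t\langle\C(\u_1)-\C(\u_2),\w\rangle\d s
&=-\int_0^t\langle\B(\u_1)-\B(\u_2),\w\rangle\d s-\alpha\int_0^t\langle\wi\C(\u_1)-\wi\C(\u_2),\w\rangle\d s\\
&\quad-\sum_{i=1}^d\int_0^t\int_{\mathfrak{D}}(\chi^{(1)}_i-\chi^{(2)}_i)\w_i\d x\d s.
\end{align*}

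The hemivariational term is handled by \eqref{theta-Cond}. It says that $\xi\mapsto\theta_i(x,t,\xi)+\mathcal{K}\xi$ is essentially nondecreasing, so its filled-in graph $\widehat{\theta_i}+\mathcal{K}\,\mathrm{id}$ is monotone. Hence $(\chi^{(1)}_i-\chi^{(2)}_i)\w_i\ge-\mathcal{K}|\w_i|^2$ pointwise, and the last term is at most $\mathcal{K}\int_0^t\|\w\|_{\H}^2$.

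The pumping term is handled as follows. The mean value inequality gives $\big||\u_1|^{q-1}\u_1-|\u_2|^{q-1}\u_2\big|\le q(|\u_1|^{q-1}+|\u_2|^{q-1})|\w|$. Young's inequality then gives $|u|^{q-1}\le\varepsilon|u|^{r-1}+C_\varepsilon$, valid since $q<r$ (for $q=1$ the term is simply linear). This bounds the pumping term by $\frac{\beta}{4}\big(\||\u_1|^{\frac{r-1}{2}}\w\|_{\H}^2+\||\u_2|^{\frac{r-1}{2}}\w\|_{\H}^2\big)+C\|\w\|_{\H}^2$. The first part is absorbed using Lemma \ref{Cmono1}, which supplies $\frac{\beta}{2}$ times the same two weighted quantities on the left.

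The convective term uses $\langle\B(\u_1)-\B(\u_2),\w\rangle=\mathtt{b}(\w,\u_2,\w)=-\mathtt{b}(\w,\w,\u_2)$, and is treated case by case.
\begin{itemize}
\item[(a)] For $d=2$, Ladyzhenskaya's inequality gives $|\mathtt{b}(\w,\u_2,\w)|\le\|\w\|_{\wi\L^4}^2\|\u_2\|_{\V}\le C\|\w\|_{\H}\|\w\|_{\V}\|\u_2\|_{\V}\le\frac{\mu}{2}\|\w\|_{\V}^2+\frac{C}{\mu}\|\u_2\|_{\V}^2\|\w\|_{\H}^2$. The coefficient $\|\u_2\|_{\V}^2$ lies in $\mathrm{L}^1(0,T)$.
\item[(b)] For $d=3$, $r>3$, we bound $|\mathtt{b}(\w,\w,\u_2)|\le\|\w\|_{\V}\||\u_2|\w\|_{\H}\le\frac{\mu}{2}\|\w\|_{\V}^2+\frac{1}{2\mu}\int|\u_2|^2|\w|^2$. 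We split $|\u_2|^2\le\varepsilon|\u_2|^{r-1}+C_\varepsilon$ by Young's inequality with exponents $\frac{r-1}{2},\frac{r-1}{r-3}$. The first piece is absorbed into $\frac{\beta}{4}\||\u_2|^{\frac{r-1}{2}}\w\|_{\H}^2$ by taking $\varepsilon$ small, leaving $C\|\w\|_{\H}^2$.
\item[(c)] For $d=r=3$ there is no room to split, and $\frac{1}{2\mu}\||\u_2|\w\|_{\H}^2$ must be absorbed directly into the monotonicity term $\frac{\beta}{2}\||\u_2|\w\|_{\H}^2$. This is the main obstacle, since it requires $\frac{1}{2\mu}<\frac{\beta}{2}$, i.e. $\beta\mu>1$, while the hypothesis is only $2\beta\mu>1$. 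To sharpen, I would try first to use the full strength of the damping: the pointwise identity $(|a|^2a-|b|^2b)\cdot(a-b)\ge\frac12(|a|^2+|b|^2)|a-b|^2$ together with the symmetrization $\mathtt{b}(\w,\w,\u_2)=\mathtt{b}(\w,\w,\frac{\u_1+\u_2}{2})$, valid since $\mathtt{b}(\w,\w,\w)=0$. This gives $\||\frac{\u_1+\u_2}{2}|\w\|_{\H}^2\le\frac12(\||\u_1|\w\|^2+\||\u_2|\w\|^2)$. Combined with a more generous Young split of $\mu\|\w\|_{\V}^2$, the absorption then closes under $2\beta\mu>1$, with the leftover pumping contribution taken from a small fraction of the damping.
\end{itemize}

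In every case we arrive at $\|\w(t)\|_{\H}^2\le\int_0^t\gamma(s)\|\w(s)\|_{\H}^2\d s$ with $\gamma\in\mathrm{L}^1(0,T)$ and $\w(0)=\mathbf{0}$. Gronwall's inequality then gives $\w\equiv\mathbf{0}$, which proves uniqueness.
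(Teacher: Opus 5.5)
Your argument is correct and is essentially the paper's proof. The shared steps are the energy equality for $\w=\u_1-\u_2$ via the $\mathcal{P}_{1/n}$ argument of Proposition \ref{prop-energy}, and the one-sided bound $\sum_i\int_{\mathfrak{D}}(\chi^{(1)}_i-\chi^{(2)}_i)\w_i\,\d x\geq-\mathcal{K}\|\w\|_{\H}^2$ obtained from \eqref{theta-Cond}. They also include Lemma \ref{Cmono1} for the damping, Young absorption of the pumping term, a case-by-case treatment of the convective term, and Gr\"onwall.

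Two small differences are worth noting. In $d=2$ you put the gradient on $\u_2$ and get the Gr\"onwall weight $\|\u_2\|_{\V}^2$. The paper instead bounds the term by $\|\w\|_{\wi\L^4}\|\w\|_{\V}\|\u_2\|_{\wi\L^4}$ and uses $\|\u_2\|_{\wi\L^4}^4\in\mathrm{L}^1(0,T)$; both work.

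In the critical case, the obstacle $\beta\mu>1$ is only an artifact of reusing the $\frac{\mu}{2}$ split from case (b). The paper spends the whole viscous term, $|\mathtt{b}(\w,\w,\u_2)|\leq\mu\|\w\|_{\V}^2+\frac{1}{4\mu}\||\u_2|\w\|_{\H}^2$. Against the $\frac{\beta}{2}\||\u_2|\w\|_{\H}^2$ supplied by Lemma \ref{Cmono1} (in your normalization), this closes exactly under $2\beta\mu>1$, and the strict inequality leaves room for the pumping term.

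Your symmetrization $\mathtt{b}(\w,\w,\u_2)=\mathtt{b}(\w,\w,\tfrac{\u_1+\u_2}{2})$ is therefore not needed for the theorem as stated. It is, however, valid and gives more. It spreads the convective term over both weighted damping quantities via $\||\tfrac{\u_1+\u_2}{2}|\w\|_{\H}^2\leq\frac12\big(\||\u_1|\w\|_{\H}^2+\||\u_2|\w\|_{\H}^2\big)$. With the full Young split this requires only $\frac{1}{8\mu}<\frac{\beta}{2}$, i.e.\ it yields uniqueness for $d=r=3$ already under $4\beta\mu>1$.
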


\begin{proof}
Let $(\u^1,\boldsymbol{\chi}^1)$ and $(\u^2,\boldsymbol{\chi}^2),$ where $\boldsymbol{\chi}^k=(\chi^k_1,\ldots,\chi^k_d)$, for $k=1,2$, be two weak solutions of  \eqref{appxode3}, with $\u^1,\u^2\in\mathcal{W}$ and 
$\boldsymbol{\chi}^k\in\partial_{\mathpzc{C}} j(\cdot,\u^k)$ for $k=1,2$ satisfying \eqref{eqn-clarke}, corresponding to the external forcing $\f\in\mathrm{L}^2(0,T;\V^{\prime})$ and the initial data $\u^0\in\H$, respectively. Then, $(\w,\boldsymbol{\chi})=(\u^1-\u^2,\boldsymbol{\chi}^1-\boldsymbol{\chi}^2)$ satisfies the following:
	\begin{equation}\label{eqn-abs-11}
	\left\{
	\begin{aligned}
		&\frac{\d}{\d t}\langle\w,\v\rangle+\langle\mu\A\w,\v\rangle
		+\langle\B(\u^1)-\B(\u^2),\v\rangle+
		\alpha\langle\wi{\C}(\u^1)-\wi{\C}(\u^2),\v\rangle
		\nonumber\\&\quad+\beta\langle\C(\u^1)-\C(\u^2),\v\rangle+
		(\boldsymbol{\chi},\v)=0, \ \text{ for a.e. } \ t\in[0,T],
	\end{aligned}
	\right.
\end{equation}
 for all $\v\in\V\cap\L^{r+1}$. Then, from Proposition \ref{prop-energy}, $\w$ satisfies the following energy equality:
\begin{align}\label{eqn-unique-non}
	&\|\w(t)\|_{\H}^2+2\mu\int_0^t\|\w(s)\|_{\V}^2\d s+
	2\beta\int_0^t\langle\C(\u^1(s))-\C(\u^2(s)),\w(s)\rangle\d s
	 \nonumber\\&=
	\|\w(0)\|_{\H}^2-2\int_0^t\langle\B(\u^1(s))-\B(\u^2(s)),\w(s)\rangle\d s -2\alpha \int_0^t\langle\wi{\C}(\u^1(s))-\wi{\C}(\u^2(s)),\w(s)\rangle\d s \nonumber\\&\quad
	-2\int_0^t({\boldsymbol{\chi}}(s),\w(s))\d s,
\end{align}
for all $t\in[0,T]$. Now, from the assumption \eqref{theta-Cond}, we get 
\begin{align}\label{inequality-1}
\inf_{\xi_1 > \xi_2} 
			\frac{\underline{\theta}(x, t,\xi_1) - \overline{\theta}(x, t, \xi_2)}{\xi_1 - \xi_2}
			\geq -\mathcal{K} 	\quad \text{for all } (x, t) \in \mathfrak{D} \times (0,T).
\end{align}
Let $\Omega_1(s) = \{x \in \mathfrak{D} : \u^1(x, s) > \u^2(x, s)\}$ 
and $\Omega_2(s) = \{x \in \mathfrak{D} : \u^2(x, s) > \u^1(x, s)\}$ for all $s \in[0,T]$. 
Using \eqref{Equality}, the definition of $\widehat{\theta}$ in \eqref{Def-theta} and \eqref{inequality-1}, we have
\begin{align}\label{4.81}
	&(\boldsymbol{\chi}^1(s) - \boldsymbol{\chi}^2(s), \u^1(s) - \u^2(s))\nonumber\\
	&= \int_{\Omega_1(s)} (\boldsymbol{\chi}^1(x, s) - \boldsymbol{\chi}^2(x, s))\cdot(\u^1(x, s) - \u^2(x, s)) \d x \nonumber\\
	&\quad + \int_{\Omega_2(s)} (\boldsymbol{\chi}^1(x, s) - \boldsymbol{\chi}^2(x, s))\cdot(\u^1(x, s) - \u^2(x, s)) \d x \nonumber\\
	&\geq \int_{\Omega_1(s)} 
	\Big[ \underline{\theta}(x, s, \u^1(x, s)) - 
	\overline{\theta}(x, s, \u^2(x, s)) \Big] \cdot
	(\u^1(x, s) - \u^2(x, s)) \d x \nonumber\\
	&\quad + \int_{\Omega_2(s)} 
	\Big[ \underline{\theta}(x, s,  \u^2(x, s)) - 
	\overline{\theta}(x, s, \u^1(x, s)) \Big] \cdot
	(\u^2(x, s) - \u^1(x, s)) \d x \nonumber\\
	&\ge -\mathcal{K}  \int_{\Omega_1(s)} |\u^1(x, s) - \u^2(x, s)|^2 \d x
	- \mathcal{K}  \int_{\Omega_2(s)} |\u^2(x, s) - \u^1(x, s)|^2 \d x \nonumber\\
	&= -\mathcal{K}  \| \u^1(s) - \u^2(s) \|_{\H}^2.
\end{align}
Now, in view of Lemma \ref{Cmono1}, and by an application of Taylor's formula and  H\"older's inequality, we have  the following estimates:
\begin{align}
   \langle\C(\u^1) -\C(\u^2),\w\rangle
	&\geq\frac{\beta}{2} \||\u^1|^{\frac{r-1}{2}}\w\|^2_{\H}
	+ \frac{\beta}{2}\||\u^2|^{\frac{r-1}{2}}\w\|^2_{\H},\label{4.76}\\
	|\alpha| |\langle\wi{\C}(\u^1)-\wi{\C}(\u^2),\w\rangle|
	&\leq \frac{\beta}{4} \||\u^1|^{\frac{r-1}{2}}\w\|^2_{\H}
	+ \frac{\beta}{4} \||\u^2|^{\frac{r-1}{2}}\w\|^2_{\H}
	+(\varrho_1 + \varrho_2) \|\w\|^2_{\H},\label{4.766}
\end{align}
where $\varrho_1= \left( \frac{r - q}{r - 1}\right)
\left( \frac{2^{q+1} q |\alpha|(q - 1)}{\beta (r - 1)} \right)^{\frac{q - 1}{r - q}} \text{ and } \varrho_2= \left( \frac{r - q}{r - 1}\right)
\left( \frac{2^{q+1} q |\alpha|(q - 1)}{\beta (r - 1)} \right)^{\frac{q - 1}{r - q}} $.
On substituting \eqref{4.81}-\eqref{4.766} into \eqref{eqn-unique-non}, we obtain
\begin{align}\label{systunq}
	&\|\w(t)\|_{\H}^2+2\mu\int_0^t\|\w(s)\|_{\V}^2\d s+
	\frac{\beta}{2}\int_0^t\||\u^1(s)|^{\frac{r-1}{2}}\w(s)\|^2_{\H}\d s
	+ \frac{\beta}{2}\int_0^t\||\u^2(s)|^{\frac{r-1}{2}}\w(s)\|^2_{\H}\d s
	\nonumber\\&\leq
	\|\w(0)\|_{\H}^2+2(\varrho_1 +\varrho_2+\mathcal{K})\int_0^t\|\w(s)\|_{\H}^2-
	2\int_0^t\langle\B(\u^1(s))-\B(\u^2(s)),\w(s)\rangle\d s,
\end{align}
for all $t\in[0,T]$. 
Now, we discuss the cases for different values of $r$ written as follows:
\vskip 2mm
\noindent
\textbf{Case I:} \emph{For $r\geq1$ in $d=2$}. By using H\"older's, Ladyzhenskaya's and Young's inequalities, we compute
\begin{align}\label{b1b2df}
|\langle \B(\u^1)-\B(\u^2),\w\rangle|&=|\langle \B(\w, \u^2), \w \rangle|
=|b(\w, \u^2, \w)|
\nonumber\\&\leq\|\w\|_{\wi\L^4}\|\w\|_{\V}\|\u^2\|_{\wi\L^4}
\leq\|\w\|_{\H}^{\frac12}\|\w\|_{\V}^{\frac32}\|\u^2\|_{\wi\L^4}
\nonumber\\&\leq \frac{\mu}{2}\|\w\|_{\V}^2+\frac{27}{32\mu^3}\|\w\|_{\H}^2\|\u^2\|_{\wi\L^4}^4.
\end{align}
Then \eqref{systunq} along with \eqref{b1b2df} yields
\begin{align}\label{systunq1}
	&\|\w(t)\|_{\H}^2+\mu\int_0^t\|\w(s)\|_{\V}^2\d s+
	\frac{\beta}{2}\int_0^t\||\u^1(s)|^{\frac{r-1}{2}}\w(s)\|^2_{\H}\d s
	+ \frac{\beta}{2}\int_0^t\||\u^2(s)|^{\frac{r-1}{2}}\w(s)\|^2_{\H}\d s
	\nonumber\\&\leq
	\|\w(0)\|_{\H}^2+2(\varrho_1+\varrho_2+\mathcal{K}) \int_0^t\|\w(s)\|_{\H}^2+\frac{27}{16\mu^3}
	\int_0^t\|\w(s)\|_{\H}^2\|\u^2(s)\|_{\wi\L^4}^4\d s,
\end{align}
for all $t\in[0,T]$. On employing Gr\"onwall's Lemma into \eqref{systunq1}, one obtains
\begin{align}\label{systunq2}
	&\|\w(t)\|_{\H}^2+\mu\int_0^t\|\w(s)\|_{\V}^2\d s+
\frac{\beta}{2}\int_0^t\||\u^1(s)|^{\frac{r-1}{2}}\w(s)\|^2_{\H}\d s
+ \frac{\beta}{2}\int_0^t\||\u^2(s)|^{\frac{r-1}{2}}\w(s)\|^2_{\H}\d s
\nonumber\\&\leq
\|\w(0)\|_{\H}^2 e^{2(\varrho_1+\varrho_2+\mathcal{K})t}
\exp\bigg(\frac{27}{16\mu^3}\int_0^t \|\u^2(s)\|_{\wi\L^4}^4\d s\bigg),
\end{align}
for all $t\in[0,T]$. Since, $\w(0)=\boldsymbol{0}$ and $\u^2\in\mathcal{W}$, therefore \eqref{systunq2} immediately yields $\u^1(t)=\u^2(t)$ for all $t\in[0,T]$. 
\vskip 2mm
\noindent
\textbf{Case II:} \emph{For $r>3$ in $d=3$}. By using H\"older's and Young's inequalities, we find
\begin{align}\label{b1b2df2}
|\langle \B(\u^1)-\B(\u^2),\w\rangle|=|\langle \B(\w, \u^2), \w \rangle|
\leq\mu\|\w\|_{\V}^2+\frac{1}{4\mu}\|\u^2\w\|_{\H}^2.
\end{align}
We estimate the last term $\|\u^2\w\|_{\H}^2$ by using H\"older's and Young's inequalities with exponent $\frac{r-1}{2}$ and $\frac{r-1}{r-3}$ 
\begin{align}\label{b1b2df3}
	  \|\u^2\w\|_{\H}^2\leq\frac{\mu\beta}{2}\| |\u^2 |^{\frac{r-1}{2}}\w\|_{\H}^2 
	 +4\mu\varrho_3\|\w\|_{\H}^2,
\end{align}
where $\varrho_3= \frac{1}{4 \mu} \left(\frac{r - 3}{r - 1}\right)
\left( \frac{4}{\mu \beta (r - 1)} \right)^{\frac{2}{r -3}}$. On combining \eqref{b1b2df2}-\eqref{b1b2df3} into \eqref{systunq}, we get
\begin{align*}
&\|\w(t)\|_{\H}^2+
\frac{\beta}{2}\int_0^t\||\u^1(s)|^{\frac{r-1}{2}}\w(s)\|^2_{\H}\d s
+\frac{\beta}{4}\int_0^t\||\u^2(s)|^{\frac{r-1}{2}}\w(s)\|^2_{\H}\d s
\nonumber\\&\leq
\|\w(0)\|_{\H}^2+2(\varrho_1 +\varrho_2 + \varrho_3+\mathcal{K}) \int_0^t\|\w(s)\|_{\H}^2\d s, \text{ for all } \ t\in[0,T].
\end{align*}
An application of Gr\"onwall's Lemma immediately implies that $\u^1(t)=\u^2(t)$ for all $t\in[0,T]$. 
\vskip 0.2cm
\noindent
\textbf{Case III:} \emph{For $d=r=3$ with $2 \beta \mu >1$}. 
Modifying the calculations \eqref{4.766}, we compute
\begin{align}\label{Ct2}
|\alpha| |\langle\wi{\C}(\u^1)-\wi{\C}(\u^2),\w\rangle|
&\leq
2^{q-2}q|\alpha|\int_{\mathfrak{D}} (|\u^1(x)|^{q-1}+|\u^2(x)|^{q-1})|\w(x)|^2 \d x
\nonumber\\&\leq
\frac{1}{4\mu}\||\u^1|\w\|_{\H}^2+\frac14\left(\beta-\frac{1}{2\mu}\right)
\||\u^2|\w\|_{\H}^2+(\varrho_4+\varrho_5)\|\w\|_{\H}^2,
\end{align}
where $\varrho_4:=\left(\frac{3-q}{2}\right)\left(2^{q-1}q|\alpha|\mu(q-1)\right)^{\frac{q-1}{3-q}}
$ and 
$\varrho_5:=\left(\frac{3-q}{2}\right)\left(\frac{2^{q-1}q|\alpha|(q-1)}{\left(\beta-\frac{1}{2\mu}\right)}
\right)^{\frac{q-1}{3-q}}$. On substituting \eqref{Ct2} along with \eqref{4.76} (for $r=3$), \eqref{b1b2df2} and \eqref{4.81} into \eqref{eqn-unique-non}, and simplifying, we further obtain
\begin{align*}
	&\|\w(t)\|_{\H}^2+
	\left(\beta-\frac{1}{2\mu}\right)\int_0^t\||\u^1(s)|\w(s)\|^2_{\H}\d s
	+\frac12\left(\beta-\frac{1}{2\mu}\right)\int_0^t\||\u^2(s)|\w(s)\|^2_{\H}\d s
	\nonumber\\&\leq
	\|\w(0)\|_{\H}^2+2(\varrho_4+\varrho_5+\mathcal{K})\int_0^t\|\w(s)\|_{\H}^2,
\end{align*}
for all $t\in[0,T]$. Finally, for $2\beta\mu > 1$, Gr\"onwall's Lemma yields uniqueness and this completes the proof.
\end{proof}
 Now, let us give the example of a zig–zag function which are commonly used for adhesive or contact laws and satisfies  Hypothesis \ref{hyptheta} and the condition \eqref{theta-Cond}. For more information, one can see \cite[Application 8.2.6]{DGDM}.
\begin{example}
Consider the function $\theta_i : \mathfrak{D} \times (0,T) \times \mathbb{R} \to \mathbb{R}$  defined by 
 $$\theta_i(x,t,\xi):=	\beta_i(\xi),$$
where $\beta_i:\mathbb{R}\to \mathbb{R}$ is given by
	\[
	\beta_i(\xi)=
	\begin{cases}
		-3, & \xi<-2 \\[4pt]
		2\xi+1, & -2\leq \xi<-1,\\[4pt]
		-\xi, & -1\leq \xi<0,\\[4pt]
		3\xi, & 0\leq \xi<1,\\[4pt]
		-\xi+4, & 1\leq \xi<2,\\[4pt]
		2, & \xi\geq 2.
	\end{cases}
	\]
Now, let us verify  that the function $\theta_i$ satisfies Hypothesis \ref{hyptheta} and the condition \eqref{theta-Cond}.

\textbf{(i) Local boundedness.}
		Since $\beta_i$ is piecewise linear and constant outside the interval $[-2,2]$, it is bounded on every bounded interval of $\mathbb{R}$. Hence, for every $r>0,$ there exists a constant $c(r)>0$ such that
		$$ |\theta_i(x,t,\xi)|=|\beta_i(\xi)|\leq c(r)$$
		for all $(x,t)\in \mathfrak{D}\times(0,T)$ and $|\xi|\le r$.
		
		\textbf{(ii) Continuity in $(x,t)$.}
 Moreover, the function $\theta_i(x,t,\xi)=\beta_i(\xi)$ is independent of $(x,t)$. Therefore, for every $\xi\in\mathbb{R}$, the mapping
		$$ (x,t)\mapsto \theta_i(x,t,\xi)$$
		is continuous on $\mathfrak{D}\times(0,T)$.
		
\textbf{(iii) Growth condition.}
		Since $\beta_i$ is piecewise linear, there exist constants $C_{1,i},C_{2,i}>0$ such that
		$$
		|\theta_i(x,t,\xi)|=|\beta_i(\xi)|\le C_{1,i}+C_{2,i}|\xi|,
		$$
		for all $(x,t,\xi)\in\mathfrak{D}\times(0,T)\times\mathbb{R}$. Hence the growth condition
		$$
		|\theta_i(x,t,\xi)|\le \bar{\alpha}(x,t)+C_{1,i}+C_{2,i}|\xi|
		$$
		holds with $\bar{\alpha}(x,t)=0$.
		
		\textbf{(iv) Ultimate monotonicity in $\xi$.}
		From the definition of $\beta_i$, we observe that
		$$
		\beta_i(\xi)=-3\le 0 \text{ for } \xi<-2,
		$$
		and
		$$
		\beta_i(\xi)=2\ge 0 \text{ for } \xi\ge 2.
		$$
		Thus there exists $\phi_i=2$ such that
		$$
	 \esssup\limits_{\xi\in(-\infty,-\phi_i]}\theta_i(x,t,\xi)\le0
		\quad \text{and} \quad
		0\le
	 \essinf\limits_{\xi\in[\phi_i,\infty)}\theta_i(x,t,\xi).
		$$
Also, the function satisfies the condition \eqref{theta-Cond} with $\mathcal{K}=1$.
%		Therefore, the graph $(\xi,\theta_i(x,t,\xi))$ is ultimately increasing with respect to $\xi$.
	\end{example}

\medskip\noindent
\textbf{Acknowledgments:} The first author gratefully acknowledges the Ministry of Education, Government of India (Prime Minister Research Fellowship, PMRF ID: 2803609), for financial support to carry out her research work, and second author would like to thank Ministry of Education, Government of India - MHRD for financial assistance.  Support for M. T. Mohan's research received from the National Board of Higher Mathematics (NBHM), Department of Atomic Energy, Government of India (Project No. 02011/13/2025/NBHM(R.P)/R\&D II/1137).

	\medskip\noindent	{\bf  Declarations:} 
	
	\noindent 	{\bf  Ethical Approval:}   Not applicable 
	
	\noindent  {\bf   Competing interests: } The authors declare that they have no competing interests. 
	
		\noindent  {\bf   Author Contributions: } All authors contributed equally.

%	\noindent 	{\bf   Funding: } NBHM, India: No. 02011/13/2025/NBHM(R.P)/R\&D II/1137 (M. T. Mohan).

	\noindent 	{\bf   Availability of data and materials: } Not applicable.


\begin{thebibliography}	{99}	


	
\bibitem{SNAHB} S. N. Antontsev and H. B. de Oliveira, The Navier-Stokes problem modified by an absorption term, Appl. Anal., {\bf 89}(12) (2010), 1805--1825.

 \bibitem{WAMTM} W. Akram and M. T. Mohan, Mixed Finite Element Method for a Hemivariational Inequality of Stationary convective Brinkman-Forchheimer Extended Darcy equations,
Submitted, (2025), \url{https://arxiv.org/abs/2508.02797}. 

\bibitem{WAMTM1} W. Akram and M. T. Mohan, Optimal control of a hemivariational inequality of stationary convective Brinkman-Forchheimer extended Darcy equations with numerical approximation, Comput. Methods Appl. Mech. Engrg., {\bf 452} (2026).


%\bibitem{JCNVT}  J. Cen, Nguyen, V. T. and Vetro, C. and Zeng, S., Weak solutions to the generalized Navier-Stokes equations with mixed boundary conditions and implicit obstacle constraints, Nonlinear Anal. Real World Appl. {\bf 73} (2023), Paper No. 103904, 18 pp.

\bibitem{PCAM}  P. Cherrier and A. Milani, Linear and quasi-linear evolution equations in Hilbert spaces, Grad. Stud. Math., \textbf{135},
American Mathematical Society, Providence, RI, 2012.

\bibitem{KCC} K. C. Chang, 
Variational methods for nondifferentiable functionals and their applications to partial differential equations, J. Math. Anal. Appl., \textbf{80}(1) (1981),  102--129.


\bibitem{ZCQJ} Z. Cai and Q. Jiu, Weak and strong solutions for the incompressible Navier-Stokes equations with damping, J. Math. Anal. Appl., {\bf 343}(2) (2008), 799--809.

\bibitem{VVMI}  V. V. Chepyzhov and M. I. Vishik, Attractors for Equations of Mathematical Physics, Amer. Math. Soc. Colloq. Publ., \textbf{49}, American Mathematical Society, Providence, RI, 2002.

\bibitem{RA} E. DiBenedetto, Real Analysis, Birkh\"auser Boston, Inc., Boston, MA, 2002.

  
\bibitem{LCE} L. C. Evans, Partial Differential Equations, Grad. Stud. Math., \textbf{19},
American Mathematical Society, Providence, RI, 2010.

\bibitem{Fang2016} C. Fang, W. Han, S. Migórski, and M. Sofonea, A class of hemivariational inequalities for nonstationary Navier-Stokes equations, Nonlinear Anal. Real World Appl., {\bf 31} (2016), 257--276. 

\bibitem{FKS} R. Farwig, H. Kozono and H. Sohr,
An $L^q$-approach to Stokes and Navier-Stokes equations in general domains,
Acta Math., \textbf{195} (2005), 21--53.


\bibitem{FHR} C. L. Fefferman, K. W. Hajduk and J. C. Robinson, Simultaneous approximation in Lebesgue and Sobolev norms via eigenspaces, Proc. Lond. Math. Soc. (3), \textbf{125}(4) (2022), 759--777.

 \bibitem{GGP}  G. P. Galdi, An Introduction to the Navier-Stokes Initial-Boundary Value Problem, in Fundamental Directions in Mathematical Fluid Mechanics, 1--70, Adv. Math. Fluid Mech. Birkh\"auser, Basel 2000.

\bibitem{SGMTM} S. Gautam and M. T. Mohan, On the convective Brinkman-Forchheimer equations, Dyn. Partial Differ. Equ., {\bf 22}(3) (2025),  191--233. 


\bibitem{SGKKMTM}  S. Gautam, K. Kinra and M.~T. Mohan, Feedback stabilization of convective Brinkman-Forchheimer extended Darcy equations, Appl. Math. Optim., {\bf 91}(1) (2025), 75.


\bibitem{DGDM} D. Goeleven and D. Motreanu, Variational and Hemivariational Inequalities: Theory, Methods and Applications, Vol. II: Unilateral Problems, Non convex Optim. Appl., {\bf 70}, Kluwer Academic Publishers, Boston, MA, (2003).

\bibitem{KWH} K. W. Hajduk and J. C. Robinson, Energy equality for the 3D critical convective Brinkman-Forchheimer equations, J. Differential Equations, \textbf{263}(11) (2017), 7141--7161.
   
\bibitem{CHL}  C. Heil, Introduction to Real Analysis, Grad. Texts in Math., \textbf{280},
Springer, Cham, 2019. 

\bibitem{WH1} W. Han, Variational-hemivariational inequalities: A brief survey on mathematical theory and numerical analysis, \url{arXiv:2512.10204}.

\bibitem{MHHQLM} W. Han, H. Qiu and L. Mei, On a Stokes hemivariational inequality for incompressible fluid flows with damping, Nonlinear Anal. Real World Appl., {\bf 79} (2024), 104131.

\bibitem{WHYYSZ}  W. Han, Y. Yao and S. Zeng, Well-posedness and numerical analysis of a nonstationary Stokes hemivariational inequality, Math. Mech. Solids {\bf 31} (2026), no.~3, 462--492

\bibitem{JHMMPD} J. Haslinger, M. Miettinen and P. D. Panagiotopoulos, Finite Element Method for Hemivariational Inequalities: Theory, Methods and Applications, Nonconvex Optim. Appl., \textbf{35}, Kluwer Academic Publishers, Dordrecht, Boston, London, 1999.


\bibitem{JSMT} J. Jindal, S. Gautam and M. T. Mohan, Well-posedness of a boundary hemivariational inequality for stationary and non-stationary 2D and 3D convective Brinkman-Forchheimer equations, Submitted, (2025), \url{https://arxiv.org/abs/2508.17093}.

\bibitem{KT2} V. K. Kalantarov and S. Zelik, Smooth attractors for the Brinkman-Forchheimer equations with fast growing nonlinearities, Commun. Pure Appl. Anal., \textbf{11}(5) (2012), 2037--2054.

\bibitem{PKa}  P. Kalita, Convergence of Rothe scheme for hemivariational inequalities of parabolic type, Int. J. Numer. Anal. Model., {\bf 10}(2) (2013), 445--465.

\bibitem{JLEM}  J.-L. Lions and E. Magenes, Non-Homogeneous Boundary Value Problems and Applications: Vol I, Springer-Verlag, New York-Heidelberg, 1972. 	

\bibitem{JLL}  J. L. Lions, Quelques m\'ethodes de r\'esolution des problèmes aux limites non lin\'eaires, Dunod, Paris, 1969.

 
\bibitem{JL} J. Leray, Sur le mouvement d'un liquide visqueux emplissant l'espace, Acta Math., {\bf 63}(1) (1934), 193--248.

\bibitem{SMAO} S. Mig\'orski and A. Ochal, Optimal control of parabolic hemivariational inequalities, J. Global Optim., \textbf{17}(1-4) (2000), 285--300.

\bibitem{SMAON} S. Mig\'orski and A. Ochal, Navier-Stokes problems modeled by evolution hemivariational inequalities, Discrete Contin. Dyn. Syst.,  Dynamical systems and differential equations. Proceedings of the 6th AIMS International Conference, 731--740, 2007. 

\bibitem{SMAOMS}  S. Mig\'orski, A. Ochal and M. Sofonea, Nonlinear Inclusions and Hemivariational Inequalities: Models and Analysis of Contact Problems, Advances in Mechanics and Mathematics, \textbf{26}, Springer, New York, 2013.

 \bibitem{MTT} P. A. Markowich, E. S. Titi and S. Trabelsi, Continuous data assimilation for the three dimensional Brinkman-Forchheimer-extended Darcy model, Nonlinearity, \textbf{29}(4) (2016), 1292--1328. 


\bibitem{MTMS} M. T. Mohan, Well-posedness and asymptotic behavior of stochastic convective Brinkman-Forchheimer equations perturbed by pure jump noise, Stoc PDE: Anal. Comp., {\bf 10}(2) (2022), 614--690.	

\bibitem{MTM} M. T. Mohan, Well-posedness of a  
stationary 2D and 3D convective Brinkman-Forchheimer extended darcy hemivariational inequality, J. Optim. Theory Appl., {\bf 208}, Ar. No. 118, 2026.

\bibitem{PDP} P. D. Panagiotopoulos, Inequality Problems in Mechanics and Applications, Convex and Nonconvex Energy Functions, Birk\"auser Boston, Inc., Boston, MA, 1985.

\bibitem{PDPHI} P. D. Panagiotopoulos, Hemivariational Inequalities, Applications in Mechanics and Engineering, Springer-Verlag, Berlin, 1993.

     
   \bibitem{LSM}  J. C. Robinson and  W. Sadowski, A local smoothness criterion for solutions of the 3D Navier-Stokes equations, Rend. Semin. Mat. Univ. Padova, \textbf{131} (2014), 159--178.
   

  \bibitem{MRXZ}	M. R\"ockner and X. Zhang, Tamed 3D Navier-Stokes equation: existence, uniqueness and regularity, Infin. Dimens. Anal. Quantum Probab. Relat. Top., {\bf 12}(4) (2009), 525--549.
     

\bibitem{HS} H. Sohr, The Navier-Stokes Equations, An Elementary Functional Analytic Approach, Modern Birkh\"auser Classics, Birkh\"auser/Springer Basel AG, Basel, 2001.

\bibitem{JSi}  J. Simon, Compact sets in the space $L^p(0,T;B)$, Ann. Mat. Pura Appl., {\bf 146} (1987) 65--96.
      
   \bibitem{TRo}  T. Roubicek, Nonlinear Partial Differential Equations with Applications, Internat. Ser. Numer. Math., \textbf{153}, Birkh\"auser Verlag, Basel, Boston, Berlin, 2005.
              
      \bibitem{Te} R. Temam, Navier-Stokes Equations: Theory and Numerical Analysis, Third edition, Stud. Math. Appl., \textbf{2}, North-Holland, Amsterdam, 1984.
    
 \bibitem{WWXC}  W. Wang, X. Cheng and W. Han, Analysis and finite element solution of a Navier-Stokes hemivariational inequality for incompressible fluid flows with damping, Nonlinear Anal. Real World Appl., {\bf 87} (2026), 16. 


\bibitem{EZ} E. Zeidler, Nonlinear Functional Analysis and its Applications, II/B: Nonlinear Monotone Operators, Springer-Verlag, 1990.
   
   \bibitem{ZZXW}	Z. Zhang, X. Wu and M. Lu, On the uniqueness of strong solution to the incompressible Navier-Stokes equations with damping, J. Math. Anal. Appl., {\bf 377}(1) (2011), 414--419.
   
 
\end{thebibliography}
\end{document}